\documentclass[12pt]{article}

\usepackage{amsmath}
\usepackage{amssymb}
\usepackage{amsthm}
\usepackage{amsfonts}
\usepackage{latexsym}
\usepackage{cite}
\usepackage{psfrag}
\usepackage{epsfig}
\usepackage{graphicx}
\usepackage{mathrsfs}
\usepackage{url}
\usepackage{color}
\usepackage{cancel}
\usepackage{eufrak}
\usepackage{multirow}

\usepackage[colorlinks=true]{hyperref}
\hypersetup{urlcolor=blue, citecolor=red}

\textwidth160mm \textheight205mm \oddsidemargin0mm

\makeatletter
\@addtoreset{equation}{section}
\makeatother

\newtheorem{thm}{Theorem}[section]
\newtheorem{lem}[thm]{Lemma}
\newtheorem{corollary}[thm]{Corollary}

\theoremstyle{definition}
\newtheorem{definition}[thm]{Definition}
\newtheorem{notation}[thm]{Notation}
\newtheorem{rmk}[thm]{Remark}

\def\CC{\mathcal{C}}
\def\I{\mathcal{I}}
\def\L{\mathcal{L}}
\def\O{\mathcal{O}}
\def\P{\mathcal{P}}
\def\R{\mathcal{R}}
\def\T{\mathcal{T}}

\def\C{\mathscr{C}}
\def\Ll{\mathscr{L}}

\def\N{\mathscr{N}}
\def\U{\mathscr{U}}
\def\Ps{\mathscr{P}}

\def\PG{\mathrm{PG}}
\def\RC{\mathrm{RC}}
\def\RA{\mathrm{RA}}
\def\Tr{\mathrm{T}}
\def\IC{\mathrm{IC}}
\def\IA{\mathrm{IA}}
\def\UG{\mathrm{U\Gamma}}
\def\UnG{\mathrm{Un\Gamma}}
\def\EG{\mathrm{E\Gamma}}
\def\EnG{\mathrm{En\Gamma}}
\def\Ar{\mathrm{A}}
\def\EA{\mathrm{EA}}

\def\MM{\mathbf{M}}
\def\Pf{\mathbf{P}}

\def\F{\mathbb{F}}

\def\Nb{\mathbb{N}}

\def\A{\mathfrak{A}}
\def\Lk{\mathfrak{L}}
\def\Pk{\mathfrak{P}}
\def\pk{\mathfrak{p}}

\def\t{\text}
\def\db{\displaybreak[3]}
\def\dbn{\displaybreak[3]\notag}
\def\nt{\notag}

\begin{document}
\title
{Twisted cubic and plane-line incidence matrix in $\PG(3,q)$
\date{}
}
\maketitle
\begin{center}
{\sc Alexander A. Davydov
\footnote{A.A. Davydov ORCID \url{https://orcid.org/0000-0002-5827-4560}}
}\\
{\sc\small Institute for Information Transmission Problems (Kharkevich institute)}\\
 {\sc\small Russian Academy of Sciences}\\
 {\sc\small Moscow, 127051, Russian Federation}\\\emph {E-mail address:} adav@iitp.ru\medskip\\
 {\sc Stefano Marcugini
 \footnote{S. Marcugini ORCID \url{https://orcid.org/0000-0002-7961-0260}},
 Fernanda Pambianco
 \footnote{F. Pambianco ORCID \url{https://orcid.org/0000-0001-5476-5365}}
 }\\
 {\sc\small Department of  Mathematics  and Computer Science,  Perugia University,}\\
 {\sc\small Perugia, 06123, Italy}\\
 \emph{E-mail address:} \{stefano.marcugini, fernanda.pambianco\}@unipg.it
\end{center}

\textbf{Abstract.}
We consider the structure of the plane-line incidence matrix of the projective space $\PG(3,q)$ with respect to the orbits of planes and lines under the stabilizer group of the twisted cubic.  Structures of submatrices with incidences between a union of line orbits and an orbit of planes are investigated. For the unions consisting of two or three line orbits, the original submatrices are split into new ones, in which the incidences  are also considered.
For each submatrix (apart from the ones corresponding to a special type of lines), the numbers of lines in every plane and planes through every line are obtained.
This corresponds to the numbers of ones in columns and rows of the submatrices.

\textbf{Keywords:} Twisted cubic, Projective space, Incidence matrix

\textbf{Mathematics Subject Classification (2010).} 51E21, 51E22

\section{Introduction}
Let $\F_{q}$ be the Galois field with $q$ elements, $\F_{q}^*=\F_{q}\setminus\{0\}$, $\F_q^+=\F_q\cup\{\infty\}$.
Let $\PG(N,q)$ be the $N$-dimensional projective space over $\F_q$.
 For an introduction to projective spaces over finite fields see \cite{Hirs_PGFF,HirsStor-2001,HirsThas-2015}.

An $n$-arc in  $\PG(N,q)$, with $n\ge N + 1\ge3$, is a
set of $n$ points such that no $N +1$ points belong to
the same hyperplane of $\PG(N,q)$. An $n$-arc is complete if it is not contained in an $(n+1)$-arc, see \cite{BallLavrauw,HirsStor-2001,HirsThas-2015} and the references therein.

In $\PG(N,q)$, $2\le N\le q-2$, a normal rational curve is any $(q+1)$-arc projectively equivalent to the arc
$\{(t^N,t^{N-1},\ldots,t^2,t,1):t\in \F_q\}\cup \{(1,0,\ldots,0)\}$. In $\PG(3,q)$, the normal rational curve is called a  \emph{twisted cubic} \cite{Hirs_PG3q,HirsThas-2015}.

Twisted cubics in $\PG(3,q)$ and its connections with a number of other objects have been widely studied; see  \cite{Hirs_PG3q}, the references therein, and  \cite{BDMP-TwCub,BonPolvTwCub,BrHirsTwCub,CLPolvT_Spr,CasseGlynn82,CasseGlynn84,CosHirsStTwCub,GiulVincTwCub,LunarPolv,HirsStor-2001,HirsThas-2015,ZanZuan2010}.  In \cite{Hirs_PG3q}, the orbits of planes, lines and points in $\PG(3,q)$ under the group $G_q$ of the projectivities fixing the twisted cubic are considered. In \cite{DMP_OrbLine}, the unions of line orbits considered in \cite{Hirs_PG3q} are investigated in detail and split in separate orbits. In \cite{BDMP-TwCub}, the structure of the \emph{point-plane} incidence matrix of $\PG(3,q)$ using orbits under $G_q$ is described.

\emph{In this paper}, we consider the structure of the \emph{plane-line} incidence matrix of $\PG(3,q)$ with respect to $G_q$.
We use the partitions of planes and lines into orbits and unions of orbits under $G_q$ described in \cite{DMP_OrbLine,Hirs_PG3q}. We research the structures of the submatrices of incidences between an orbit of planes and a union of line orbits.
 For the unions consisting of two or three line orbits, the original submatrices are split into new ones, in which the incidences  are also considered.
For each submatrix (apart from the ones corresponding to a special type of lines), the numbers of lines in every plane and planes through every line are obtained.
This corresponds to the numbers of ones in columns and rows of the submatrices.

Many submatrices considered are configurations in the sense of \cite{GroppConfig}, see Definition~\ref{def2_config} in Section \ref{subsec_incid}. Such configurations are useful in several distinct areas, in particular, to construct bipartite graph codes without the so-called 4-cycles, see e.g.  \cite{BargZem,DGMP_BipGraph,HohJust} and the references therein.

The theoretic results hold for $q\ge5$. For $q = 2,3,4$, we describe the incidence
matrices by computer search.

The results obtained increase our knowledge on the structure,  properties, and incidences of planes and lines of $\PG(3,q)$.

The paper is organized as follows. Section \ref{sec_prelimin} contains preliminaries. In Section \ref{sec_mainres}, the main results of this paper are summarized. Some useful relations are given in Section~\ref{sec:useful}.  The numbers of distinct planes in $\PG(3,q)$ through distinct lines and vice versa are obtained in Sections \ref{sec:results_q_ne0}--\ref{sec-split}.

\section{Preliminaries}\label{sec_prelimin}

\subsection{Twisted cubic}\label{subset_twis_cub}
We summarize the results on the twisted cubic of \cite{Hirs_PG3q} useful in this paper.

Let $\Pf(x_0,x_1,x_2,x_3)$ be a point of $\PG(3,q)$ with homogeneous coordinates $x_i\in\F_{q}$.
For $t\in\F_q^+$, let  $P(t)$ be a point such that
\begin{align}\label{eq2:P(t)}
  P(t)=\Pf(t^3,t^2,t,1)\text{ if }t\in\F_q;~~P(\infty)=\Pf(1,0,0,0).
\end{align}

Let $\C\subset\PG(3,q)$ be the \emph{twisted cubic} consisting of $q+1$ points $P_1,\ldots,P_{q+1}$  no four of which are coplanar.
We consider $\C$ in the canonical form
\begin{align}\label{eq2_cubic}
&\C=\{P_1,P_2,\ldots,P_{q+1}\}=\{P(t)\,|\,t\in\F_q^+\}.
\end{align}

A \emph{chord} of $\C$ is a line through a pair of real points of $\C$ or a pair of complex conjugate points. In the last  case it is an \emph{imaginary chord}. If the real points are distinct, it is a \emph{real chord}; if they coincide with each other, it is a \emph{tangent.}

Let $\boldsymbol{\pi}(c_0,c_1,c_2,c_3)$ $\subset\PG(3,q)$, be the plane with equation
\begin{align}\label{eq2_plane}
  c_0x_0+c_1x_1+c_2x_2+c_3x_3=0,~c_i\in\F_q.
\end{align}
The \emph{osculating plane} in the  point $P(t)\in\C$ is as follows:
\begin{align}\label{eq2_osc_plane}
&\pi_\t{osc}(t)=\boldsymbol{\pi}(1,-3t,3t^2,-t^3)\t{ if }t\in\F_q; ~\pi_\t{osc}(\infty)=\boldsymbol{\pi}(0,0,0,1).
\end{align}
 The $q+1$ osculating planes form the osculating developable $\Gamma$ to $\C$, that is a \emph{pencil of planes} for $q\equiv0\pmod3$ or a \emph{cubic developable} for $q\not\equiv0\pmod3$.

 An \emph{axis} of $\Gamma$ is a line of $\PG(3,q)$ which is the intersection of a pair of real planes or complex conjugate planes of $\Gamma$. In the last  case it is a \emph{generator} or an \emph{imaginary axis}. If the real planes are distinct it is a \emph{real axis}; if they coincide with each other it is a \emph{tangent} to $\C$.

For $q\not\equiv0\pmod3$, the null polarity $\A$ \cite[Sections 2.1.5, 5.3]{Hirs_PGFF}, \cite[Theorem~21.1.2]{Hirs_PG3q} is given by
\begin{align}\label{eq2_null_pol}
&\Pf(x_0,x_1,x_2,x_3)\A=\boldsymbol{\pi}(x_3,-3x_2,3x_1,-x_0).
\end{align}

\begin{notation}\label{notation_1}
In future, we consider $q\equiv\xi\pmod3$ with $\xi\in\{-1,0,1\}$. Many values depend of $\xi$ or have sense only for specific $\xi$.
We note this by remarks or by superscripts ``$(\xi)$''.
 If a value is the same for all $q$ or a property holds for all $q$, or it is not relevant, or it is clear by the context, the remarks and superscripts ``$(\xi)$'' are not used. If a value is the same for $\xi=-1,1$, then one may use ``$\ne0$''. In superscripts, instead of ``$\bullet$'', one can write ``$\mathrm{od}$'' for odd $q$ or ``$\mathrm{ev}$'' for even $q$. If a value is the same for odd and even $q$, one may omit ``$\bullet$''.

The following notation is used.
\begin{align*}
  &G_q && \t{the group of projectivities in } \PG(3,q) \t{ fixing }\C;\db  \\
  &\mathbf{Z}_n&&\t{cyclic group of order }n;\db  \\
  &\mathbf{S}_n&&\t{symmetric group of degree }n;\db  \\
&A^{tr}&&\t{the transposed matrix of }A;\db \\
&\#S&&\t{the cardinality of a set }S;\db\\
&\overline{AB}&&\t{the line through the points $A$ and }B.\db\\
&&&\t{\textbf{Types $\pi$ of planes:}}\db\\
&\Gamma\t{-plane}  &&\t{an osculating plane of }\Gamma;\db \\
&d_\C\t{-plane}&&\t{a plane containing \emph{exactly} $d$ distinct points of }\C,~d=0,2,3;\db \\
&\overline{1_\C}\t{-plane}&&\t{a plane not in $\Gamma$ containing \emph{exactly} 1 point of }\C;\db \\
&\Pk&&\t{the list of possible types $\pi$ of planes},~\Pk\triangleq\{\Gamma,2_\C,3_\C,\overline{1_\C},0_\C\};\db\\
&\pi\t{-plane}&&\t{a plane of the type }\pi\in\Pk; \db\\
&\N_\pi&&\t{the orbit of $\pi$-planes under }G_q,~\pi\in\Pk.\db\\
&&&\t{\textbf{Types $\lambda$ of lines with respect to cubic $\C$:}}\db\\
&\RC\t{-line}&&\t{a real chord  of $\C$;}\db \\
&\RA\t{-line}&&\t{a real axis of $\Gamma$ for }\xi\ne0;\db \\
&\Tr\t{-line}&&\t{a tangent to $\C$};\db \\
&\IC\t{-line}&&\t{an imaginary chord  of $\C$;}\db \\
&\IA\t{-line}&&\t{an imaginary axis of $\Gamma$ for }\xi\ne0;\db \\
&\UG&&\t{a non-tangent unisecant in a $\Gamma$-plane;}\db \\
&\t{Un$\Gamma$-line}&&\t{a unisecant not in a $\Gamma$-plane (it is always non-tangent);}\db \\
&\t{E$\Gamma$-line}&&\t{an external line in a $\Gamma$-plane (it cannot be a chord);}\db \\
&\t{En$\Gamma$-line}&&\t{an external line, other than a chord, not in a $\Gamma$-plane;}\db \\
&\Ar\t{-line}&&\t{the axis of $\Gamma$ for }\xi=0\db\\
&&&\t{(it is the single line of intersection of all the $q+1~\Gamma$-planes)};\db \\
&\EA\t{-line}&&\t{an external line meeting the axis of $\Gamma$ for }\xi=0;\db\\
&\Lk^{(\xi)}&&\t{the list of possible types $\lambda$ of lines},\db\\
&&&\Lk^{(\ne0)}\triangleq\{\RC,\RA,\Tr,\IC,\IA,\UG,\UnG,\EG,\EnG\}\t{ for }\xi\ne0,\db\\
&&&\Lk^{(0)}\triangleq\{\RC,\Tr,\IC,\UG,\UnG, \EnG,\Ar,\EA\}\t{ for }\xi=0;\db\\
&\lambda\t{-line}&&\t{a line of the type }\lambda\in\Lk^{(\xi)};\db\\
&L_\Sigma^{(\xi)}&&\t{the total number of orbits of lines in }PG(3,q);\db\\
&L_{\lambda\Sigma}^{(\xi)\bullet}&&\t{the total number of orbits of $\lambda$-lines},~\lambda\in\Lk^{(\xi)};\db\\
&\O_\lambda&&\t{the union (class) of all $L_{\lambda\Sigma}^{(\xi)\bullet}$ orbits of $\lambda$-lines under }G_q,~\lambda\in\Lk^{(\xi)}.
\end{align*}
\end{notation}

The following theorem summarizes results from \cite{Hirs_PG3q} useful in this paper.
\begin{thm}\label{th2_Hirs}
\emph{\cite[Chapter 21]{Hirs_PG3q}} The following properties of the twisted cubic $\C$ of \eqref{eq2_cubic} hold:
  \begin{align}
  &\textbf{\emph{(i)}} \t{ The group $G_q$ acts triply transitively on $\C$. Also,}\dbn\\
  &\t{\textbf{\emph{(a)}}}~~ G_q\cong PGL(2,q)~\t{ for }q\ge5;\dbn \\
 &\phantom{\t{\textbf{\emph{(a)}}}~~} G_4\cong\mathbf{S}_5\cong P\Gamma L(2,4)\cong\mathbf{Z}_2PGL(2,4),~\#G_4=2\cdot\#PGL(2,4)=120;\dbn \\
 &\phantom{\t{\textbf{\emph{(a)}}}~~} G_3\cong\mathbf{S}_4\mathbf{Z}_2^3,\hspace{4.2cm}\#G_3=8\cdot\#PGL(2,3)=192;\dbn \\
 &\phantom{\t{\textbf{\emph{(a)}}}~~} G_2\cong\mathbf{S}_3\mathbf{Z}_2^3,\hspace{4.2cm}\#G_2=8\cdot\#PGL(2,2)=48.\dbn\\
&\textbf{\emph{(b)}} \t{ The matrix $\MM$ corresponding to a projectivity of $G_q$ has the general form}\dbn\\
& \label{eq2_M} \mathbf{M}=\left[
 \begin{array}{cccc}
 a^3&a^2c&ac^2&c^3\\
 3a^2b&a^2d+2abc&bc^2+2acd&3c^2d\\
 3ab^2&b^2c+2abd&ad^2+2bcd&3cd^2\\
 b^3&b^2d&bd^2&d^3
 \end{array}
  \right],~a,b,c,d\in\F_q,\db\\
 & ad-bc\ne0.\nt
\end{align}

\textbf{\emph{(ii)}} Under $G_q$, $q\ge5$, there are the following five orbits $\N_\pi$ of planes:
\begin{align}\label{eq2_plane orbit_gen}
   &\N_1=\N_\Gamma=\{\Gamma\t{-planes}\},~~~~\#\N_\Gamma=q+1;\db\\
   &\N_{2}=\N_{2_\C}=\{2_\C\t{-planes}\}, ~\#\N_{2_\C}=q^2+q;\dbn \\
 &\N_{3}=\N_{3_\C}=\{3_\C\t{-planes}\},~  \#\N_{3_\C}=(q^3-q)/6;\dbn\\
 &\N_{4}=\N_{\overline{1_\C}}=\{\overline{1_\C}\t{-planes}\},~\#\N_{\overline{1_\C}}=(q^3-q)/2;\dbn\\
 &\N_{5}=\N_{0_\C}=\{0_\C\t{-planes}\},~\#\N_{0_\C}=(q^3-q)/3.\nt
 \end{align}

 \textbf{\emph{(iii)}} For $q\not\equiv0\pmod3$, the null polarity $\A$ \eqref{eq2_null_pol} interchanges $\C$ and $\Gamma$ and their corresponding chords and axes.

 \textbf{\emph{(iv)}} The lines of $\PG(3,q)$ can be partitioned into classes called $\O_i$ and $\O'_i$, each of which is a union of orbits under $G_q$.
  \begin{align}
  &\hspace{0.6cm}\textbf{\emph{(a)}}~ q\not\equiv0\pmod3,~ q\ge5, ~\O'_i=\O_i\A,~ \#\O'_i=\#\O_i,~i=1,\ldots,6.\dbn\\
  &\O_1=\O_\RC=\{\RC\t{-lines}\},~\O'_1=\O_\RA=\{\RA\t{-lines}\},\db\label{eq2_classes line q!=0mod3}\\
  &\#\O_\RC=\#\O_\RA=(q^2+q)/2;\dbn\\
  &\O_2=\O'_2=\O_\Tr=\{\Tr\t{-lines}\},~\#\O_\Tr=q+1;\dbn \\
  &\O_3=\O_\IC=\{\IC\t{-lines}\},~\O'_3=\O_\IA=\{\IA\t{-lines}\},~\#\O_\IC=\#\O_\IA=(q^2-q)/2;\dbn\\
  &\O_4=\O'_4=\O_\UG=\{\UG\t{-lines}\},~\#\O_\UG=q^2+q;\dbn\\
  &\O_5=\O_\UnG=\{\UnG\t{-lines}\},\O'_5=\O_\EG=\{\EG\t{-lines}\},\#\O_\UnG=\#\O_\EG=q^3-q;\dbn\\
  &\O_6=\O'_6=\O_\EnG=\{\EnG\t{-lines}\},~\#\O_\EnG=(q^2-q)(q^2-1).\nt
     \end{align}
  For $q>4$ even, the lines in the regulus complementary to that of the tangents form an orbit of size $q+1$ contained in $\O_4=\O_\UG$.
  \begin{align}
  &\textbf{\emph{(b)}}~q\equiv0\pmod3,~q>3.\dbn\\
  &\t{Classes }\O_1,\ldots,\O_6\t{ are as in \eqref{eq2_classes line q!=0mod3}};~\O_7=\O_\Ar=\{\Ar\t{-line}\},~\#\O_\Ar=1;\label{eq2_classes line q=0mod3}\\
  &\O_8=\O_\EA=\{\EA\t{-lines}\},~\#\O_\EA=(q+1)(q^2-1). \nt
     \end{align}

 \textbf{\emph{(v)}} The following properties of chords and axes hold.

 \textbf{\emph{(a)}}  For all $q$, no two chords of $\C$ meet off $\C$.

 \phantom{\textbf{\emph{(a)}}} Every point off $\C$ lies on exactly one chord of $\C$.

 \textbf{\emph{(b)}}       Let $q\not\equiv0\pmod3$.

 \phantom{\textbf{\emph{(b)}}}  No two axes of $\Gamma$ meet unless they lie in the same plane of $\Gamma$.

 \phantom{\textbf{\emph{(b)}}}  Every plane not in $\Gamma$ contains exactly one axis of $\Gamma$.

  \textbf{\emph{(vi)}} For $q>2$, the unisecants of $\C$ such that every plane through such a unisecant meets $\C$ in at most one point other than the point of contact are, for $q$ odd, the tangents, while for $q$ even, the tangents and the unisecants in the complementary regulus.
\end{thm}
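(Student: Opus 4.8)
The plan is to fix the point of contact: by the triple transitivity of $G_q$ on $\C$ (Theorem~\ref{th2_Hirs}(i)) it suffices to treat the unisecants through $P(0)=\Pf(0,0,0,1)$. Each line through $P(0)$ is $\overline{P(0)R}$ with $R=\Pf(a,b,c,0)$, so these lines are parametrized by the points $(a:b:c)$ of a residual plane $\cong\PG(2,q)$, and substituting $P(t)=\Pf(t^3,t^2,t,1)$ shows that $\overline{P(0)R}$ meets $\C$ a second time precisely when $(a:b:c)$ lies on the conic $\mathcal{K}:b^2=ac$, the tangent $\Tr$ at $P(0)$ being the point $(0:0:1)\in\mathcal{K}$. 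Thus the non-tangent unisecants through $P(0)$ correspond to the points off $\mathcal{K}$. The tangents can be settled at once for every $q$ by Theorem~\ref{th2_Hirs}(v)(a): a plane through $\Tr$ meeting $\C$ in two points besides $P(0)$ would contain the real chord joining them, which, being coplanar with $\Tr$, would meet $\Tr$ in a point $Q\ne P(0)$ off $\C$ lying on two distinct chords, contradicting the uniqueness of the chord through $Q$. Hence every tangent has the required property.

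For a non-tangent unisecant $\ell\leftrightarrow(a:b:c)\notin\mathcal{K}$ I would write the coplanarity of $\ell$ and the chord $\overline{P(t_1)P(t_2)}$ (with $t_1\ne t_2$, both nonzero) as the vanishing of a $4\times4$ determinant; it factors to the single linear relation $c\,t_1t_2-b(t_1+t_2)+a=0$. Encoding a chord by the binary quadratic $\alpha T^2+\beta T+\gamma$ whose roots are $t_1,t_2$, this says that the chords meeting $\ell$ away from $P(0)$ are exactly the points with $\gamma\ne0$ of the line $\ell^\ast:a\alpha+b\beta+c\gamma=0$ in the dual plane of chords, a real secant being a quadratic that splits over $\F_q$. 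The property for $\ell$ thus reduces to the statement that no such real secant lies on $\ell^\ast$.

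For $q$ odd the splitting is read off the discriminant $\Delta=\beta^2-4\alpha\gamma$, whose zero locus is the conic $\mathcal{K}^\ast$ of tangent-chords. I would verify that the dual conic of $\mathcal{K}^\ast$ is again $\mathcal{K}$, so that for a unisecant the line $\ell^\ast$ is \emph{not} tangent to $\mathcal{K}^\ast$ and $\Delta$ restricts to a nondegenerate binary quadratic form on $\ell^\ast$. Such a form represents every nonzero element of $\F_q$ (surjectivity of the norm), in particular a nonzero square; since only one point of $\ell^\ast$ has $\gamma=0$, for $q\ge5$ this nonzero square occurs at a genuine real secant off $P(0)$, and the property fails. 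Hence for $q$ odd only the tangents survive.

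For $q$ even the discriminant collapses, and one must use instead that $c_0t^2+c_1t+c_2$ has two distinct roots iff $c_1\ne0$ and $\mathrm{tr}(c_0c_2/c_1^2)=0$, where $\mathrm{tr}$ is the $\F_q/\F_2$-trace. Running this through the pencil $\boldsymbol{\pi}(c_0,c_1,c_2,0)$ with $ac_0+bc_1+cc_2=0$, the plane with $c_0=0$ already forces $b=0$ or $c=0$, after which the balancedness of $\mathrm{tr}$ eliminates every remaining case except $(a:b:c)=(0:1:0)$. This surviving line lies in the osculating plane $\pi_{\mathrm{osc}}(0)=\boldsymbol{\pi}(1,0,0,0)$, hence is a $\UG$-line; checking it directly, $c_1=0$ is forced along its pencil, so every plane through it meets $\C$ with a repeated contact and contributes at most one further point. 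Collecting over all points of $\C$ then yields exactly the $q+1$ tangents together with the $q+1$ lines of the regulus complementary to the tangents of Theorem~\ref{th2_Hirs}(iv)(a). I expect the characteristic-two analysis to be the main obstacle, since the discriminant criterion is unavailable and the additive trace condition must be tracked through the entire pencil so as to isolate the single extra surviving unisecant per point; a secondary technical point is the conic duality used in the odd case, and the small cases $q=3,4$ are to be treated by computer as elsewhere in the paper.
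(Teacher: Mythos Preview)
The paper does not supply a proof of Theorem~\ref{th2_Hirs}: the theorem is quoted from \cite[Chapter~21]{Hirs_PG3q} as background material and used without argument throughout the paper. There is therefore no proof in the paper to compare your attempt against.

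That said, two remarks on your proposal are in order. First, you address only part~(vi); parts (i)--(v) are simply assumed (indeed you invoke (i) and (v)(a) in your argument), so if the intent was to establish the full statement this is a substantial gap. Second, your strategy for (vi) is sound and is essentially the classical one: fix $P(0)$ by transitivity, identify unisecants through $P(0)$ with points $(a:b:c)$ off the conic $b^2=ac$, translate ``some plane through $\ell$ is a $3_\C$-plane'' into ``the dual line $a\alpha+b\beta+c\gamma=0$ in the $\PG(2,q)$ of binary quadratics carries a split form with $\gamma\ne0$,'' and then separate the odd and even cases via the discriminant and the Artin--Schreier trace respectively. The conic-duality step (that $\ell^\ast$ is non-tangent to $\beta^2-4\alpha\gamma=0$ exactly when $(a:b:c)\notin\mathcal{K}$) is correct. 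Two places deserve a little more care than you give them: in the odd case you should also dispose of the degenerate point $\alpha=0$ on $\ell^\ast$ (which corresponds to chords through $P(\infty)$) before appealing to surjectivity of the norm; and in the even case the phrase ``balancedness of $\mathrm{tr}$ eliminates every remaining case'' hides a genuine case split (one must check separately the subcases $b=0$, $c=0$, and within $c=0$ further split on $a=0$ versus $a\ne0$, the point being that only when $a=c=0$ does the pencil force $c_1\equiv0$ so that the trace condition is never met). These are routine verifications, and once filled in your argument for (vi) is complete for $q\ge5$; as you note, $q=3,4$ are handled by machine elsewhere in the paper.
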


The following theorem summarizes results from \cite{DMP_OrbLine} useful in this paper.
\begin{thm}\label{th2:DMP_Orb}
For line orbits under $G_q$ the following holds.
\begin{description}
  \item[(i)] The following classes of lines consist of a single orbit:\\
   $\O_1=\O_\RC=\{\RC\t{-lines}\}$,
  $\O_2=\O_\Tr=\{\Tr\t{-lines}\}$, and\\
   $\O_3=\O_\IC=\{\IC\t{-lines}\}$,  for all~$q$;\\
   $\O_4=\O_\UG=\{\UG\t{-lines}\}$, for odd $q$;\\
    $\O_5=\O_\UnG=\{\UnG\t{-lines}\}$ and $\O'_5=\O_\EG=\{\EG\t{-lines}\}$, for even $q$;\\
     $\O_1'=\O_\RA=\{\RA\t{-lines}\}$ and $\O_3'=\O_\IA=\{\IA\t{-lines}\}$,
  for $\xi\ne0$;\\
   $\O_7=\O_\Ar=\{\Ar\t{-lines}\}$, for $\xi=0$.

  \item[(ii)]
Let $q\ge8$ be even. The non-tangent unisecants in a $\Gamma$-plane \emph{(}i.e. $\UG$-lines, class $\O_4=\O_\UG$\emph{)} form two orbits of size $q+1$ and $q^2-1$. The orbit of size $q+1$  consists of the lines in the regulus complementary to that of the tangents. Also, the $(q+1)$-orbit and $(q^2-1)$-orbit can be represented in the form $\{\ell_1\varphi|\varphi\in G_q\}$ and $\{\ell_2\varphi|\varphi\in G_q\}$, respectively, where $\ell_j$ is a line such that  $\ell_1=\overline{P_0\Pf(0,1,0,0)}$, $\ell_2=\overline{P_0\Pf(0,1,1,0)}$, $P_0=\Pf(0,0,0,1)\in\C$.

  \item[(iii)] Let $q\ge5$ be odd.
 The non-tangent unisecants not in a $\Gamma$-plane \emph{(}i.e. $\UnG$-lines, class $\O_5=\O_\UnG$\emph{)} form  two orbits of size $\frac{1}{2}(q^3-q)$. These orbits can be represented in the form $\{\ell_j\varphi|\varphi\in G_q\}$, $j=1,2$, where $\ell_j$ is a line such that $\ell_1=\overline{P_0\Pf(1,0,1,0)}$,  $\ell_2=\overline{P_0\Pf(1,0,\rho,0)}$, $P_0=\Pf(0,0,0,1)\in\C$, $\rho$ is not a square.

   \item[(iv)] \looseness -1
Let $q\ge5$ be odd. Let $q\not\equiv0\pmod 3$. The external lines in a $\Gamma$-plane   \emph{(}class $\O_5'=\O_\EG$\emph{)} form two orbits of size $(q^3-q)/2$. These orbits can be represented in the form $\{\ell_j\varphi|\varphi\in G_q\}$, $j=1,2$, where $\ell_j=\pk_0\cap\pk_j$ is the intersection line of planes $\pk_0$ and $\pk_j$ such that
           $\pk_0=\boldsymbol{\pi}(1,0,0,0)=\pi_\t{\emph{osc}}(0)$, $\pk_1=\boldsymbol{\pi}(0,-3,0,-1)$,  $\pk_2=\boldsymbol{\pi}(0,-3\rho,0,-1)$, $\rho$ is not a square, cf. \eqref{eq2_plane}, \eqref{eq2_osc_plane}.

   \item[(v)]
Let $q\equiv0\pmod 3,\; q\ge9$. The external lines meeting the axis of $\Gamma$ \emph{(}i.e. $\EA$-lines, class $\O_8=\O_\EA$\emph{)} form three orbits of size $q^3-q$, $(q^2-1)/2$, $(q^2-1)/2$. The $(q^3-q)$-orbit and the two $(q^2-1)/2$-orbits can be represented in the form $\{\ell_1\varphi|\varphi\in G_q\}$ and $\{\ell_j\varphi|\varphi\in G_q\}$, $j=2,3$, respectively, where $\ell_j$ are lines such that $\ell_1=\overline{P_0^\Ar\Pf(0,0,1,1)}$,  $\ell_2=\overline{P_0^\Ar\Pf(1,0,1,0)}$, $\ell_3=\overline{P_0^\Ar\Pf(1,0,\rho,0)}$, $P_0^\Ar=\Pf(0,1,0,0)$, $\rho$ is not a square.
\end{description}
\end{thm}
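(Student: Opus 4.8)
The plan is to work throughout with the isomorphism $G_q\cong PGL(2,q)$ for $q\ge5$ from Theorem~\ref{th2_Hirs}(i), so that $\#G_q=q^3-q$, together with the explicit parametrization \eqref{eq2_M} of its elements by $(a,b,c,d)$ with $ad-bc\ne0$. Via the points $P(t)$, the action of $G_q$ on $\C$ is the standard triply transitive action of $PGL(2,q)$ on the parameter line $\F_q^+=\PG(1,q)$ by $t\mapsto(at+b)/(ct+d)$, and a projectivity $\varphi$ carries a line $\ell$ to $\ell\MM$ on the Grassmannian. Every assertion then becomes an orbit--stabilizer computation: the orbit $\{\ell\varphi\mid\varphi\in G_q\}$ has size $(q^3-q)/\#\mathrm{Stab}_{G_q}(\ell)$, and I would determine each stabilizer by imposing $\ell\MM=\ell$ on the entries of \eqref{eq2_M}.

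For part (i) the single-orbit statements follow from transitivity. A $\RC$-line is $\overline{P(t_1)P(t_2)}$ with $t_1\ne t_2$ in $\F_q$, a $\Tr$-line is the tangent at one $P(t)$, and an $\IC$-line is spanned by a conjugate pair in $\PG(1,q^2)\setminus\PG(1,q)$; since $PGL(2,q)$ is transitive on ordered pairs of distinct points, on single points, and on conjugate pairs of the quadratic extension, the classes $\O_\RC,\O_\Tr,\O_\IC$ are single orbits, and comparing $\#G_q$ with the class sizes in Theorem~\ref{th2_Hirs}(iv) fixes the stabilizer orders. For $\xi\ne0$ the null polarity $\A$ of \eqref{eq2_null_pol} is associated canonically to $\C$, hence is fixed by $G_q$ and normalizes it, so $\ell\mapsto\ell\A$ maps $G_q$-orbits bijectively to $G_q$-orbits; by Theorem~\ref{th2_Hirs}(iii) it sends $\O_\RC$ to $\O_\RA$ and $\O_\IC$ to $\O_\IA$, which are therefore single orbits as well. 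The remaining cases ($\O_\UG$ for odd $q$, $\O_\UnG$ and $\O_\EG$ for even $q$, and $\O_\Ar$ for $\xi=0$, the last being a single line) I would settle directly, exhibiting one representative and checking that the order of its stabilizer equals $(q^3-q)/\#\O_\lambda$.

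For the splittings in parts (ii)--(v) I would take the explicit representatives $\ell_j$ listed in the statement. Each such $\ell_j$ determines a canonical fixed object --- the unique point of $\C$ on a unisecant, the unique $\Gamma$-plane containing an $\EG$-line (an $\EG$-line, not being an axis, lies in only one plane of $\Gamma$), or the point where an $\EA$-line meets the globally fixed axis $\Ar$ --- so every $\varphi$ stabilizing $\ell_j$ fixes an associated parameter $t\in\F_q^+$ and hence lies in the corresponding point-stabilizer, an affine-type subgroup of order $q(q-1)$ acting on the pencil of admissible lines through that fixed point. Imposing the residual condition $\ell_j\MM=\ell_j$ inside this pencil yields $\#\mathrm{Stab}_{G_q}(\ell_j)$, and thus the orbit sizes $q+1,\,q^2-1$ in (ii), $(q^3-q)/2$ in (iii) and (iv), and $q^3-q,\,(q^2-1)/2,\,(q^2-1)/2$ in (v); in each case these sizes sum to $\#\O_\lambda$, so the listed orbits exhaust the class. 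To see that distinct $\ell_j$ really lie in distinct orbits I would produce a $G_q$-invariant separating them: for odd $q$ the point-stabilizer scales the relevant direction (or plane) coordinate by a \emph{square}, so the quadratic character --- witnessed by the $1$ versus the non-square $\rho$ in the representatives of (iii),(iv),(v) --- is an orbit invariant, while in (ii) the distinguishing feature is membership in the regulus complementary to that of the tangents, which is $G_q$-invariant by Theorem~\ref{th2_Hirs}(iv),(vi).

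The stabilizer bookkeeping is routine once it is reduced to the point-stabilizer, so I expect the genuine obstacle to be the \emph{construction and invariance} of the separating functionals. One must verify that the scalar acting on the discriminant-type coordinate is a square along the \emph{entire} point-stabilizer for odd $q$ (otherwise the two representatives could be conjugated to each other), that this functional is well defined on the Grassmannian, and that it is preserved by every matrix of the form \eqref{eq2_M}; and one must supply the correct additive/regulus-theoretic substitute for even $q$ in (ii) and for the characteristic-$3$ case of the $\EA$-lines in (v). Carrying this out uniformly, while simultaneously excluding the small exceptional groups $G_2,G_3,G_4$ through the hypotheses $q\ge5,8,9$, is where the real work lies.
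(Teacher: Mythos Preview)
The paper does not actually prove this theorem: it is quoted verbatim as a summary of results from \cite{DMP_OrbLine} (the sentence preceding the statement reads ``The following theorem summarizes results from \cite{DMP_OrbLine} useful in this paper''), and no argument is supplied here. So there is no proof in the present paper to compare against; the result is imported wholesale and then \emph{used} in Sections~\ref{sec:useful}--\ref{sec-split}.

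That said, your plan is the natural one and is, in broad strokes, the same programme carried out in \cite{DMP_OrbLine}: reduce each line to a canonical representative by first fixing the associated datum ($P(t)$ for a unisecant, the $\Gamma$-plane for an $\EG$-line, the axis-point for an $\EA$-line), then compute the residual stabilizer inside the corresponding point-stabilizer of $PGL(2,q)$, and finally separate the resulting orbits by a square/non-square invariant (or, for even $q$, by membership in the complementary regulus). Your use of the null polarity $\A$ to transport the $\RC$/$\IC$ results to $\RA$/$\IA$ is exactly right and is also how the paper itself argues elsewhere (cf.\ the proof of Theorem~\ref{th6:two_orb_EG}, which passes from $\UnG$ to $\EG$ via $\A$).

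One point to tighten: you assert that the affine point-stabilizer ``scales the relevant direction coordinate by a \emph{square}'', which is the crux for (iii)--(v). This is true but it is precisely what must be checked from the explicit form \eqref{eq2_M}: the unipotent part acts trivially on the relevant quotient while the torus $t\mapsto a^2t$ (taking $b=c=0$, $d=1$) acts by $a^2$, and one needs to verify that no extra element of the full stabilizer contributes a non-square factor. Likewise in (v) you will need to distinguish the $(q^3-q)$-orbit from the two small ones by a \emph{different} invariant (the small orbits consist of lines through the axis lying in no $\Gamma$-plane versus the generic $\EA$-lines lying in exactly one), before the square/non-square criterion separates $\ell_2$ from $\ell_3$. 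None of this is hard, but your write-up flags these as ``routine bookkeeping'' while correctly identifying the invariance verification as the substantive step; that emphasis is accurate.
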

\subsection{The plane-line incidence matrix  of $\PG(3,q)$}\label{subsec_incid}
The space $\PG(N,q)$ contains $\theta_{N,q}$ points and hyperplanes, and $\beta_{N,q}$ lines;
\begin{align}\label{eq1_theta_lambda}
 \theta_{N,q}=\frac{q^{N+1}-1}{q-1} ,~\beta_{N,q}=\frac{(q^{N+1}-1)(q^{N+1}-q)}{(q^2-1)(q^2-q)}\,.
\end{align}

Let  $\I^{\Pi\Lambda}$ be the $\beta_{3,q}\times\theta_{3,q}$ plane-line incidence matrix  of $\PG(3,q)$ in which columns correspond to planes, rows correspond to lines, and there is an entry  ``1'' if the corresponding line lies in the corresponding plane.  Every column and every row of $\I^{\Pi\Lambda}$ contains $\theta_{2,q}$ and $\theta_{1,q}$ ones, respectively. Thus, $\I^{\Pi\Lambda}$ is a tactical configuration \cite[Chapter 2.3]{Hirs_PGFF}, \cite[Chapter 7, Section 2]{Lidl_Nied}.
Moreover, $\I^{\Pi\Lambda}$  gives a 2-$(\theta_{3,q},\theta_{1,q},1)$ design~\cite{HandbCombDes2v_k_lamb} since there is exactly one line as the intersection of any two planes.

\begin{definition}\label{def2_config}\cite{GroppConfig}
    A configuration $(v_r,b_k)$  is an incidence structure of $v$ points and $b$ lines such that
 each line contains $k$ points, each point lies on $r$ lines, and
 two different points are connected by at most one line. If $v = b$ and, hence, $r = k$, the configuration is symmetric, denoted by $v_k$.
\end{definition}
\noindent For an introduction to the configurations see \cite{DFGMP_SymConf,GroppConfig} and the references therein.

The transposition $(\I^{\Pi\Lambda})^{tr}$ gives the $\theta_{3,q}\times\beta_{3,q}$ line-plane incidence matrix. It can be viewed as a $(v_r,b_k)$ configuration  with $v=\beta_{3,q}$, $b=\theta_{3,q}$, $r=\theta_{1,q}$, $k=\theta_{2,q}$, as there is at most one plane through two different lines.

\section{The main results}\label{sec_mainres}
Throughout the paper, we consider orbits of lines and planes under $G_q$.
\begin{notation}\label{notation_2}
In addition to Notation \ref{notation_1}, the following notation is used:
\begin{align*}
&\O_{\lambda_j}&&\t{the $j$-th orbit of the class }\O_\lambda,~ j=1,\ldots,L_{\lambda\Sigma}^{(\xi)\bullet},~\O_\lambda=\bigcup_{j=1}^{L_{\lambda\Sigma}^{(\xi)\bullet}}\O_{\lambda_j};\db\\
&\lambda_j\t{-lines}&&\lambda\t{-lines forming the $j$-th orbit $\O_{\lambda_j}$ of the class }O_{\lambda},~\lambda\in\Lk^{(\xi)};\db\\
&\Lambda_{\lambda_j,\pi}^{(\xi)\bullet}&&\t{the number of lines from an orbit $\O_{\lambda_j}$ in a $\pi$-plane};\db\\
&\Lambda_{\lambda,\pi}^{(\xi)\bullet}&&\t{the total number of $\lambda$-lines in a $\pi$-plane};\db\\
&\Pi_{\pi,\lambda_j}^{(\xi)\bullet} &&\t{the exact number of $\pi$-planes through a line of an orbit $\O_{\lambda_j}$};\db\\
&\Pi_{\pi,\lambda}^{(\xi)\bullet}&&\t{the average number of $\pi$-planes through a $\lambda$-line over all the}\db\\
&&&\t{$\lambda$-lines; if the union (class) $\O_\lambda$ consists of \emph{a single orbit} then,}\db\\
&&&\t{in fact, }\Pi_{\pi,\lambda}^{(\xi)}\t{ is \emph{the exact number} of $\pi$-planes through each $\lambda$-line};\db\\
&\I_{\pi,\lambda}^{\Pi\Lambda}&&\t{the $\#\O_\lambda\times\#\N_\pi$ submatrix of the plane-line incidence matrix $\I^{\Pi\Lambda}$}\db\\
&&&\t{with incidencies between $\pi$-planes and $\lambda$-lines;} \db\\
&\I_{\pi,\lambda_j}^{\Pi\Lambda}&&\t{the $\#\O_{\lambda_j}\times\#\N_\pi$ submatrix of $\I^{\Pi\Lambda}_{\pi,\lambda}$}\db\\
&&&\t{with incidencies between $\pi$-planes and $\lambda_j$-lines.}
 \end{align*}
\end{notation}
\begin{rmk}
If $L_{\lambda\Sigma}^{(\xi)\bullet}=1$ then $\Pi_{\pi,\lambda}^{(\xi)\bullet}$ certainly is an integer. If $\lambda$-lines form two or more orbits, i.e. $L_{\lambda\Sigma}^{(\xi)\bullet}\ge2$, then $\Pi_{\pi,\lambda}^{(\xi)\bullet}$ may be not  integer as well as integer.

 On the other end, for all pairs $(\pi,\lambda)$, we always have the same total number of $\lambda$-lines in each $\pi$-plane, i.e. $\Lambda_{\lambda,\pi}^{(\xi)\bullet}$ always is an integer,  see Lemma \ref{lem4_line&nplane}.
\end{rmk}

From now on, we consider $q\ge5$ apart from Theorem \ref{th3:q=2 3 4}.

Tables \ref{tab1} and \ref{tabUG} and Theorem \ref{th3_main_res} summarize the results of  Sections \ref{sec:useful}--\ref{sec:esults_q=0}. Theorem~\ref{th3:q=2 3 4} is obtained by an  exhaustive computer search using the symbol calculation system Magma~\cite{Magma}.

For the plane-line incidence matrix $\I^{\Pi\Lambda}$ of $\PG(3,q)$, $q\equiv\xi\pmod3$, Table~\ref{tab1}  shows the values $\Pi_{\pi,\lambda}^{(\xi)}$ (top entry) and $\Lambda_{\lambda,\pi}^{(\xi)}$ (bottom entry) for each pair $(\pi,\lambda)$, $\pi\in\Pk$,  where $\Pi_{\pi,\lambda}^{(\xi)}$  is  the exact (if $L_{\lambda\Sigma}^{(\xi)\bullet}=1$) or average (if $L_{\lambda\Sigma}^{(\xi)\bullet}\ge2$) number of $\pi$-planes through every $\lambda$-line, whereas $\Lambda_{\lambda,\pi}^{(\xi)}$ always is the exact number of $\lambda$-lines in every $\pi$-plane. In other words, $\Pi_{\pi,\lambda}^{(\xi)}$  is the exact or average number of ones in every row of the submatrix $\I^{\Pi\Lambda}_{\pi,\lambda}$  of $\I^{\Pi\Lambda}$, whereas $\Lambda_{\lambda,\pi}^{(\xi)}$ always is the exact number of ones in every column of $\I^{\Pi\Lambda}_{\pi,\lambda}$. The superscript $(\xi)$ is omitted for $\lambda\in\{\RC,\Tr,\IC,\UG,\UnG\}$ where the values $\Pi_{\pi,\lambda}^{(\xi)}$, $\Lambda_{\lambda,\pi}^{(\xi)}$ are the same for all~$q$.

\begin{table}[htbp]
\caption{Values $\Pi_{\pi,\lambda}^{(\xi)}$ (top entry) and $\Lambda_{\lambda,\pi}^{(\xi)}$ (bottom entry) for submatrices $\I_{\pi,\lambda}^{\Pi\Lambda}$ of the plane-line incidence matrix of $\PG(3,q)$, $q\equiv\xi\pmod3$, $\xi\in\{1,-1,0\}$, $q\ge5$, $\pi\in\Pk$, $\lambda\in\Lk^{(\ne0)}\cup\Lk^{(0)}$. The superscript~$(\xi)$ is omitted  if the values $\Pi_{\pi,\lambda}^{(\xi)}$ and $\Lambda_{\lambda,\pi}^{(\xi)}$ are the same for all~$q$}
$
\begin{array}
{@{}c@{~\,}ccccccc@{}}\hline
&\O_\lambda&\N_\pi\rightarrow&\Gamma\t{-}&2_\C\t{-}&3_\C\t{-}&\overline{1_\C}\t{-}\vphantom{H^{H^{H^H}}}&0_\C\t{-}\\
L_{\lambda\Sigma}^{\mathrm{od}}&\downarrow&&\,\t{planes}&\t{planes}&\t{planes}&\t{planes}&\t{planes}\\
L_{\lambda\Sigma}^{\mathrm{ev}}&\lambda\t{-lines}&&q+1&q^2+q&\frac{1}{6}(q^3-q)& \frac{1}{2}(q^3-q)&\frac{1}{3}(q^3-q)\vphantom{H_{H_H}}\\\hline

1&\RC\t{-lines}&\Pi_{\pi,\RC}&0&2&q-1&0&0\\
1&\frac{1}{2}(q^2+q)&\Lambda_{\RC,\pi}&0&1&3&0&0\\\hline

1&\Tr\t{-lines}&\Pi_{\pi,\Tr}&1&q&0&0&0\\
1&q+1&\Lambda_{\Tr,\pi}&1&1&0&0&0\\\hline

1&\IC\t{-lines}&\Pi_{\pi,\IC}&0&0&0&q+1&0\\
1&\frac{1}{2}(q^2-q)&\Lambda_{\IC,\pi}&0&0&0&1&0\\\hline

1&\UG\t{-lines}&\Pi_{\pi,\UG}&1&1&\frac{1}{2}(q-1)&\frac{1}{2}(q-1)&0\\
2&q^2+q&\Lambda_{\UG,\pi}&q&1&3&1&0\\\hline

2&\UnG\t{-lines}&\Pi_{\pi,\UnG}&0&2& \frac{1}{2}(q-2)& \frac{1}{2}q&0\\
1&q^3-q&\Lambda_{\UnG,\pi}&0&2(q-1)& 3(q-2)& q&0\\\hline

1&\RA\t{-lines}&\Pi_{\pi,\RA}^{(1)}&2&0&\frac{1}{3}(q-1)&0&\frac{2}{3}(q-1)\\
1&\frac{1}{2}(q^2+q)&\Lambda^{(1)}_{\RA,\pi}&q&0& 1& 0& 1\\\hline

1&\RA\t{-lines}&\Pi_{\pi,\RA}^{(-1)}&2&0&0& q-1&0\\
1&\frac{1}{2}(q^2+q)&\Lambda^{(-1)}_{\RA,\pi}&q&0& 0& 1& 0\\\hline

 1&\IA\t{-lines}&\Pi^{(1)}_{\pi,\IA}&0&0& 0& q+1& 0\\
1&\frac{1}{2}(q^2-q)&\Lambda^{(1)}_{\IA,\pi}&0&0& 0& 1& 0\\\hline

1&\IA\t{-lines}&\Pi^{(-1)}_{\pi,\IA}&0&0&\frac{1}{3}(q+1)& 0&\frac{2}{3}(q+1)\\
1&\frac{1}{2}(q^2-q)&\Lambda^{(-1)}_{\IA,\pi}&0&0& 1& 0& 1\\\hline

2&\EG\t{-lines}&\Pi^{(1)}_{\pi,\EG}&1&1& \frac{1}{6}(q-4)& \frac{1}{2}q&\frac{1}{3} (q-1)\\
1&q^3-q&\Lambda^{(1)}_{\EG,\pi}&q^2-q&q-1& q-4& q& q-1\\\hline

2&\EG\t{-lines}&\Pi^{(-1)}_{\pi,\EG}&1&1&\frac{1}{6}(q-2)& \frac{1}{2}(q-2)&\frac{1}{3}(q+1)\\
1&q^3-q&\Lambda^{(-1)}_{\EG,\pi}&q^2-q&q-1& q-2& q-2& q+1\\\hline

\ge2&\EnG\t{-lines}&\Pi^{(1)}_{\pi,\EnG}&0&1& \frac{q^2-3q+4}{6(q-1)}& \frac{q^2-q-2}{2(q-1)}& \frac{q^2+1}{3(q-1)}\\
\ge2&\scriptstyle{q^4-q^3-q^2+q}&\Lambda^{(1)}_{\EnG,\pi}&0&(q-1)^2&\scriptstyle{q^2-3q+4}&\scriptstyle{q^2-q-2}&q^2+1\\\hline

\ge2&\EnG\t{-lines}&\Pi^{(-1)}_{\pi,\EnG}&0&1& \frac{1}{6}(q-2)& \frac{1}{2}q& \frac{1}{3}(q+1)\\
\ge2&\scriptstyle{q^4-q^3-q^2+q}     &\Lambda^{(-1)}_{\EnG,\pi}&0&(q-1)^2&\scriptstyle{q^2-3q+2}&q^2-q& q^2-1\\\hline

\ge2&\EnG\t{-lines}&\Pi^{(0)}_{\pi,\EnG}&0&1&\frac{q^2-3q+3}{6(q-1)}&\frac{q^2-q-1}{2(q-1)}&\frac{q^2}{3(q-1)}\\
&\scriptstyle{q^4-q^3-q^2+q}&\Lambda^{(0)}_{\EnG,\pi}&0&(q-1)^2&\scriptstyle{q^2-3q+3}&\scriptstyle{q^2- q-1}&q^2\\\hline
\end{array}
$
\label{tab1}
\end{table}

\newpage
Table 1: continue\\
Values $\Pi_{\pi,\lambda}^{(\xi)}$ (top entry) and $\Lambda_{\lambda,\pi}^{(\xi)}$ (bottom entry) for submatrices $\I_{\pi,\lambda}^{\Pi\Lambda}$ of the plane-line incidence matrix of $\PG(3,q)$, $q\equiv\xi\pmod3$, $\xi\in\{1,-1,0\}$, $q\ge5$, $\pi\in\Pk$, $\lambda\in\Lk^{(\ne0)}\cup\Lk^{(0)}$. The superscript~$(\xi)$ is omitted  if the values $\Pi_{\pi,\lambda}^{(\xi)}$ and $\Lambda_{\lambda,\pi}^{(\xi)}$ are the same for all~$q$

$
\begin{array}
{@{}c@{~\,}ccccccc@{}}\hline
&\O_\lambda&\N_\pi\rightarrow&\Gamma\t{-}&2_\C\t{-}&3_\C\t{-}&\overline{1_\C}\t{-}\vphantom{H^{H^{H^H}}}&0_\C\t{-}\\
L_{\lambda\Sigma}^{\mathrm{od}}&\downarrow&&\,\t{planes}&\t{planes}&\t{planes}&\t{planes}&\t{planes}\\
L_{\lambda\Sigma}^{\mathrm{ev}}&\lambda\t{-lines}&&q+1&q^2+q&\frac{1}{6}(q^3-q)& \frac{1}{2}(q^3-q)&\frac{1}{3}(q^3-q)\vphantom{H_{H_H}}\\\hline
1&\Ar\t{-lines}&\Pi_{\pi,\Ar}^{(0)}&q+1&0&0&0&0\\
&1&\Lambda^{(0)}_{\Ar,\pi}&1&0&0&0&0\\\hline

3&\EA\t{-lines}&\Pi_{\pi,\EA}^{(0)}&1&\frac{q}{q+1}&\frac{q(q-2)}{6(q+1)}&\frac{q^2}{2(q+1)}&\frac{1}{3}q\\
&\scriptstyle{q^3+q^2-q-1}&\Lambda^{(0)}_{\EA,\pi}&q^2-1&q-1&q-2&q&q+1\\\hline
\end{array}
$
\newpage

\begin{table}[htbp]
\caption{Values $\Pi_{\pi,\lambda_j}^{(\xi)\bullet}$ (top entry) and
 $\Lambda_{\pi,\lambda_j}^{(\xi)\bullet}$ (bottom entry) for submatrices $\I_{\pi,\lambda_j}^{\Pi\Lambda}$ of the plane-line incidence matrix of $\PG(3,q), q\ge5,\pi\in\Pk$; even $q\not\equiv0\pmod3$ for $\lambda=\UG$; odd $q$ for $\lambda=\UnG$; odd $q\equiv\xi\pmod3$, $\xi\in\{1,-1\}$, for $\lambda=\EG$; $q\equiv0\pmod3$ for $\lambda=\EA$}
$
\begin{array}
{@{}cccccccc@{}}\hline
\O_{\lambda_j}&\N_\pi\rightarrow&\Gamma\t{-}&2_\C\t{-}&3_\C\t{-}&\overline{1_\C}\t{-}\vphantom{H^{H^{H^H}}}&0_\C\t{-}\\
\downarrow&&\t{planes}&\t{planes}&\t{planes}&\t{planes}&\t{planes}\\
\lambda_j\t{-lines}&&q+1&q^2+q&\frac{1}{6}(q^3-q)& \frac{1}{2}(q^3-q)&\frac{1}{3}(q^3-q)\vphantom{H_{H_{H_{H_H}}}}\\\hline

\UG_1\t{-lines}&\Pi_{\pi,\UG_1}^{(\ne0)\mathrm{ev}}&1&q& 0& 0&0\\
q+1&\Lambda_{\UG_1,\pi}^{(\ne0)\mathrm{ev}}&1&1& 0& 0&0\\\hline

\UG_2\t{-lines}&\Pi_{\pi,\UG_2}^{(\ne0)\mathrm{ev}}&1&0& \frac{1}{2}q&\frac{1}{2}q&0\\
q^2-1&\Lambda_{\UG_2,\pi}^{(\ne0)\mathrm{ev}}&q-1&0& 3& 1&0\\\hline\hline

\UnG_1\t{-lines}&\Pi_{\pi,\UnG{_1}}^\mathrm{od}&0&1& \frac{1}{2}(q-1)& \frac{1}{2}(q+1)&0\\
\frac{1}{2}(q^3-q)&\Lambda_{\UnG{_1},\pi}^\mathrm{od}&0&\frac{1}{2}(q-1)& \frac{3}{2}(q-1)& \frac{1}{2}(q+1)&0\\\hline

\UnG_2\t{-lines}&\Pi_{\pi,\UnG{_2}}^\mathrm{od}&0&3& \frac{1}{2}(q-3)&\frac{1}{2}( q-1)&0\\
\frac{1}{2}(q^3-q)&\Lambda_{\UnG{_2},\pi}^\mathrm{od}&0&\frac{3}{2}(q-1)& \frac{3}{2}(q-3)& \frac{1}{2}(q-1)&0\\\hline\hline

\EG_1\t{-lines}&\Pi^{(1)\mathrm{od}}_{\pi,\EG_1}&1&0& \frac{1}{6}(q-1)& \frac{1}{2}(q+1)& \frac{1}{3}(q-1)\\
\frac{1}{2}(q^3-q)&\Lambda^{(1)\mathrm{od}}_{\EG_1,\pi}&\frac{1}{2}(q^2-q)&0
&\frac{1}{2}(q-1)&\frac{1}{2}(q+1)&\frac{1}{2}(q-1)\\\hline

\EG_2\t{-lines}&\Pi^{(1)\mathrm{od}}_{\pi,\EG_2}&1&2&\frac{1}{6}(q-7)&\frac{1}{2}(q-1)&\frac{1}{3}(q-1)\\
\frac{1}{2}(q^3-q)&\Lambda^{(1)\mathrm{od}}_{\EG_2,\pi}&\frac{1}{2}(q^2-q)&q-1
&\frac{1}{2}(q-7)&\frac{1}{2}(q-1)&\frac{1}{2}(q-1)\\\hline\hline

\EG_1\t{-lines}&\Pi^{(-1)\mathrm{od}}_{\pi,\EG_1}&1&0& \frac{1}{6}(q+1)& \frac{1}{2}(q-1)& \frac{1}{3}(q+1)\\
\frac{1}{2}(q^3-q)&\Lambda^{(-1)\mathrm{od}}_{\EG_1,\pi}&\frac{1}{2}(q^2-q)&0
&\frac{1}{2}(q+1)&\frac{1}{2}(q-1)&\frac{1}{2}(q+1)\\\hline

\EG_2\t{-lines}&\Pi^{(-1)\mathrm{od}}_{\pi,\EG_2}&1&2&\frac{1}{6}(q-5)&\frac{1}{2}(q-3)& \frac{1}{3}(q+1)\\
\frac{1}{2}(q^3-q)&\Lambda^{(-1)\mathrm{od}}_{\EG_2,\pi}&\frac{1}{2}(q^2-q)&q-1
&\frac{1}{2}(q-5)&\frac{1}{2}(q-3)& \frac{1}{2}(q+1)\\\hline\hline

\EA_1\t{-lines}&\Pi^{(0)\mathrm{od}}_{\pi,\EA_1}&1&1& \frac{1}{6}(q-3)& \frac{1}{2}(q-1)&\frac{1}{3} q\\
q^3-q&\Lambda^{(0)\mathrm{od}}_{\EA_1,\pi}&q^2-q&q-1& q-3& q-1& q\\\hline

\EA_2\t{-lines}&\Pi^{(0)\mathrm{od}}_{\pi,\EA_2}&1&0& \frac{1}{3}q&0& \frac{2}{3}q\\
\frac{1}{2}(q^2-1)&\Lambda^{(0)\mathrm{od}}_{\EA_2,\pi}&\frac{1}{2}(q-1)&0& 1& 0& 1\\\hline

\EA_3\t{-lines}&\Pi^{(0)\mathrm{od}}_{\pi,\EA_3}&1&0& 0& q& 0\\
\frac{1}{2}(q^2-1)&\Lambda^{(0)\mathrm{od}}_{\EA_3,\pi}&\frac{1}{2}(q-1)&0& 0& 1& 0\\\hline
\end{array}
$
\label{tabUG}
\end{table}

The 1-st column of Table \ref{tab1} shows the total number of orbits of $\lambda$-lines  for $q$ odd ($L_{\lambda\Sigma}^{\mathrm{od}}$, top entry) and $q$ even ($L_{\lambda\Sigma}^{\mathrm{ev}}$, bottom entry).

In Table \ref{tabUG}, the values  $\Pi_{\pi,\lambda_j}^{(\xi)\bullet}$ and $\Lambda_{\pi,\lambda_j}^{(\xi)\bullet}$ are given for the following cases:
even $q\not\equiv0\pmod3$ with $\lambda=\UG$; odd $q$ with $\lambda=\UnG$; odd $q\equiv\xi\pmod3$, $\xi\in\{1,-1\}$, with $\lambda=\EG$; and $q\equiv0\pmod3$ with $\lambda=\EA$.

\begin{thm}\label{th3_main_res}
Let $q\ge5$, $q\equiv\xi\pmod3$. Let notations be as in Section \emph{\ref{sec_prelimin}} and  Notations~\emph{\ref{notation_1}, \ref{notation_2}}. The following holds:
\begin{description}
  \item[(i)] In $\PG(3,q)$, for the submatrices $\I^{\Pi\Lambda}_{\pi,\lambda}$ of the plane-line incidence matrix $\I^{\Pi\Lambda}$, the values $\Pi_{\pi,\lambda}^{(\xi)}$ \emph{(}i.e. the exact or average number of $\pi$-planes through a $\lambda$-line\emph{)} and $\Lambda_{\lambda,\pi}^{(\xi)}$ \emph{(}i.e. the exact number of  $\lambda$-lines in a $\pi$-plane\emph{)} are given in Table~\emph{\ref{tab1}}. The numbers $L_{\lambda\Sigma}^{(\xi)\bullet}$ of line orbits under $G_q$ in classes $\O_\lambda$ are also collected in the tables. For the submatrices $\I^{\Pi\Lambda}_{\pi,\lambda_j}$ corresponding to each of two orbits of the classes $\O_4=\O_\UG$, $\O_5=\O_\UnG $, $\O'_5=\O_\EG$, and three orbits of the class $\O_8=\O_\EA$, the values $\Pi_{\pi,\lambda_j}^{(\xi)\bullet}$, $\Lambda_{\pi,\lambda_j}^{(\xi)\bullet}$ are given in Table~\emph{\ref{tabUG}}.

  \item[(ii)]
  Let a class $\O_\lambda$ consist  of a single
  orbit according to Theorem \emph{\ref{th2:DMP_Orb}(i)}. Then, in Table~\emph{\ref{tab1}},  the value of $\Pi_{\pi,\lambda}^{(\xi)}$, $\pi\in\Pk$, is the \emph{exact number} of $\pi$-planes through every $\lambda$-line.

  \item[(iii)]
 Let $\pi\in\Pk$ for all $q$. Let a class $\O_\lambda$ consist of a single orbit according to Theorem \emph{\ref{th2:DMP_Orb}(i)}.   Then the submatrix $\I^{\Pi\Lambda}_{\pi,\lambda}$ of  $\I^{\Pi\Lambda}$ is
  a $(v_r,b_k)$ configuration of Definition \emph{\ref{def2_config}} with $v=\#\N_\pi$, $b=\#\O_\lambda$, $r=\Lambda_{\lambda,\pi}$, $k=\Pi_{\pi,\lambda}$. Also, up to rearrangement of rows and columns, the submatrices $\I^{\Pi\Lambda}_{\pi,\lambda}$ with $\Lambda_{\lambda,\pi}^{(\xi)}=1$ can be viewed as a concatenation of $\Pi_{\pi,\lambda}^{(\xi)}$ identity matrices of order $\#\O_\lambda$. The same holds for the submatrices $\I^{\Pi\Lambda}_{\pi,\lambda_j}$.

    \item[(iv)]
Let $\lambda\in\{\UG,\EG\}$ if $q\not\equiv0\pmod3$, $\lambda\in\{\UG,\EA\}$ if $q\equiv0\pmod3$. Then,  independently of the number of orbits in the class $\O_\lambda$, we have exactly one $\Gamma$-plane through every $\lambda$-line. Up to rearrangement of rows and columns, the submatrices $\I^{\Pi\Lambda}_{\Gamma,\lambda}$ can be viewed as a vertical concatenation of\/ $\Lambda_{\lambda,\Gamma}^{(\xi)}$ identity matrices of order $\#\N_\Gamma=q+1$.

    \item[(v)]
  For $q\not\equiv0\pmod3$, the submatrix $\I^{\Pi\Lambda}_{\Gamma,\RA}$ of $\I^{\Pi\Lambda}$ is a simple complete $2\t{-}(q+1,2,1)$ design in the sense of~\emph{\cite[Section 1.6]{HandbCombDes2v_k_lamb}}.

    \item[(vi)] For all $q\ge5$, all $q+1$ planes through an imaginary chord are $\overline{1_\C}$-planes forming a pencil. The $\binom{q}{2}(q+1)$-orbit of all  $\overline{1_\C}$-planes can be partitioned into $\binom{q}{2}$ pencils of planes having an imaginary chord as axis. If $q\equiv1\pmod3$, a similar property holds for imaginary axes and $\overline{1_\C}$-planes.
\end{description}
\end{thm}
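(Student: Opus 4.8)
The plan is to deduce part~\textbf{(vi)} from two incidence facts recorded in the $\IC$-row of Table~\ref{tab1}: that $\Pi_{\overline{1_\C},\IC}=q+1$ with $\Pi_{\pi,\IC}=0$ for every $\pi\in\Pk\setminus\{\overline{1_\C}\}$, and that $\Lambda_{\IC,\overline{1_\C}}=1$. Since $\O_\IC$ consists of a single orbit (Theorem~\ref{th2:DMP_Orb}(i)), the first value is by part~\textbf{(ii)} the \emph{exact} number of $\overline{1_\C}$-planes through every imaginary chord. Because every line of $\PG(3,q)$ lies on exactly $\theta_{1,q}=q+1$ planes and those planes form a pencil, the $q+1$ planes through an imaginary chord are all $\overline{1_\C}$-planes and constitute a pencil whose axis is that chord. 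Geometrically this is transparent and gives an independent check: an imaginary chord joins a complex-conjugate pair $Q,\overline{Q}\in\C$, so any $\F_q$-plane $\pi$ containing it meets $\C$ in an $\F_q$-rational (hence Galois-stable) degree-$3$ divisor containing $Q+\overline{Q}$. Galois-stability forces the residual intersection point $R$ to be $\F_q$-rational and distinct from $Q,\overline{Q}$; as $\pi$ then has two distinct non-real points on $\C$ it is not osculating, whence $\pi$ is a $\overline{1_\C}$-plane.

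For the partition I would count. There are $\#\O_\IC=\binom{q}{2}$ imaginary chords, each carrying a pencil of $q+1$ distinct $\overline{1_\C}$-planes, giving $\binom{q}{2}(q+1)=(q^3-q)/2=\#\N_{\overline{1_\C}}$ planes in all. To see that these $\binom{q}{2}$ pencils are pairwise disjoint I invoke $\Lambda_{\IC,\overline{1_\C}}=1$: every $\overline{1_\C}$-plane contains exactly one imaginary chord, and therefore belongs to exactly one pencil. Thus the pencils cover the whole orbit $\N_{\overline{1_\C}}$ without overlap, which is precisely the asserted partition into $\binom{q}{2}$ pencils with imaginary chords as axes.

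The claim for $q\equiv1\pmod3$ follows by the same scheme, now reading the $\IA$-row of Table~\ref{tab1} for $\xi=1$, where $\Pi^{(1)}_{\overline{1_\C},\IA}=q+1$, $\Lambda^{(1)}_{\IA,\overline{1_\C}}=1$, and $\Pi^{(1)}_{\pi,\IA}=0$ for $\pi\neq\overline{1_\C}$, while $\#\O_\IA=\binom{q}{2}$ and $\O_\IA$ is a single orbit by Theorem~\ref{th2:DMP_Orb}(i). Repeating the count and the disjointness argument yields a second partition of $\N_{\overline{1_\C}}$ into $\binom{q}{2}$ pencils, now with imaginary axes as axes. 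I expect the only genuine subtlety, and the point to flag, to be the restriction to $\xi=1$: the analogy fails for $q\equiv-1\pmod3$, where the $\IA$-row instead has $\Pi^{(-1)}_{\pi,\IA}$ supported on the $3_\C$- and $0_\C$-planes with $\Pi^{(-1)}_{\overline{1_\C},\IA}=0$, so imaginary axes never lie in $\overline{1_\C}$-planes and produce no such pencils. Everything else is a direct transcription of the table values together with the elementary fact that the planes through a fixed line form a pencil.
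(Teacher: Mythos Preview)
Your argument is correct and follows essentially the same pattern as the paper: part~(vi) is established in the paper as Theorem~\ref{th6:imag_chord} (for imaginary chords) and the Corollary after Theorem~\ref{th6:RA_IA} (for imaginary axes when $\xi=1$), and both simply amount to reading off $\Pi_{\overline{1_\C},\IC}=q+1$, $\Lambda_{\IC,\overline{1_\C}}=1$ (respectively the $\IA$-analogues) together with $\#\O_\IC=\#\O_\IA=\binom{q}{2}$, exactly as you do. Your use of $\Lambda_{\IC,\overline{1_\C}}=1$ to prove disjointness of the pencils is in fact slightly more explicit than what the paper writes.

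The one genuine difference is in your ``independent check'': the paper excludes $\Gamma$-, $2_\C$-, and $3_\C$-planes through an imaginary chord by noting that each such plane already contains a tangent or a real chord, and then invoking Theorem~\ref{th2_Hirs}(v) (no two chords of $\C$ meet off $\C$). You instead argue via the Galois-stable intersection divisor $Q+\overline{Q}+R$. Both are valid; the paper's route is more elementary (it uses only the standard chord-disjointness fact already on hand), while yours has the advantage of directly producing the unique $\F_q$-rational point $R$ on the plane and hence immediately identifying it as a $\overline{1_\C}$-plane without a separate case analysis.
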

\newpage
\begin{thm}\label{th3:q=2 3 4}
 Let the types of lines and planes be as in Table~\emph{\ref{tab1}}.
\begin{description}
    \item[(i)] Let $q=2$. The group $G_2\cong\mathbf{S}_3\mathbf{Z}_2^3$ contains $8$ subgroups isomorphic to $PGL(2,2)$ divided into two conjugacy classes. For one of these subgroups, the matrices corresponding to the projectivities of the subgroup assume the form described by \eqref{eq2_M}. For this subgroup (and only for it) the line-plane incidence matrix has the form of  Table~\emph{\ref{tab1}} for $q\equiv-1\pmod3$.
    \item[(ii)] Let $q=3$. The group $G_3\cong\mathbf{S}_4\mathbf{Z}_2^3$ contains $24$ subgroups isomorphic to $PGL(2,3)$ divided into four conjugacy classes. For one of these subgroups, the matrices corresponding to the projectivities of the subgroup assume the form described by \eqref{eq2_M}. For this subgroup (and only for it) the line-plane incidence matrix has the form of  Table~\emph{\ref{tab1}}  for $q\equiv0\pmod3$.
    \item[(iii)] Let $q=4$. The group $G_4\cong\mathbf{S}_5\cong P\Gamma L(2,4)$ contains one subgroup isomorphic to $PGL(2,4)$. The matrices corresponding to the projectivities of this subgroup assume the form described by \eqref{eq2_M} and for this subgroup the line-plane incidence matrix has the form of  Table~\emph{\ref{tab1}} for $q\equiv1\pmod3$.
\end{description}
\end{thm}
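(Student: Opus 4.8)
The statement is a finite verification for the three exceptional values $q\in\{2,3,4\}$, so the plan is to reproduce, by explicit computation in Magma, the entire orbit-and-incidence picture in each case and to confront it with Table~\ref{tab1}. First I would realise $\PG(3,q)$ concretely: list the $\theta_{3,q}$ points, the $\beta_{3,q}$ lines and the $\theta_{3,q}$ planes, fix the twisted cubic $\C$ in the canonical form \eqref{eq2_cubic}, and compute its full projective stabiliser $G_q\le PGL(4,q)$. As a first consistency check I would confirm $\#G_q$ and the isomorphism types against Theorem~\ref{th2_Hirs}(i): namely $\#G_2=48$ with $G_2\cong\mathbf{S}_3\mathbf{Z}_2^3$, $\#G_3=192$ with $G_3\cong\mathbf{S}_4\mathbf{Z}_2^3$, and $\#G_4=120$ with $G_4\cong\mathbf{S}_5\cong P\Gamma L(2,4)$.

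Next I would enumerate all subgroups of $G_q$ isomorphic to $PGL(2,q)$ and sort them into $G_q$-conjugacy classes, recovering the counts asserted in parts (i)--(iii): eight subgroups in two classes for $q=2$, twenty-four in four classes for $q=3$, and a single subgroup for $q=4$. Among these I would single out the distinguished copy $H$ whose elements are \emph{exactly} the matrices $\MM$ of the form \eqref{eq2_M} as the parameters $(a{:}b{:}c{:}d)$ range over $PGL(2,q)$ with $ad-bc\ne0$; this is the canonical copy implicitly used to derive Table~\ref{tab1} for $q\ge5$. For $q=4$ uniqueness is automatic, since $\mathbf{S}_5$ contains a unique $PGL(2,4)\cong\mathbf{A}_5$; for $q=2,3$ I would verify computationally that exactly one subgroup carries the literal matrix form \eqref{eq2_M}.

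With $H$ in hand, the core step is to let $H$ act on planes and on lines, compute the orbits, label each orbit by the geometric type of its members (the plane types $\Gamma,2_\C,3_\C,\overline{1_\C},0_\C$ and the line types of $\Lk^{(\xi)}$), and then, for every pair $(\pi,\lambda)$, assemble the submatrix $\I^{\Pi\Lambda}_{\pi,\lambda}$ and read off the number of ones per row ($\Pi_{\pi,\lambda}$) and per column ($\Lambda_{\lambda,\pi}$), together with the orbit counts $L_{\lambda\Sigma}$ and, where applicable, the finer data of Table~\ref{tabUG}. These are matched against Table~\ref{tab1} with $\xi=-1$ for $q=2$, $\xi=0$ for $q=3$, and $\xi=1$ for $q=4$. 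To establish the ``(and only for it)'' clauses I would run the identical orbit-and-incidence computation for a representative of each remaining conjugacy class of $PGL(2,q)$-subgroups and exhibit at least one entry that disagrees with the table, so that no other class reproduces the prescribed pattern.

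The main difficulty is conceptual rather than arithmetical: for $q\le4$ the full stabiliser $G_q$ is strictly larger than $PGL(2,q)$, so its orbits on planes and lines are coarser than the generic decomposition behind Table~\ref{tab1}, and the full group does \emph{not} reproduce the table. The content of the theorem is precisely that restricting to the correct $PGL(2,q)$-subgroup---the one realised by \eqref{eq2_M}---restores the generic incidence pattern. The delicate practical point in the verification is the bookkeeping of orbit types for such small $q$, where several of the class sizes $\#\O_\lambda$ degenerate to small values and one must check that the geometric labels $\pi$ and $\lambda$ remain well defined and that the table formulas still evaluate to the correct non-negative integers before the comparison is meaningful.
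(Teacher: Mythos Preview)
Your proposal is correct and is essentially the same approach as the paper's: the paper simply states that Theorem~\ref{th3:q=2 3 4} is ``obtained by an exhaustive computer search using the symbol calculation system Magma'', and your plan is a detailed blueprint for exactly such a search. If anything, you have spelled out more of the computational bookkeeping (enumerating the $PGL(2,q)$-subgroups and conjugacy classes, isolating the copy given by \eqref{eq2_M}, and checking the other classes fail) than the paper itself records.
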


%\begin{rmk}
%Open problems and their solutions for $5\le q\le37$ and $q=64$ are given in Section \ref{sec:classification}.
%\end{rmk}

\section{Some useful relations}\label{sec:useful}
In this section, we omit the superscripts ``$(\xi)$'', ``od'', and ``'ev' as they are the same for all terms in a formula; in particular, we use $\Lk$ and $L_{\lambda\Sigma}$ instead of $\Lk^{(\xi)}$ and $L_{\lambda\Sigma}^{(\xi)\bullet}$. In further, when relations of this section are applied, we add the superscripts if they are necessary by the context.

\begin{lem}\label{lem4_line&nplane}
The following holds:
\begin{description}
  \item[(i)]
  The number  $\Lambda_{\lambda_j,\pi}$ of lines from an orbit $\O_{\lambda_j}$ in a plane of an orbit $\N_\pi$ is the same for all planes of~$\N_\pi$.

  \item[(ii)]
  The total number  $\Lambda_{\lambda,\pi}$ of lines from an orbit union $\O_\lambda$ in a plane of an orbit $\N_\pi$ is the same for all planes of~$\N_\pi$. We have \begin{align}\label{eq4_class_Lambda}
&  \Lambda_{\lambda,\pi}=\sum_{j=1}^{L_{\lambda\Sigma}} \Lambda_{\lambda_j,\pi}.
  \end{align}

  \item[(iii)] The number $\Pi_{\pi,\lambda_j}$ of planes from an orbit $\N_\pi$ through a line of an orbit $\O_{\lambda_j}$ is the same for all lines of $\O_{\lambda_j}$.

  \item[(iv)] The average
number $\Pi_{\pi,\lambda}$ of planes from an orbit $\N_\pi$ through a line of a union $\O_\lambda$ over all lines of $\O_\lambda$ satisfies the following relations:
\begin{align}\label{eq4:Pi_aver}
&\Lambda_{\lambda,\pi}\cdot\# \N_\pi=\Pi_{\pi,\lambda}\cdot\#\O_\lambda;\db\\
&\Pi_{\pi,\lambda}=\frac{1}{\#\O_{\lambda}}\sum_{j=1}^{L_{\lambda\Sigma}}\left(\Pi_{\pi,\lambda_j}\cdot\#\O_{\lambda_j}\right).\label{eq4:Pi_aver2}
\end{align}

 \item[(v)]
If $L_{\lambda\Sigma}=1$, then $\O_\lambda$ is an orbit and the
number  of planes from $\N_\pi$ through a line of $\O_\lambda$ is
the same for all lines of $\O_\lambda$.
In this case $\Pi_{\pi,\lambda}$ is certainly an integer.
     If $\Pi_{\pi,\lambda}$ is not an integer then the union $\O_\lambda$ contains more than one orbit, i.e. $L_{\lambda\Sigma}\ge2$.
\end{description}
\end{lem}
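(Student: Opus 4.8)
The plan is to reduce all five parts to two elementary facts about the action of $G_q$: every projectivity $\varphi\in G_q$ preserves the line-in-plane incidence relation of $\PG(3,q)$, and, since $\varphi$ fixes the cubic $\C$ and therefore its osculating developable $\Gamma$, it preserves the geometric type of every line and every plane. Consequently $\varphi$ maps each orbit $\N_\pi$ and each orbit $\O_{\lambda_j}$ onto itself, and by definition of an orbit the induced action on a single orbit is transitive. I would record these two observations once at the outset, so that the remaining arguments are pure transport of structure under the right action $\ell\mapsto\ell\varphi$, $\pi'\mapsto\pi'\varphi$ used throughout the paper.

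For part (i) I would pick two planes $\pi_1,\pi_2\in\N_\pi$, invoke transitivity to choose $\varphi\in G_q$ with $\pi_1\varphi=\pi_2$, and observe that $\ell\mapsto\ell\varphi$ restricts to a bijection between the $\lambda_j$-lines contained in $\pi_1$ and those contained in $\pi_2$ (incidence is preserved and $\O_{\lambda_j}$ is mapped to itself). This forces equal cardinalities, so $\Lambda_{\lambda_j,\pi}$ is constant on $\N_\pi$. Part (iii) is the mirror image: for $\ell_1,\ell_2\in\O_{\lambda_j}$ take $\varphi\in G_q$ with $\ell_1\varphi=\ell_2$ and note that $\pi'\mapsto\pi'\varphi$ maps the $\pi$-planes through $\ell_1$ bijectively onto those through $\ell_2$. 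Part (ii) then follows at once, since a $\lambda$-line in a given plane belongs to exactly one suborbit $\O_{\lambda_j}$, so the count in that plane is $\sum_j\Lambda_{\lambda_j,\pi}$, which is \eqref{eq4_class_Lambda}, and each summand is plane-independent by (i).

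The quantitative relations in part (iv) I would get by double-counting the incident pairs $(\pi',\ell)$ with $\pi'\in\N_\pi$, $\ell\in\O_\lambda$, $\ell\subset\pi'$. Summing over planes first gives $\Lambda_{\lambda,\pi}\cdot\#\N_\pi$ by (ii); summing over lines first gives $\Pi_{\pi,\lambda}\cdot\#\O_\lambda$ directly from the definition of $\Pi_{\pi,\lambda}$ as an average, which proves \eqref{eq4:Pi_aver}. Refining the line-sum by suborbit and using (iii), each $\O_{\lambda_j}$ contributes $\Pi_{\pi,\lambda_j}\cdot\#\O_{\lambda_j}$ pairs, and dividing the total by $\#\O_\lambda$ yields \eqref{eq4:Pi_aver2}. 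For part (v), if $L_{\lambda\Sigma}=1$ then $\O_\lambda$ is a single orbit, and (iii) makes the number of $\pi$-planes through each line constant and equal to the integer $\Pi_{\pi,\lambda}$; the final assertion is just the contrapositive.

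I do not expect a genuine obstacle, because the heavy lifting is done by orbit-transitivity and a single double-count. The one point that must not be skipped is the claim that projectivities in $G_q$ preserve line and plane types: this rests on each type being defined purely through incidence data relative to $\C$ and $\Gamma$ (the number of real or complex conjugate points of $\C$ on a line, membership of a line or plane in $\Gamma$, and so on), all of which are invariant under any projectivity fixing $\C$. Once this invariance is stated, parts (i)--(v) are formal consequences.
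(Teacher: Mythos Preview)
Your proposal is correct and follows essentially the same route as the paper: transitivity of $G_q$ on each orbit gives the bijections in (i) and (iii), disjointness of the suborbits gives (ii), and a double count of incident pairs $(\pi',\ell)$ gives (iv), with (v) the immediate contrapositive. The only minor redundancy is your separate justification that $G_q$ preserves line and plane types; since $\N_\pi$ and $\O_{\lambda_j}$ are by definition $G_q$-orbits, invariance under each $\varphi\in G_q$ is automatic and no appeal to the geometric descriptions is needed.
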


\begin{proof}
\begin{description}
  \item[(i)]
Consider planes $\pk_1$ and $\pk_2$ of~$\N_\pi$. Denote by $\ell$ a line of $\O_{\lambda_j}$. Let $S(\pk_1)$ and $S(\pk_2)$ be subsets of $\O_{\lambda_j}$ such that $S(\pk_1)=\{\ell\in\O_{\lambda_j}|\ell\in\pk_1\}$, $S(\pk_2)=\{\ell\in\O_{\lambda_j}|\ell\in\pk_2\}$. There exists $\varphi\in G_q$  such that  $\pk_2=\pk_1\varphi$. Clearly, $\varphi$ embeds $S(\pk_1)$ in $S(\pk_2)$, i.e. $S(\pk_1)\varphi\subseteq S(\pk_2)$ and $\#S(\pk_1)\le\#S(\pk_2)$. In the same way, $\varphi^{-1}$ embeds $S(\pk_2)$ in $S(\pk_1)$, i.e.  $\#S(\pk_2)\le\#S(\pk_1)$. Thus,  $\#S(\pk_2)=\#S(\pk_1)$.

  \item[(ii)] For fixed $\lambda$, orbits $\O_{\lambda_j}$ do not intersect each other.

  \item[(iii)] The assertion can be proved similarly to case (i).

  \item[(iv)] The cardinality $C_1$ of the multiset consisting of lines of $\O_\lambda$ in all planes of $\N_\pi$ is equal to $\Lambda_{\lambda,\pi}\cdot\# \N_\pi$. The cardinality $C_2$ of the multiset consisting of planes of $\N_\pi$ through all lines of $\O_\lambda$ is  $\Pi_{\pi,\lambda}\cdot\#\O_\lambda$. Every $C_i$ is the number of ones in the incidence submatrix $\I_{\pi,\lambda}^{\Pi\Lambda}$ of $\I^{\Pi\Lambda}$.  Thus, $C_1=C_2$.

      The assertion \eqref{eq4:Pi_aver2} holds as $\O_\lambda$ is \emph{partitioned} by $L_{\lambda\Sigma}$ orbits $\O_{\lambda_j}$.

  \item[(v)] The assertion follows from the case (iii).\qedhere
 \end{description}
 \end{proof}

\begin{corollary}\label{cor4_=0}
  If  $\Pi_{\pi,\lambda}=0$ then $\Lambda_{\lambda,\pi}=0$ and vice versa.
\end{corollary}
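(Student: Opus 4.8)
The plan is to read off the corollary directly from the counting identity \eqref{eq4:Pi_aver} proved in Lemma~\ref{lem4_line&nplane}(iv), which states $\Lambda_{\lambda,\pi}\cdot\#\N_\pi=\Pi_{\pi,\lambda}\cdot\#\O_\lambda$. The only point that needs to be recorded is that both orbit sizes $\#\N_\pi$ and $\#\O_\lambda$ are strictly positive: by Theorem~\ref{th2_Hirs}(ii) every plane orbit $\N_\pi$ is nonempty, and by Theorem~\ref{th2_Hirs}(iv) every line class $\O_\lambda$ is nonempty. Once this is noted, both factors may legitimately be cancelled and the zero-product property over the rationals disposes of each implication.

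For the forward direction I would assume $\Pi_{\pi,\lambda}=0$; then the right-hand side of \eqref{eq4:Pi_aver} vanishes, so $\Lambda_{\lambda,\pi}\cdot\#\N_\pi=0$, and since $\#\N_\pi>0$ this forces $\Lambda_{\lambda,\pi}=0$. For the converse I would assume $\Lambda_{\lambda,\pi}=0$; then the left-hand side of \eqref{eq4:Pi_aver} is zero, so $\Pi_{\pi,\lambda}\cdot\#\O_\lambda=0$, and since $\#\O_\lambda>0$ this forces $\Pi_{\pi,\lambda}=0$. The two arguments are completely symmetric, exchanging the roles of the two positive orbit sizes.

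There is no real obstacle here: the entire geometric content has already been absorbed into Lemma~\ref{lem4_line&nplane}(iv), and the corollary is merely the elementary observation that a product of two nonnegative quantities, one of which has a strictly positive coefficient, vanishes precisely when the other factor does.
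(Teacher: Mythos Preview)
Your proof is correct and follows exactly the paper's approach: the paper's own argument consists of the single sentence ``The assertions follow from \eqref{eq4:Pi_aver}.'' You have simply spelled out the obvious step (positivity of $\#\N_\pi$ and $\#\O_\lambda$) that the paper leaves implicit.
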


\begin{proof}
    The assertions follow from \eqref{eq4:Pi_aver}.
\end{proof}

 \begin{thm}\label{th4_linenplane}
Let the lines of $\PG(3,q)$ be partitioned under $G_q$ into $\#\Lk$ classes $\O_\lambda$ where every class is a union of orbits of $\lambda$-lines, $\lambda\in\Lk$. Also, let the planes of $\PG(3,q)$ be partitioned under $G_q$ by $\#\Pk$ orbits $\N_\pi$ of $\pi$-planes, $\pi\in\Pk$.
The following holds:
\begin{align}
 &\sum_{\pi\in\Pk} \Pi_{\pi,\lambda}=q+1,~\lambda\t{ is fixed};\label{eq4_planes_through_line_sum}\db\\
&\sum_{\lambda\in\Lk}\Lambda_{\lambda,\pi}=\beta_{2,q}=q^2+q+1,~\pi\t{ is fixed}.\label{eq4_lines_in_plane_sum}
\end{align}
 \end{thm}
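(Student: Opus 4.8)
The plan is to prove both identities by the same elementary partition (double-counting) principle, exploiting that the orbits $\N_\pi$, $\pi\in\Pk$, partition all planes of $\PG(3,q)$ while the classes $\O_\lambda$, $\lambda\in\Lk$, partition all lines. Neither identity requires knowledge of the individual entries of Table~\ref{tab1}; they are structural constraints that every row-sum and column-sum family must satisfy.

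For \eqref{eq4_planes_through_line_sum} I would fix an arbitrary $\lambda$-line $\ell\in\O_\lambda$ and recall that the planes of $\PG(3,q)$ containing $\ell$ form a pencil, so there are exactly $\theta_{1,q}=q+1$ of them. Since $\Pk$ partitions all planes into the orbits $\N_\pi$, each plane through $\ell$ lies in precisely one $\N_\pi$, whence $\sum_{\pi\in\Pk}\bigl(\#\{\pi\text{-planes through }\ell\}\bigr)=q+1$ holds for every single line $\ell$. Averaging this identity over all $\ell\in\O_\lambda$ and exchanging the two finite sums yields $\sum_{\pi\in\Pk}\Pi_{\pi,\lambda}=q+1$; the averaging causes no difficulty precisely because the value $q+1$ is independent of $\ell$.

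For \eqref{eq4_lines_in_plane_sum} I would invoke Lemma~\ref{lem4_line&nplane}(ii), which guarantees that $\Lambda_{\lambda,\pi}$ is the exact number of $\lambda$-lines in any plane of $\N_\pi$. Fixing one $\pi$-plane $\pk$, observe that $\pk\cong\PG(2,q)$ contains exactly $\beta_{2,q}=q^2+q+1$ lines. Because the classes $\O_\lambda$ partition the entire line set of $\PG(3,q)$, every line lying in $\pk$ belongs to exactly one class, so summing $\Lambda_{\lambda,\pi}$ over $\lambda\in\Lk$ counts each line of $\pk$ exactly once, giving $q^2+q+1$.

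There is essentially no analytic obstacle: both statements are bookkeeping consequences of the two partitions together with the standard counts $\theta_{1,q}=q+1$ and $\beta_{2,q}=q^2+q+1$. The only point requiring a little care is the first sum, where $\Pi_{\pi,\lambda}$ is an \emph{average} rather than an exact count when $\O_\lambda$ splits into several orbits; linearity of the average, equivalently formula \eqref{eq4:Pi_aver2}, resolves this immediately. As a consistency check I would also note that \eqref{eq4_lines_in_plane_sum} and \eqref{eq4_planes_through_line_sum} are compatible through the incidence-count relation \eqref{eq4:Pi_aver}.
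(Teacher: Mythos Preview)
Your proposal is correct and follows essentially the same approach as the paper's own proof, which simply notes that the two identities hold because the planes and lines are partitioned by the $\N_\pi$ and $\O_\lambda$, respectively, and there are $q+1$ planes through any line and $\beta_{2,q}$ lines in any plane. Your version is more explicit---in particular your remark about averaging over $\O_\lambda$ to handle the fact that $\Pi_{\pi,\lambda}$ is only an average when $L_{\lambda\Sigma}\ge2$ is a nice clarification that the paper leaves implicit.
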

\begin{proof}
         Relations \eqref{eq4_planes_through_line_sum} and \eqref{eq4_lines_in_plane_sum} hold as the lines and the planes of  $\PG(3,q)$ are \emph{partitioned}  under $G_q$ by unions of line orbits and  by orbits of planes, respectively. In total, in $\PG(3,q)$, there are $q+1$ planes through every line and $\beta_{2,q}$ lines in every plane.
\end{proof}

\begin{thm}\label{th4_ext}
 Let $\ell_\t{\emph{ext}}$ be an external line with respect to $\C$. Let
$\Pi_{\pi}(\ell_\t{\emph{ext}})$ be the number of $\pi$-planes through  $\ell_\t{\emph{ext}}$, $\pi\in\Pk.$ The following holds:
\begin{align} &\Pi_{\Gamma}(\ell_\t{\emph{ext}})+\Pi_{\overline{1_\C}}(\ell_\t{\emph{ext}})+2\Pi_{2_\C}(\ell_\t{\emph{ext}})
+3\Pi_{3_\C}(\ell_\t{\emph{ext}})=q+1;\label{eq4_ext_line2}\db \\
 &\Pi_{0_\C}(\ell_\t{\emph{ext}})=\Pi_{2_\C}(\ell_\t{\emph{ext}})+2\Pi_{3_\C}(\ell_\t{\emph{ext}}).\label{eq4_ext_line3}
\end{align}
\end{thm}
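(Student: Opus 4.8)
The plan is to derive both identities by counting the pencil of $q+1$ planes through $\ell_\t{\emph{ext}}$ in two different ways and subtracting. Throughout write $\Pi_\pi$ for $\Pi_\pi(\ell_\t{\emph{ext}})$, $\pi\in\Pk$. First I would record the plain plane count: through any line of $\PG(3,q)$ pass exactly $q+1$ planes, and since each plane has exactly one type, this is just \eqref{eq4_planes_through_line_sum} of Theorem~\ref{th4_linenplane} specialised to the single line $\ell_\t{\emph{ext}}$, giving
\[
\Pi_\Gamma+\Pi_{2_\C}+\Pi_{3_\C}+\Pi_{\overline{1_\C}}+\Pi_{0_\C}=q+1.
\]

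The substance is a second, weighted count of the points of $\C$ distributed over the same pencil, which will produce \eqref{eq4_ext_line2}. The $q+1$ planes through $\ell_\t{\emph{ext}}$ form a pencil, so every point of $\PG(3,q)$ off $\ell_\t{\emph{ext}}$ lies in exactly one of them; since $\ell_\t{\emph{ext}}$ is external, all $q+1$ points of $\C$ are off $\ell_\t{\emph{ext}}$ and are therefore partitioned by the pencil. Next I would use that the number of points of $\C$ in a plane is fixed by its type: a $d_\C$-plane carries $d$ points for $d=0,2,3$, an $\overline{1_\C}$-plane carries $1$, and a $\Gamma$-plane carries exactly $1$. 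The last value is the one point needing a short check: substituting $P(s)=\Pf(s^3,s^2,s,1)$ into the osculating plane $\pi_{\mathrm{osc}}(t)=\boldsymbol{\pi}(1,-3t,3t^2,-t^3)$ of \eqref{eq2_osc_plane} yields $s^3-3ts^2+3t^2s-t^3=(s-t)^3$, so an osculating plane meets $\C$ only at its point of contact. Summing these per-plane contributions over the pencil and equating the total with $\#\C=q+1$ gives
\[
\Pi_\Gamma+\Pi_{\overline{1_\C}}+2\Pi_{2_\C}+3\Pi_{3_\C}=q+1,
\]
which is precisely \eqref{eq4_ext_line2}.

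Finally, subtracting the first display from the second cancels $\Pi_\Gamma$ and $\Pi_{\overline{1_\C}}$ and leaves $\Pi_{2_\C}+2\Pi_{3_\C}-\Pi_{0_\C}=0$, i.e.\ \eqref{eq4_ext_line3}. The only real content is the weighted point count, and the only step requiring care is the count for the $\Gamma$-planes together with the observation that the defining property of each plane type already encodes its intersection number with $\C$, so that no separate case analysis is needed; in particular the argument is uniform in $\xi=q\bmod 3$ and never uses the type of $\ell_\t{\emph{ext}}$ beyond its being disjoint from $\C$. I therefore expect no serious obstacle.
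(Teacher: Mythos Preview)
Your proof is correct and follows essentially the same double-counting argument as the paper: both count incidences between the $q+1$ points of $\C$ and the pencil of planes through $\ell_\t{ext}$, then subtract from the unweighted plane count \eqref{eq4_planes_through_line_sum} to obtain \eqref{eq4_ext_line3}. Your version is slightly more explicit in verifying that a $\Gamma$-plane meets $\C$ in exactly one point, which the paper takes for granted.
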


\begin{proof} For \eqref{eq4_ext_line2}, we consider $q+1$ planes through $\ell_\t{ext}$ and a point of $\C$. These planes cannot be $0_\C$-planes. Also, every $2_\C$- and $3_\C$-plane appears two and three times, respectively. Finally, \eqref{eq4_ext_line3} follows from \eqref{eq4_planes_through_line_sum} and \eqref{eq4_ext_line2}.
\end{proof}

\begin{corollary}\label{cor4}
  The following holds:
  \begin{align}\label{eq4_obtainPi1}
  &\Pi_{\pi,\lambda}=\P_{\pi,\lambda}\triangleq\frac{\Lambda_{\lambda,\pi}\cdot\# \N_\pi}{\#\O_\lambda};\db\\
  &\Lambda_{\lambda,\pi}=\L_{\lambda,\pi}\triangleq\frac{\Pi_{\pi,\lambda}\cdot\#\O_\lambda}{\#\N_\pi};\db\label{eq4_obtainLamb1}\\
  &\Pi_{\pi^*,\lambda}=\Ps_{\pi^*,\lambda}\triangleq q+1-\sum_{\pi\in\Pk\setminus\{\pi^*\}} \Pi_{\pi,\lambda},~\lambda\t{ is fixed},~\pi^*\in\Pk;\label{eq4_obtainPi2}\db\\
&\Lambda_{\lambda^*,\pi}=\Ll_{\lambda^*,\pi}\triangleq q^2+q+1-\sum_{\lambda\in\Lk\setminus\{\lambda^*\}}\Lambda_{\lambda,\pi},~\pi\t{ is fixed},,~\lambda^*\in\Lk.\label{eq4_obtainLamb2}
  \end{align}
\end{corollary}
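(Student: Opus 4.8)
The four displayed identities are purely algebraic rearrangements of relations already proved in this section, so the plan is to match each formula to its source and solve for the quantity on the left-hand side. No new geometric input is needed beyond what Lemma~\ref{lem4_line&nplane} and Theorem~\ref{th4_linenplane} supply.

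First I would read off \eqref{eq4_obtainPi1} and \eqref{eq4_obtainLamb1} from the double-counting identity \eqref{eq4:Pi_aver} of Lemma~\ref{lem4_line&nplane}(iv), namely $\Lambda_{\lambda,\pi}\cdot\#\N_\pi=\Pi_{\pi,\lambda}\cdot\#\O_\lambda$. Dividing through by $\#\O_\lambda$ isolates $\Pi_{\pi,\lambda}$ and produces $\P_{\pi,\lambda}$; dividing instead by $\#\N_\pi$ isolates $\Lambda_{\lambda,\pi}$ and produces $\L_{\lambda,\pi}$. Both divisions are legitimate because every orbit union $\O_\lambda$ and every plane orbit $\N_\pi$ is nonempty, so the cardinalities $\#\O_\lambda$ and $\#\N_\pi$ are positive by the counts recorded in Theorems~\ref{th2_Hirs} and \ref{th2:DMP_Orb}.

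Next I would obtain \eqref{eq4_obtainPi2} by singling out one summand in the row-sum \eqref{eq4_planes_through_line_sum} of Theorem~\ref{th4_linenplane}: with $\lambda$ fixed, transposing the term $\pi=\pi^*$ in $\sum_{\pi\in\Pk}\Pi_{\pi,\lambda}=q+1$ yields $\Pi_{\pi^*,\lambda}=q+1-\sum_{\pi\in\Pk\setminus\{\pi^*\}}\Pi_{\pi,\lambda}=\Ps_{\pi^*,\lambda}$. Symmetrically, \eqref{eq4_obtainLamb2} comes from the column-sum \eqref{eq4_lines_in_plane_sum}: with $\pi$ fixed, isolating $\lambda=\lambda^*$ in $\sum_{\lambda\in\Lk}\Lambda_{\lambda,\pi}=q^2+q+1$ yields $\Lambda_{\lambda^*,\pi}=\Ll_{\lambda^*,\pi}$.

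Since each step is a single rearrangement of an equation established earlier, there is essentially no obstacle to overcome; the only point requiring a word is the nonvanishing of the denominators in \eqref{eq4_obtainPi1}--\eqref{eq4_obtainLamb1}, which is handled above. I would instead emphasize the role of the corollary as a computational toolkit: \eqref{eq4_obtainPi1}--\eqref{eq4_obtainLamb1} convert freely between the row-count $\Pi_{\pi,\lambda}$ and the column-count $\Lambda_{\lambda,\pi}$ of a submatrix $\I^{\Pi\Lambda}_{\pi,\lambda}$, while \eqref{eq4_obtainPi2}--\eqref{eq4_obtainLamb2} recover a single missing entry of a row or of a column once all the others are in hand. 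These are precisely the moves used to complete the remaining entries of Tables~\ref{tab1} and \ref{tabUG}.
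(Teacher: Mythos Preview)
Your proof is correct and follows the same approach as the paper: the first two identities are read off from \eqref{eq4:Pi_aver}, and the last two from \eqref{eq4_planes_through_line_sum} and \eqref{eq4_lines_in_plane_sum}, exactly as you describe. The paper's own proof is a one-line reference to these three equations; your added remark on the nonvanishing of $\#\O_\lambda$ and $\#\N_\pi$ and the commentary on the corollary's use are harmless elaborations.
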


\begin{proof}
The assertions directly follow from \eqref{eq4:Pi_aver}, \eqref{eq4_planes_through_line_sum}, \eqref{eq4_lines_in_plane_sum}.
\end{proof}

\section{The numbers of $\pi$-planes through $\lambda$-lines and $\lambda$-lines in $\pi$-planes, for $\PG(3,q)$, $q\not\equiv0\pmod3$}\label{sec:results_q_ne0}

The values of $\#\N_\pi$, $\#\O_\lambda$, needed for  \eqref{eq4_obtainPi1}, \eqref{eq4_obtainLamb1}, are taken from \eqref{eq2_plane orbit_gen}--\eqref{eq2_classes line q=0mod3}. When we use \eqref{eq4_obtainPi2}, \eqref{eq4_obtainLamb2} the values $\Pi_{\pi,\lambda}$, $\Lambda_{\lambda,\pi}$ obtained above are summed~up.

\begin{thm}\label{th6:RC}For all $q$, the following holds:
\begin{description}
  \item[(i)] An $\RC$-line cannot lie in $\Gamma$-, $\overline{1_\C}$-, and $0_\C$-planes. Thus,
  \begin{align*}
    \Pi_{\pi,\RC}= \Lambda_{\RC,\pi}= 0,~\pi\in\{\Gamma,\overline{1_\C},0_\C\}.
  \end{align*}
  \item[(ii)] The number of\/ $2_\C$-planes and $3_\C$-planes through a real chord of $\C$ is equal to $2$ and $q-1$, respectively. Every $2_\C$-plane \emph{(}resp. $3_\C$-plane\emph{)} contains one \emph{(}resp. three\emph{)}  real chords of $\C$. Thus,
      \begin{align*}
        \Pi_{2_\C,\RC}=2,~ \Pi_{3_\C,\RC}=q-1,~\Lambda_{\RC,2_\C}=1,~\Lambda_{\RC,3_\C}=3.
      \end{align*}
\end{description}
 \end{thm}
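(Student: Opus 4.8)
The plan is to rely on one elementary observation: a real chord, by definition, passes through two \emph{distinct} real points of $\C$, so every plane containing it necessarily contains at least two points of $\C$. Combined with the fact that no four points of $\C$ are coplanar (and hence no plane meets $\C$ in more than three points), this pins down the type of every plane through a given chord, and the whole statement reduces to a counting argument.

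First I would prove part~(i) by checking that each of the three excluded plane types meets $\C$ in at most one point. For $0_\C$- and $\overline{1_\C}$-planes this is immediate from their definitions. For a $\Gamma$-plane $\pi_\t{osc}(t)=\boldsymbol{\pi}(1,-3t,3t^2,-t^3)$ I would substitute the generic curve point $P(s)=\Pf(s^3,s^2,s,1)$ into the plane equation \eqref{eq2_plane}, obtaining $s^3-3ts^2+3t^2s-t^3=(s-t)^3$; thus $P(s)\in\pi_\t{osc}(t)$ forces $s=t$, so an osculating plane touches $\C$ only at its point of contact. Since a real chord contains two points of $\C$, it can lie in none of these planes, so $\Lambda_{\RC,\pi}=0$ for $\pi\in\{\Gamma,\overline{1_\C},0_\C\}$, and the matching equalities $\Pi_{\pi,\RC}=0$ follow from Corollary~\ref{cor4_=0}.

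For part~(ii) I would fix a real chord $\ell=\overline{P(t_1)P(t_2)}$ and examine the $q+1$ planes through $\ell$. Each contains both $P(t_1)$ and $P(t_2)$, and by the no-four-coplanar property each meets $\C$ in exactly two or three points; hence every plane through $\ell$ is a $2_\C$- or a $3_\C$-plane. The crux is a bijection between the $3_\C$-planes through $\ell$ and the $q-1$ remaining points $P(t_3)$, $t_3\notin\{t_1,t_2\}$: the point $P(t_3)$ spans with $\ell$ a plane containing three points of $\C$, and distinct $t_3$ yield distinct planes precisely because four points of $\C$ are never coplanar. This gives $\Pi_{3_\C,\RC}=q-1$, whence $\Pi_{2_\C,\RC}=(q+1)-(q-1)=2$ for the remaining planes.

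It remains to count chords per plane. A $2_\C$-plane carries exactly two points of $\C$, so the unique real chord in it is the line joining them, giving $\Lambda_{\RC,2_\C}=1$; a $3_\C$-plane carries three points of $\C$, whose $\binom{3}{2}=3$ pairwise joins are exactly the real chords it contains, giving $\Lambda_{\RC,3_\C}=3$ (equivalently, both follow from \eqref{eq4_obtainLamb1} once the $\Pi$-values are in hand). The argument is essentially bookkeeping; the only step needing a genuine computation is the identity showing that osculating planes are tangent to $\C$, and I anticipate no obstacle beyond that.
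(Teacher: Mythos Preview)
Your proposal is correct and follows essentially the same route as the paper: both argue that a real chord meets $\C$ in two points while $\Gamma$-, $\overline{1_\C}$-, and $0_\C$-planes contain at most one, then count the $3_\C$-planes through a fixed chord via the $q-1$ remaining points of $\C$ and subtract from $q+1$ to get the $2_\C$-count. The only difference is cosmetic: you insert the explicit $(s-t)^3$ computation to verify that an osculating plane meets $\C$ only at its contact point, whereas the paper takes this as known; if you keep that computation, remember to treat the point $P(\infty)$ and the plane $\pi_\t{osc}(\infty)$ separately.
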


\begin{proof}
\begin{description}
  \item[(i)]
  An $\RC$-line contains two points of $\C$; it cannot lie in $\Gamma$-, $\overline{1_\C}$-, $0_\C$-planes as these planes have less than $2$ points in common with the cubic~$\C$.
  \item[(ii)]
 We consider the real chord  through points $K,Q$ of~$\C$.  Every plane through a real chord is either a $2_\C$-plane or a $3_\C$-plane. Each of the $q-1$ points $R$ of $\C\setminus\{K,Q\}$ gives rise to the $3_\C$-plane through $K,Q,R$. Therefore, $\Pi_{3_\C,\RC}=q-1$. By \eqref{eq4_obtainPi2}, $\Pi_{2_\C,\RC}=q+1-\Pi_{3_\C,\RC}=2$.

The assertions on $\Lambda_{\RC,\pi}$ follow from the definitions of the planes. \qedhere
\end{description}
\end{proof}

\begin{thm} \label{th6:Gamma-plane}
\begin{description}
  \item[(i)] For all $q$, a $\Gamma$-plane contains one tangent and $q$ non-tangent unisecants. The tangent and non-tangent unisecants lying in a $\Gamma$-plane do not lie in other $\Gamma$-planes. Thus,
      \begin{align*}
       \Lambda_{\Tr,\Gamma}=1,~\Lambda_{\UG,\Gamma}=q,~\Pi_{\Gamma,\Tr}=\Pi_{\Gamma,\UG}=1.
      \end{align*}

  \item[(ii)] Let $q\not\equiv 0\pmod3$.  Then a $\Gamma$-plane contains
$q$ real axes and $q^2-q$ $\EG$-lines. The  external lines lying in a $\Gamma$-plane do not lie in other $\Gamma$-planes.
Also, there are two $\Gamma$-planes through a real axis of $\Gamma$. Thus,
\begin{align*}
\Lambda^{(\ne0)}_{\RA,\Gamma}=q,~\Lambda^{(\ne0)}_{\EG,\Gamma}=q^2-q,~
\Pi^{(\ne0)}_{\Gamma,\EG}=1,~\Pi^{(\ne0)}_{\Gamma,\RA}=2.
\end{align*}

  \item[(iii)] For all $q$, a $\Gamma$-plane does not contain $\IA$-, $\UnG$-, $\EnG$-, and $\IC$-lines. Thus,
  \begin{align*}
  &\Lambda_{\lambda,\Gamma}=\Pi_{\Gamma ,\lambda}=0,~\lambda\in\{\IA,\UnG,\EnG,\IC\}.
      \end{align*}
 \end{description}
\end{thm}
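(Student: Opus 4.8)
The strategy is to count, for a single fixed $\Gamma$-plane, how many lines of each type $\lambda$ it contains, using the geometry of the osculating developable together with the partition of lines in a plane by their intersection with $\C$; the dual counts $\Pi_{\Gamma,\lambda}$ then follow either from the symmetry of the orbit action (all $\Gamma$-planes are equivalent under $G_q$) or from the relation \eqref{eq4_obtainPi2}. A $\Gamma$-plane $\pi_\t{osc}(t)$ is an osculating plane, hence by definition meets $\C$ in exactly one point $P(t)$ (with contact of order three). Since a plane in $\PG(3,q)$ contains $\theta_{2,q}=q^2+q+1$ lines, and since $\pi_\t{osc}(t)$ is a $1_\C$-plane of the special osculating type, I would sort its lines according to whether they pass through $P(t)$ (the unisecants in the plane) or miss $P(t)$ (the external lines in the plane), and I would identify the unique tangent among the unisecants.

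For part \textbf{(i)}, the key geometric fact is that the tangent line to $\C$ at $P(t)$ lies in the osculating plane $\pi_\t{osc}(t)$, giving the single $\Tr$-line; the remaining $q$ unisecants through $P(t)$ in this plane are then non-tangent, i.e.\ $\UG$-lines by definition, so $\Lambda_{\Tr,\Gamma}=1$ and $\Lambda_{\UG,\Gamma}=q$. To see that a tangent or a $\UG$-line lies in only one $\Gamma$-plane, I would use Theorem~\ref{th2_Hirs}(v): the tangent at $P(t)$ is an axis of $\Gamma$ of the coincident (tangent) type, and a $\UG$-line, lying in a $\Gamma$-plane, cannot lie in a second one because two distinct $\Gamma$-planes meet in an axis, which is never a unisecant other than a tangent. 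Hence $\Pi_{\Gamma,\Tr}=\Pi_{\Gamma,\UG}=1$.

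For part \textbf{(ii)} with $q\not\equiv0\pmod3$, I would count the external lines of $\pi_\t{osc}(t)$, of which there are $q^2+q+1-(q+1)=q^2$, and split them into real axes of $\Gamma$ versus $\EG$-lines. Here Theorem~\ref{th2_Hirs}(v)(b) is decisive: every plane not in $\Gamma$ contains exactly one axis, but a $\Gamma$-plane is special, and the count of axes lying in a $\Gamma$-plane must be obtained from the pencil/cubic-developable structure of $\Gamma$ — a $\Gamma$-plane meets each of the other $q$ $\Gamma$-planes in a real axis, giving $\Lambda^{(\ne0)}_{\RA,\Gamma}=q$, whence $\Lambda^{(\ne0)}_{\EG,\Gamma}=q^2-q$. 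The dual count $\Pi^{(\ne0)}_{\Gamma,\RA}=2$ is immediate from the definition of a real axis as the intersection of a pair of real $\Gamma$-planes, and $\Pi^{(\ne0)}_{\Gamma,\EG}=1$ follows from the fact (as in part (i)) that an $\EG$-line lies in exactly one $\Gamma$-plane, or alternatively by applying Corollary~\ref{cor4} to the already-established $\Lambda^{(\ne0)}_{\EG,\Gamma}$. Part \textbf{(iii)} is the cleanest: an $\IA$- or $\IC$-line is by definition imaginary (intersection/join of complex conjugate objects) and cannot lie in the real plane $\pi_\t{osc}(t)$ in the required way, while $\UnG$- and $\EnG$-lines are by their very definitions \emph{not} contained in any $\Gamma$-plane, so $\Lambda_{\lambda,\Gamma}=0$ and hence $\Pi_{\Gamma,\lambda}=0$ by Corollary~\ref{cor4_=0}.

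The main obstacle I anticipate is the precise bookkeeping of axes in part (ii): one must be sure that the $q$ lines obtained as intersections of $\pi_\t{osc}(t)$ with the other $q$ $\Gamma$-planes are genuinely distinct real axes and account for \emph{all} axes in the plane, which requires care about whether the developable is a pencil (excluded here since $\xi\neq0$) versus a cubic developable, and whether any two of these intersection lines could coincide. Confirming that exactly $q$ distinct real axes arise — no more from imaginary pairs landing in the real plane, and no collisions — is where the cubic-developable geometry of Theorem~\ref{th2_Hirs} must be invoked most carefully.
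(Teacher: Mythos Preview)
Your approach for parts (i) and (ii) is essentially the paper's: count the $q+1$ unisecants through the contact point and identify the unique tangent; then among the remaining $q^2$ external lines split off the $q$ real axes as the intersections with the other $q$ osculating planes (distinct because, for $\xi\ne0$, $\Gamma$ is a cubic developable and no three of its planes share a line), leaving $q^2-q$ $\EG$-lines. The distinctness concern you flag is exactly the point the paper addresses in the same way.

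There is, however, a genuine gap in your treatment of part (iii) for $\IC$- and $\IA$-lines. The claim that these lines ``are imaginary and cannot lie in the real plane $\pi_\t{osc}(t)$ in the required way'' is not a valid argument: imaginary chords and imaginary axes \emph{are} lines of $\PG(3,q)$. The adjective ``imaginary'' only records that they arise as the join (resp.\ meet) of a complex-conjugate pair of points (resp.\ planes); the resulting line is Frobenius-fixed and hence defined over $\F_q$, so nothing \emph{a priori} prevents it from lying in a real plane --- indeed, Theorem~\ref{th6:imag_chord} shows every $\IC$-line lies in $q+1$ perfectly real $\overline{1_\C}$-planes. The paper's argument for $\IC$ is concrete and different from yours: if an imaginary chord lay in a $\Gamma$-plane it would meet the tangent contained in that plane, contradicting Theorem~\ref{th2_Hirs}(v)(a) (no two chords of $\C$ meet off $\C$). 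For $\IA$ (which exists only when $\xi\ne0$) one can argue dually via the null polarity $\A$, or directly observe that an $\IA$-line in a real $\Gamma$-plane would give three planes of $\Gamma$ through a common line, impossible for a cubic developable. Your handling of $\UnG$ and $\EnG$ by definition is fine.
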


\begin{proof}
\begin{description}
  \item[(i)] The assertions follow from the definitions of the lines. In total, we have $q+1$ unisecants in the contact point of a $\Gamma$-plane. One of these unisecants is a tangent, the other ones are $\UG$-lines. Also, the intersection of two $\Gamma$-planes is a real axis. By Theorem~\ref{th2_Hirs}(iv), the unisecants and real axes are distinct non-intersecting classes of lines.

  \item[(ii)]  In total there are $q+1$ $\Gamma$-planes. For $q\not\equiv 0\pmod3$, each $\Gamma$-plane intersects the remaining $\Gamma$-planes by distinct lines that provide $q$ real axes in it. Thus, for any $\Gamma$-plane, we have $\Lambda_{\Tr,\Gamma}+\Lambda_{\UG,\Gamma}+\Lambda^{(\ne0)}_{\RA,\Gamma}=1+q+q$. The remaining $\beta_{2,q}-(2q+1)=q^2-q$ lines in the $\Gamma$-plane are $\EG$-lines. The intersection of two $\Gamma$-planes is a real axis; this provides $\Pi^{(\ne0)}_{\Gamma,\EG}=1$ and $\Pi^{(\ne0)}_{\Gamma,\RA}=2$.

  \item[(iii)]  The assertions with respect to $\IA$-, $\UnG$- and $\EnG$-lines follow form the definitions of the lines. Also, if an $\IC$-line lies in a $\Gamma$-plane then the line intersects the tangent belonging to this plane; contradiction, as by Theorem \ref{th2_Hirs}(v) no two chords of $\C$ meet off $\C$. \qedhere
\end{description}
\end{proof}

\begin{thm}\label{th6:imag_chord}
For all $q$, the following holds.
 All $d_\C$-planes, $d=0,2,3$, and all $\Gamma$-planes contain no the imaginary chords.
All $q+1$ planes through an imaginary chord are $\overline{1_\C}$-planes forming a pencil. The $\binom{q}{2}(q+1)$-orbit of all  $\overline{1_\C}$-planes can be partitioned into $\binom{q}{2}$ pencils of planes having an imaginary chord as axis. So,
\begin{align*}
\Lambda_{\IC,\pi}=\Pi_{\pi,\IC}=0,~\pi\in\{\Gamma,2_\C,3_\C,0_\C\},~\Lambda_{\IC,\overline{1_\C}}=1,~\Pi_{\overline{1_\C},\IC}=q+1.
\end{align*}
\end{thm}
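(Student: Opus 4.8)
The plan is to work in the quadratic extension and exploit the action of Frobenius. By definition an $\IC$-line $\ell$ is a real line of $\PG(3,q)$ containing no real point of $\C$, but over $\F_{q^2}$ it passes through a pair of complex conjugate points $\Pf,\overline{\Pf}$ of the extended cubic $\C(\F_{q^2})$. First I would fix such an $\ell$ and analyse an arbitrary real plane $\pi$ through it. Since $\C$ has degree $3$, the plane $\pi$ meets $\C(\F_{q^2})$ in three points counted with multiplicity; it already contains the conjugate pair $\Pf,\overline{\Pf}$, so the remaining intersection is a single point $R$. As both $\pi$ and $\C$ are defined over $\F_q$, this intersection is Frobenius-invariant, and since the pair $\{\Pf,\overline{\Pf}\}$ is itself invariant, $R$ must be fixed by conjugation, i.e. $R\in\C$ is a real point. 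Moreover a tangency at $\Pf$ would force, by conjugation, a matching tangency at $\overline{\Pf}$ and hence total degree $\ge 4$; so the contact at $\Pf,\overline{\Pf}$ is simple and $R$ is a simple real point with $R\neq\Pf,\overline{\Pf}$. Consequently every plane through $\ell$ meets $\C$ in exactly one real point, and it cannot be an osculating plane, since an osculating plane meets $\C(\F_{q^2})$ in a single point with multiplicity $3$ and hence cannot contain the two distinct points $\Pf,\overline{\Pf}$. Thus every plane through $\ell$ is a $\overline{1_\C}$-plane.

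Because any line of $\PG(3,q)$ lies in exactly $q+1$ planes and these form a pencil, all $q+1$ planes through $\ell$ are $\overline{1_\C}$-planes forming a pencil with axis $\ell$. This immediately yields $\Pi_{\overline{1_\C},\IC}=q+1$ and $\Pi_{\pi,\IC}=0$ for $\pi\in\{\Gamma,2_\C,3_\C,0_\C\}$; by Corollary~\ref{cor4_=0} the corresponding values $\Lambda_{\IC,\pi}$ also vanish. For the remaining value I would apply the counting relation \eqref{eq4_obtainLamb1} of Corollary~\ref{cor4} with the cardinalities $\#\O_\IC=(q^2-q)/2$ and $\#\N_{\overline{1_\C}}=(q^3-q)/2$, obtaining
\begin{align*}
\Lambda_{\IC,\overline{1_\C}}=\frac{\Pi_{\overline{1_\C},\IC}\cdot\#\O_\IC}{\#\N_{\overline{1_\C}}}=\frac{(q+1)(q^2-q)/2}{(q^3-q)/2}=1,
\end{align*}
so each $\overline{1_\C}$-plane contains exactly one imaginary chord.

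Finally, the partition statement follows by a disjointness-and-counting argument. Each of the $\#\O_\IC=\binom{q}{2}$ imaginary chords determines a pencil of $q+1$ $\overline{1_\C}$-planes. Since $\Lambda_{\IC,\overline{1_\C}}=1$, no $\overline{1_\C}$-plane can contain two distinct imaginary chords, so these $\binom{q}{2}$ pencils are pairwise disjoint; as they account for $\binom{q}{2}(q+1)=(q^3-q)/2=\#\N_{\overline{1_\C}}$ planes in total, they partition the entire orbit of $\overline{1_\C}$-planes. The step I expect to be the main obstacle is the divisor argument over $\F_{q^2}$, specifically justifying that the third intersection point is a \emph{simple} real point of $\C$ and cleanly excluding the osculating (that is, $\Gamma$-plane) case; once that geometric fact is secured, all the numerical values and the pencil partition are routine consequences of the relations established in Section~\ref{sec:useful}.
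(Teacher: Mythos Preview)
Your proof is correct but follows a genuinely different route from the paper. The paper's argument stays entirely over $\F_q$ and hinges on Theorem~\ref{th2_Hirs}(v)(a): any $2_\C$- or $3_\C$-plane contains a real chord and any $\Gamma$-plane contains a tangent, so an imaginary chord lying in such a plane would meet another chord off $\C$, which is forbidden. The $0_\C$-case is then handled by a counting remark: the $q+1$ planes through the imaginary chord and the $q+1$ points of $\C$ are distinct $\overline{1_\C}$-planes, hence exhaust the pencil. Your approach instead passes to $\F_{q^2}$ and analyses the degree-$3$ intersection directly: the conjugate pair forces the third root of the cubic in $t$ to be rational and simple, so every plane through $\ell$ hits $\C$ in exactly one real point, and the osculating case is excluded by the triple-contact characterisation of $\Gamma$-planes. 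The paper's route is shorter because it quotes the ``no two chords meet off $\C$'' fact as a black box; yours is more self-contained and handles all four excluded plane types (including $0_\C$) in a single stroke, at the cost of the extension-field machinery. Your worry about the divisor argument is unfounded: for a real plane the parameter equation has coefficients in $\F_q$, so a repeated root at $\alpha\in\F_{q^2}\setminus\F_q$ would force one at $\alpha^q$ as well, giving degree $\ge4$; this is exactly the step you sketched, and it is sound.
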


\begin{proof}
    Any $2_\C$-plane and $3_\C$-plane contains a real chord. An osculating plane contains a tangent. If a $2_\C$,- or a $3_\C$-, or a $\Gamma$-plane contains an imaginary chord then it intersects the real chord or the tangent; contradiction, as by Theorem \ref{th2_Hirs}(v) no two chords of $\C$ meet off $\C$. Thus, we have a $\overline{1_\C}$-plane through an imaginary chord and any point of $\C$. In total, there are $\#\C=q+1$ such $\overline{1_\C}$-planes for every imaginary chord. Also, by Theorem \ref{th2_Hirs}(iv)(a), $\#\O_\IC=\binom{q}{2}$.
\end{proof}

\begin{thm}\label{th6:0C-UGamma}
For all $q$, a $\UG$- and $\UnG$-line cannot lie in a $0_\C$-plane, i.e.
  $$\Lambda_{\UG,0_\C}=\Pi_{0_\C,\UG}=
  \Lambda_{\UnG,0_\C}=\Pi_{0_\C,\UnG}=0.$$
\end{thm}

\begin{proof} A $\UG$- or $\UnG$-line have one point common with $\C$ whereas a $0_\C$-plane has no such points.
\end{proof}

\begin{thm}\label{th6:tang_unisec}
For all $q$, the following holds:
\begin{description}
  \item[(i)]
We consider a real chord $\R\CC$ and two $\Gamma$-planes in its touch points. Every  $2_\C$-plane through $\R\CC$ intersects one  of these $\Gamma$-planes in its tangent and another  in a non-tangent unisecant. Thus,
\begin{align*}
  & \Lambda_{\Tr,2_\C}=\Lambda_{\UG,2_\C}=1.
  \end{align*}
   Also,
   \begin{align*}
   &\Pi_{2_\C,\UG}=\Pi_{2_\C,\EG}=\Lambda_{\UG,\overline{1_\C}}=1,~\Pi_{\overline{1_\C},\UG}=\Pi_{3_\C,\UG}=
(q-1)/2,\db\\
&\Lambda_{\UG,3_\C}=3,~\Lambda_{\UnG,2_\C}=2(q-1),~\Pi_{2_\C,\UnG}=2,~ \Lambda_{\EG,2_\C}=q-1.
\end{align*}

\item[(ii)]
Through a tangent, there are  $q$  $2_\C$-planes. Also, a tangent cannot lie in a $3_\C$- and $0_\C$-plane; we have no $\overline{1_\C}$-planes through a tangent. Thus,
\begin{align*}
   \Pi_{2_\C,\Tr}=q,~\Lambda_{\Tr,\pi}=\Pi_{\pi,\Tr}=0,~\pi\in\{3_\C,\overline{1_\C},0_\C\}.
\end{align*}
\end{description}
\end{thm}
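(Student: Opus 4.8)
The plan is to reduce everything to one convenient $2_\C$-plane and then pass from per-plane line counts to per-line plane counts through the averaging identity \eqref{eq4_obtainPi1}. By the triple transitivity of $G_q$ on $\C$ (Theorem~\ref{th2_Hirs}(i)) I may take the real chord to be $\overline{P(0)P(\infty)}$, so that the two $\Gamma$-planes at its touch points are $\pi_{\text{osc}}(0)=\boldsymbol{\pi}(1,0,0,0)$ and $\pi_{\text{osc}}(\infty)=\boldsymbol{\pi}(0,0,0,1)$, while the two planes through the chord having a double contact with $\C$ are $\sigma=\boldsymbol{\pi}(0,1,0,0)$ (tangent at $P(0)$) and $\boldsymbol{\pi}(0,0,1,0)$ (tangent at $P(\infty)$); these are exactly the two $2_\C$-planes of Theorem~\ref{th6:RC}(ii). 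Since $\N_{2_\C}$ is a single orbit (Theorem~\ref{th2_Hirs}(ii)), it suffices to analyse $\sigma$. A plane through the chord meets $\C$ in three points with multiplicity, two of which are $P(0),P(\infty)$, so for a $2_\C$-plane the third coincides with one of them; hence $\sigma$ is tangent at exactly one touch point. The tangent direction there lies in $\sigma$, so $\sigma\cap\pi_{\text{osc}}(0)$ is the tangent at $P(0)$, whereas $\sigma\cap\pi_{\text{osc}}(\infty)$ is a line through the transversal point $P(\infty)$ in an osculating plane that is not the tangent, i.e.\ a $\UG$-line. Uniqueness (any tangent in $\sigma$ touches at a $\C$-point of $\sigma$, and the tangent at $P(\infty)$ is not in $\sigma$; any $\UG$-line in $\sigma$ passes through $P(0)$ or $P(\infty)$ and lies in the unique osculating plane there, hence is determined) gives $\Lambda_{\Tr,2_\C}=\Lambda_{\UG,2_\C}=1$.

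I would then read off the $\Pi$-values from Corollary~\ref{cor4}. Relation \eqref{eq4_obtainPi1} with $\#\N_{2_\C}=\#\O_\UG=q^2+q$ yields $\Pi_{2_\C,\UG}=1$. Combined with $\Pi_{\Gamma,\UG}=1$ (Theorem~\ref{th6:Gamma-plane}) and $\Pi_{0_\C,\UG}=0$ (Theorem~\ref{th6:0C-UGamma}), the sum \eqref{eq4_planes_through_line_sum} shows $\Pi_{3_\C,\UG}+\Pi_{\overline{1_\C},\UG}=q-1$. To split this I compute the per-plane counts directly, uniformly in the parity of $q$: in a $3_\C$-plane the only candidate $\UG$-lines pass through its three $\C$-points and equal the intersections with the three osculating planes there, each a non-tangent unisecant since the plane is transversal (not tangent) at each point, so $\Lambda_{\UG,3_\C}=3$; in a $\overline{1_\C}$-plane the single $\C$-point gives one such intersection, which is again a $\UG$-line because a tangency there would force a triple contact and hence an osculating plane, contradicting the $\overline{1_\C}$-type, so $\Lambda_{\UG,\overline{1_\C}}=1$. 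Feeding these into \eqref{eq4_obtainPi1} produces $\Pi_{3_\C,\UG}=\Pi_{\overline{1_\C},\UG}=(q-1)/2$, consistent with the sum $q-1$.

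For the $\UnG$- and $\EG$-entries I count lines inside $\sigma$. Through $P(0)$ the $q+1$ lines of $\sigma$ are the chord, the tangent, and $q-1$ non-tangent unisecants; none of the latter lies in an osculating plane (the only one through $P(0)$ is $\pi_{\text{osc}}(0)$, meeting $\sigma$ in the tangent), so they are $\UnG$-lines. Through $P(\infty)$ the $q+1$ lines are the chord, the single $\UG$-line $\sigma\cap\pi_{\text{osc}}(\infty)$, and $q-1$ further non-tangent unisecants that are again $\UnG$-lines (the tangent at $P(\infty)$ is not contained in $\sigma$). This gives $\Lambda_{\UnG,2_\C}=2(q-1)$, whence $\Pi_{2_\C,\UnG}=2$ by \eqref{eq4_obtainPi1}. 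For $\EG$, every external line of $\sigma$ lying in an osculating plane equals $\sigma\cap\pi_{\text{osc}}(s)$; for $s\neq 0,\infty$ these $q-1$ lines avoid $P(0),P(\infty)$ and so are external, and they are pairwise distinct because $s=s^3/s^2$ is recovered from the line's equation $x_0+3s^2x_2-s^3x_3=0$ on $\sigma$, so no two lie in a common pair of osculating planes and none is a real axis; as they exhaust the external lines of $\sigma$ in osculating planes, $\Lambda_{\EG,2_\C}=q-1$ and $\Pi_{2_\C,\EG}=1$ by \eqref{eq4_obtainPi1}.

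Part (ii) is the easiest: every plane through the tangent at a point $P$ of $\C$ meets $\C$ at $P$ with multiplicity at least two. The osculating plane is the unique plane with a triple contact and the only $\Gamma$-plane through the tangent; each of the remaining $q$ planes has a double contact at $P$ and one further simple $\C$-point, hence is a $2_\C$-plane, giving $\Pi_{2_\C,\Tr}=q$. A $3_\C$-plane (three simple contacts) and a $0_\C$-plane (no $\C$-point, whereas $P$ lies on the tangent) are impossible, and a $\overline{1_\C}$-plane through the tangent would have its unique $\C$-point of multiplicity three, i.e.\ be an osculating $\Gamma$-plane, a contradiction; thus $\Pi_{\pi,\Tr}=0$ and, by Corollary~\ref{cor4_=0}, $\Lambda_{\Tr,\pi}=0$ for $\pi\in\{3_\C,\overline{1_\C},0_\C\}$. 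The main obstacle I anticipate is the $\EG$-count in $\sigma$: one must argue carefully that the $q-1$ intersections $\sigma\cap\pi_{\text{osc}}(s)$ are distinct genuine $\EG$-lines rather than real axes, and that they are precisely the external lines of $\sigma$ contained in osculating planes; the remainder is bookkeeping with \eqref{eq4_obtainPi1}.
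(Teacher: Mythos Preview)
Your argument is correct, but it proceeds along a noticeably different track from the paper's. The paper stays synthetic throughout: it never fixes coordinates, and for $\Lambda_{\UG,3_\C}=3$ it runs a double count over the three real chords in a $3_\C$-plane (each chord contributing two $\UG$-intersections, each $\UG$-line counted twice), then derives $\Pi_{3_\C,\UG}$ from \eqref{eq4_obtainPi1}, obtains $\Pi_{\overline{1_\C},\UG}$ from the residual sum \eqref{eq4_obtainPi2}, and only then recovers $\Lambda_{\UG,\overline{1_\C}}=1$ via \eqref{eq4_obtainLamb1}; you instead count $\UG$-lines directly inside a $3_\C$- and a $\overline{1_\C}$-plane and read off both $\Pi$-values from \eqref{eq4_obtainPi1}, which is a cleaner and parity-uniform route. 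For the $\EG$-count the paper invokes Theorem~\ref{th2_Hirs}(v)(b) (each non-$\Gamma$ plane carries a unique axis, here the tangent already present in the $2_\C$-plane) to rule out further axes among the $q-1$ intersections; you instead recover $s$ from the trace of $\pi_{\t{osc}}(s)$ on $\sigma$, which is more computational but self-contained and avoids appealing to the axis-uniqueness statement. In part~(ii) the paper cites Theorem~\ref{th2_Hirs}(vi) to exclude $3_\C$-planes and counts the $q$ real chords through $P$ to produce the $q$ $2_\C$-planes, whereas you argue via intersection multiplicity that any non-osculating plane through the tangent has a double contact at $P$ plus one further simple point, hence is $2_\C$; both are valid, and your version makes the exclusion of $\overline{1_\C}$-planes explicit rather than leaving it to \eqref{eq4_obtainPi2}. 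The only place to tighten is your $\EG$ paragraph: you should also note (using Theorem~\ref{th6:Gamma-plane}(iii), already available) that the $q-1$ intersection lines cannot be imaginary axes, so that they genuinely fall into $\O_\EG$ rather than $\O_\IA$.
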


\begin{proof}
\begin{description}
  \item[(i)] Let $P_1,P_2$ be the intersections points of $\R\CC$ and $\C$ and let $\T_1,\T_2$ and $\Gamma_1,\Gamma_2$ be the corresponding tangents and $\Gamma$-planes. The plane $\pi_1$ through $\T_1$ and $\R\CC$ is a $2_\C$-plane due to Theorem \ref{th2_Hirs}(vi); it intersects $\Gamma_2$ by a unisecant $\U_2$. If $\U_2$ is $\T_2$ then $\T_1$ meets $\T_2$ off $\C$, contradiction, see Theorem \ref{th2_Hirs}(v). Thus, $\U_2$ is a non-tangent unisecant, i.e. an $\UG$-line. Similar case holds for the plane $\pi_2$ through $\T_2$ and $\R\CC$. This gives $\Lambda_{\Tr,2_\C}=\Lambda_{\UG,2_\C}=1$.

      Each of $q$ real chords touched in  $P_1$ gives one $2_\C$-plane through some $\UG$-line also touched in $P_1$; distinct chords give $2_\C$-planes
for distinct $\UG$-lines, i.e. we have $\Pi_{2_\C,\UG}=1$ as in a $\Gamma$-plane there are $q$ $\UG$-lines.

 By Theorem \ref{th2_Hirs}(v), a $3_\C$-plane through $\R\CC$ cannot meet a $\Gamma$-plane in a tangent; it intersects $\Gamma_1$ and $\Gamma_2$ in $\UG$-lines. Clearly, these $\UG$-lines do not coincide with intersection lines of other planes through  $\R\CC$. In total, a $3_\C$-plane contains 3 real chords, see Theorem \ref{th6:RC}(ii); formally this gives (with repetitions) $2\cdot3=6$ $\UG$-lines in a $3_\C$-plane. Each of these six $\UG$-lines is counted twice. So, $\Lambda_{\UG,3_\C}=3.$ By \eqref{eq4_obtainPi1}, we obtain $\Pi_{3_\C,\UG}=\P_{3_\C,\UG}=(q-1)/2$.
    By \eqref{eq4_obtainPi2}, we have $\Pi_{\overline{1_\C},\UG}=\Ps_{\overline{1_\C},\UG}=(q-1)/2$.  Now, by \eqref{eq4_obtainLamb1},  $\Lambda_{\UG,\overline{1_\C}}=\L_{\UG,\overline{1_\C}}=1$.

 We consider the $2_\C$-plane $\pi_1$.
Through each of the two touch points $P_1,P_2$ of $\pi_1$ we have  $q$ unisecants of $\C$ lying in $\pi_1$. By above, one of these $2q$ unisecants is the tangent $\T_1\in\Gamma_1$ whereas another is  an $\UG$-line $\U_2\in\Gamma_2$; the other $2q-2$ unisecants are $\UnG$-lines. So,  $\Lambda_{\UnG,2_\C}=2(q-1)$. Now, by \eqref{eq4_obtainPi1}, we obtain $\Pi_{2_\C,\UnG}=\P_{2_\C,\UnG}=2$.

The $2_\C$-plane $\pi_1$ intersects also all the $q-1$ $\Gamma$-planes of $\Gamma\setminus\{\Gamma_1,\Gamma_2\}$. An intersection line is not a unisecant, or an axis, or a chord. Really, we considered above all the unisecants of $\pi_1$. By  Theorem \ref{th2_Hirs}(v), every plane not in $\Gamma$ contains exactly one axis of~$\Gamma$. As $\pi_1$ contains the tangent $\T_1\in\Gamma_1$, it cannot have another axis. Similarly, if an intersection line is  a chord, it intersects $\T_1$, contradiction, see  Theorem \ref{th2_Hirs}(v).  So, all the $q-1$ intersection lines are external lines in $\Gamma$-planes other than chords. Thus,  $\Lambda_{\EG,2_\C}=q-1$. Now, by  \eqref{eq4_obtainPi1}, we obtain $\Pi_{2_\C,\EG}=\P_{2_\C,\EG}=1$.

  \item[(ii)]
We have $q$ real chords through a point $P$ of $\C$.
Every real chord gives one $2_\C$-plane through the tangent in $P$, see the case (i).  In total, we have $q$ $2_\C$-planes through the tangent.

A tangent intersects $\C$ in one point; it cannot lie in a $0_\C$-plane having no points common with $\C$. Also, by Theorem  \ref{th2_Hirs}(vi), every plane through a tangent meets
$\C$ in at most one point other than the point of contact; therefore, $\Pi_{3_\C,\Tr}=0$.
 Finally, by \eqref{eq4_obtainPi2}, we have $\Pi_{\overline{1_\C},\Tr}=\Ps_{\overline{1_\C},\Tr}$=0.\qedhere
\end{description}
\end{proof}

\begin{thm}\label{th6:2C}
 For all $q$, the following holds:
$$\Lambda_{\RA,2_\C}=\Pi_{2_\C,\RA}=
  \Lambda_{\IA,2_\C}=\Pi_{2_\C,\IA}=0,~\Lambda_{\t{\emph{En}}\Gamma,2_\C}=(q-1)^2,~ \Pi_{2_\C,\t{\emph{En}}\Gamma}=1.$$
\end{thm}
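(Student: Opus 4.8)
The plan is to separate the statement into a geometric part (vanishing of the axis counts) and an arithmetic part (the $\EnG$ counts), drawing on the line inventory of a $2_\C$-plane already assembled in Theorems~\ref{th6:RC}, \ref{th6:imag_chord}, and~\ref{th6:tang_unisec}. Throughout I work in the setting of this section, $q\not\equiv0\pmod3$, which is where real and imaginary axes exist.

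First I would determine the axis contained in a $2_\C$-plane. A $2_\C$-plane is not an osculating plane, so by Theorem~\ref{th2_Hirs}(v)(b) it contains \emph{exactly one} axis of $\Gamma$. On the other hand, such a plane meets $\C$ in two real points $P_1,P_2$, and the argument in the proof of Theorem~\ref{th6:tang_unisec}(i) shows that each of the two $2_\C$-planes through the real chord $\overline{P_1P_2}$ contains the tangent at one of its two touch points. Since a tangent is itself an axis (the degenerate case of coincident osculating planes), it \emph{is} the unique axis of the plane. Hence no real axis and no imaginary axis can lie in a $2_\C$-plane, i.e. $\Lambda_{\RA,2_\C}=\Lambda_{\IA,2_\C}=0$; Corollary~\ref{cor4_=0} then gives $\Pi_{2_\C,\RA}=\Pi_{2_\C,\IA}=0$.

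Next I would obtain $\Lambda_{\EnG,2_\C}$ by elimination, using that a plane of $\PG(3,q)$ contains $\beta_{2,q}=q^2+q+1$ lines. Applying \eqref{eq4_obtainLamb2} with $\pi=2_\C$ and $\lambda^{*}=\EnG$, I subtract the known counts $\Lambda_{\RC,2_\C}=1$ (Theorem~\ref{th6:RC}), $\Lambda_{\Tr,2_\C}=\Lambda_{\UG,2_\C}=1$, $\Lambda_{\UnG,2_\C}=2(q-1)$, $\Lambda_{\EG,2_\C}=q-1$ (Theorem~\ref{th6:tang_unisec}), and $\Lambda_{\IC,2_\C}=0$ (Theorem~\ref{th6:imag_chord}), together with the freshly established $\Lambda_{\RA,2_\C}=\Lambda_{\IA,2_\C}=0$. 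These sum to $3q$, whence $\Lambda_{\EnG,2_\C}=q^2+q+1-3q=(q-1)^2$.

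Finally I would pass from the line count to the plane count through the incidence-balance relation \eqref{eq4_obtainPi1}, inserting $\#\N_{2_\C}=q^2+q$ and $\#\O_{\EnG}=(q^2-q)(q^2-1)$ from Theorem~\ref{th2_Hirs}; the factors cancel to give $\Pi_{2_\C,\EnG}=\frac{(q-1)^2(q^2+q)}{(q^2-q)(q^2-1)}=1$. The only genuinely geometric step is the first one---recognising that the unique axis of a $2_\C$-plane is its tangent---and this is essentially already contained in the earlier $2_\C$-plane analysis, so I expect it to be the main (though mild) obstacle; the remaining two paragraphs are routine bookkeeping once the line-type inventory of a $2_\C$-plane is complete.
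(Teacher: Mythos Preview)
Your argument is correct and essentially identical to the paper's: both use that a non-$\Gamma$ plane contains a unique axis (Theorem~\ref{th2_Hirs}(v)(b)), observe that the tangent already present in a $2_\C$-plane (Theorem~\ref{th6:tang_unisec}(i)) accounts for this axis, conclude $\Lambda_{\RA,2_\C}=\Lambda_{\IA,2_\C}=0$, and then obtain $\Lambda_{\EnG,2_\C}=(q-1)^2$ and $\Pi_{2_\C,\EnG}=1$ via~\eqref{eq4_obtainLamb2} and~\eqref{eq4_obtainPi1}. Your write-up is somewhat more explicit in naming the tangent as the degenerate axis, but the logic and the cited ingredients match the paper's proof step for step.
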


\begin{proof} By Theorem \ref{th2_Hirs}(v), every plane not in $\Gamma$ contains exactly one axis of~$\Gamma$. By Theorem \ref{th6:tang_unisec}(i), $\Lambda_{\Tr,2_\C}=1$, i.e. a $2_\C$ plane contains a tangent. Therefore, a $2_\C$ plane cannot contain $\RA$- and $\IA$-lines. Now,
  by \eqref{eq4_obtainLamb2}, we have
      $\Lambda_{\EnG,2_\C}=\Ll_{\EnG,2_\C}=(q-1)^2.$
      Finally, by \eqref{eq4_obtainPi1}, we obtain $\Pi_{2_\C,\EnG}=\P_{2_\C,\EnG}=1$.
\end{proof}

\begin{thm}\label{th6:LambdaPi_UnG}
    For all $q$, the following holds:
     $$\Lambda_{\UnG,3_\C}=3(q-2),~\Pi_{3_\C,\UnG}=(q-2)/2,~ \Lambda_{\UnG,\overline{1_\C}}=q,~\Pi_{\overline{1_\C},\UnG}=q/2.$$
 \end{thm}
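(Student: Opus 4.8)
The plan is to count $\UnG$-lines directly inside a $3_\C$-plane and a $\overline{1_\C}$-plane, using that every unisecant of $\C$ is either a tangent, a $\UG$-line, or a $\UnG$-line, and that the tangent and $\UG$ contributions to these two plane types are already pinned down by Theorem~\ref{th6:tang_unisec}. Once the $\Lambda$-values are found, the corresponding $\Pi$-values will follow formally from the averaging identity \eqref{eq4_obtainPi1}.

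First I would treat a $3_\C$-plane with cubic points $K,Q,R$. Through each of these points the plane carries $q+1$ lines, exactly two of which are the real chords to the other two points; the remaining $q-1$ meet $\C$ only at that point and hence are unisecants. Since no unisecant can pass through two of $K,Q,R$ (that would make it a chord), the total number of unisecants in the plane is $3(q-1)$. Each of these is a tangent, a $\UG$-line, or a $\UnG$-line. By Theorem~\ref{th6:tang_unisec}(ii) a $3_\C$-plane contains no tangent, and by Theorem~\ref{th6:tang_unisec}(i) it contains exactly three $\UG$-lines, so $\Lambda_{\UnG,3_\C}=3(q-1)-3=3(q-2)$. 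Feeding this into \eqref{eq4_obtainPi1} with $\#\N_{3_\C}=(q^3-q)/6$ and $\#\O_\UnG=q^3-q$ gives $\Pi_{3_\C,\UnG}=3(q-2)\cdot\tfrac{1}{6}=(q-2)/2$.

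The $\overline{1_\C}$-plane is handled the same way, only simpler: it meets $\C$ in a single point $K$, so all $q+1$ lines of the plane through $K$ are unisecants, and there are no further unisecants (a unisecant must pass through a point of $\C$, and $K$ is the only one in the plane). Again splitting by type, Theorem~\ref{th6:tang_unisec}(ii) removes the possibility of a tangent and Theorem~\ref{th6:tang_unisec}(i) supplies exactly one $\UG$-line, leaving $\Lambda_{\UnG,\overline{1_\C}}=(q+1)-1=q$. A second application of \eqref{eq4_obtainPi1}, now with $\#\N_{\overline{1_\C}}=(q^3-q)/2$, yields $\Pi_{\overline{1_\C},\UnG}=q\cdot\tfrac{1}{2}=q/2$.

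The only point needing care is the claim that the non-chord lines through a cubic point in these planes are genuinely unisecants, i.e.\ do not secant $\C$ at a further point; this is immediate since a $3_\C$-plane (resp.\ $\overline{1_\C}$-plane) contains exactly three (resp.\ one) points of $\C$, so no line of the plane can meet $\C$ outside that set. With this observed, the argument is a clean bookkeeping split of the unisecants into the three mutually exclusive types, and I do not expect a genuine obstacle beyond correctly invoking the tangent- and $\UG$-counts of Theorem~\ref{th6:tang_unisec} and the cardinalities of \eqref{eq2_plane orbit_gen} and \eqref{eq2_classes line q!=0mod3}.
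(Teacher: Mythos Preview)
Your proof is correct and follows essentially the same approach as the paper: count all unisecants in the plane, subtract the $\UG$-contribution from Theorem~\ref{th6:tang_unisec}(i), and convert to $\Pi$-values via \eqref{eq4_obtainPi1}. Your version is slightly more explicit in invoking $\Lambda_{\Tr,3_\C}=\Lambda_{\Tr,\overline{1_\C}}=0$ from Theorem~\ref{th6:tang_unisec}(ii) to rule out tangents, whereas the paper leaves this implicit, but the argument is otherwise identical.
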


\begin{proof} A $3_\C$-plane contains $q-1$ unisecants in each of the points in common with~$\C$; in total, we have $3(q-1)$ unisecants. By Theorem \ref{th6:tang_unisec}(i), $\Lambda_{\UG,3_\C}=3$, i.e. the three unisecants lie also in $\Gamma$-planes. The remaining unisecants are $\UnG$-lines; we have $\Lambda_{\UnG,3_\C}=3(q-1)-3$. Now, by \eqref{eq4_obtainPi1}, we obtain $\Pi_{3_\C,\UnG}=\P_{3_\C,\UnG}=(q-2)/2$.

A $\overline{1_\C}$-plane contains $q+1$ unisecants. By Theorem \ref{th6:tang_unisec}(i), $\Lambda_{\UG,\overline{1_\C}}=1$, i.e. one unisecant  lies also in a $\Gamma$-plane. The remaining unisecants are $\UnG$-lines; we have $\Lambda_{\UnG,\overline{1_\C}}=q$. Now, by \eqref{eq4_obtainPi1}, we obtain $\Pi_{\overline{1_\C},\UnG}=\P_{\overline{1_\C},\UnG}=q/2$.
\end{proof}

\begin{thm}\label{th6:RA_IA}
    For $q\not\equiv0\pmod3$, the following holds:
    \begin{align*}
   &   \Lambda_{\RA,3_\C}^{(1)}=\Lambda_{\RA,0_\C}^{(1)}=
      \Lambda_{\IA,\overline{1_\C}}^{(1)}=\Lambda_{\IA,3_\C}^{(-1)}=\Lambda_{\IA,0_\C}^{(-1)}=
      \Lambda_{\RA,\overline{1_\C}}^{(-1)}=1,\db\\
      &\Pi_{3_\C,\RA}^{(1)}=(q-1)/3,~\Pi_{0_\C,\RA}^{(1)}=2(q-1)/3,~\Pi_{\overline{1_\C},\IA}^{(1)}=q+1,\dbn\\
      &\Lambda_{\IA,3_\C}^{(1)}=\Pi_{3_\C,\IA}^{(1)}=\Lambda_{\IA,0_\C}^{(1)}=\Pi_{0_\C,\IA}^{(1)}=
      \Lambda_{\RA,\overline{1_\C}}^{(1)}=\Pi_{\overline{1_\C},\RA}^{(1)}=0,\dbn\\
      &\Pi_{3_\C,\IA}^{(-1)}=(q+1)/3,~\Pi_{0_\C,\IA}^{(-1)}=2(q+1)/3,~\Pi_{\overline{1_\C},\RA}^{(-1)}=q-1,\dbn\\
      &\Lambda_{\RA,3_\C}^{(-1)}=\Pi_{3_\C,\RA}^{(-1)}=
      \Lambda_{\RA,0_\C}^{(-1)}=\Pi_{0_\C,\RA}^{(-1)}=
      \Lambda_{\IA,\overline{1_\C}}^{(-1)}
      =\Pi_{\overline{1_\C},\IA}^{(-1)}=0.\nt
    \end{align*}
   \end{thm}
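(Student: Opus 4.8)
The plan is to reduce every entry of the theorem to a single count: for a fixed axis $\ell$, the number of planes of the pencil through $\ell$ that are $3_\C$-, $\overline{1_\C}$-, or $0_\C$-planes. The engine is the elementary identity, immediate from \eqref{eq2:P(t)} and \eqref{eq2_osc_plane},
\[
\pi_\t{osc}(s)(P(t))=(s-t)^3,\qquad s,t\in\F_q,
\]
(with the usual convention at $\infty$), expressing that $P(t)$ lies on the linear form of $\pi_\t{osc}(s)$ with the expected multiplicity. Both a real and an imaginary axis are external to $\C$, since their two defining osculating planes meet $\C$ only at the two distinct contact parameters, so Theorem \ref{th4_ext} applies; moreover $\Pi_{\Gamma,\RA}^{(\ne0)}=2$ and $\Pi_{2_\C,\RA}=0$ by Theorems \ref{th6:Gamma-plane}(ii) and \ref{th6:2C}, while $\Pi_{\Gamma,\IA}=\Pi_{2_\C,\IA}=0$ by Theorems \ref{th6:Gamma-plane}(iii) and \ref{th6:2C}. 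Hence for a real axis the two $\Gamma$-planes are the only planes through $\ell$ outside $\{3_\C,\overline{1_\C},0_\C\}$, leaving $q-1$ planes to classify, while for an imaginary axis all $q+1$ planes through $\ell$ must be classified.

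First I would treat a real axis. By the triple transitivity of $G_q$ on $\C$ (Theorem \ref{th2_Hirs}(i)) I may take $\ell=\pi_\t{osc}(t_1)\cap\pi_\t{osc}(t_2)$ with distinct $t_1,t_2\in\F_q$, and parametrise the $q+1$ planes through $\ell$ by the pencil $\lambda\,\pi_\t{osc}(t_1)+\mu\,\pi_\t{osc}(t_2)$, $[\lambda:\mu]\in\PG(1,q)$. The identity above shows that $P(s)\in\C$ lies in the plane $[\lambda:\mu]$ exactly when $\lambda(s-t_1)^3+\mu(s-t_2)^3=0$, so the number of points of $\C$ in that plane is the number of $s\in\F_q^+$ solving this cubic. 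The values $[1:0]$, $[0:1]$ give the two $\Gamma$-planes (Theorem \ref{th6:Gamma-plane}(ii)); for the other $q-1$ planes put $c=-\mu/\lambda\in\F_q^*$ and substitute $u=(s-t_1)/(s-t_2)$, a bijection $\F_q^+\to\PG(1,q)$, reducing the count to the number of $u\in\F_q^*$ with $u^3=c$. As $\F_q^*$ is cyclic of order $q-1$, this number is $1$ for every $c$ when $3\nmid q-1$ (i.e. $\xi=-1$), and is $3$ or $0$ according as $c$ is or is not a cube when $3\mid q-1$ (i.e. $\xi=1$), with exactly $(q-1)/3$ cubes. Reading off the plane types ($3$ points $\Rightarrow 3_\C$, $1$ point $\Rightarrow\overline{1_\C}$, $0$ points $\Rightarrow 0_\C$) yields $\Pi^{(1)}_{3_\C,\RA}=(q-1)/3$, $\Pi^{(1)}_{0_\C,\RA}=2(q-1)/3$, $\Pi^{(1)}_{\overline{1_\C},\RA}=0$, and $\Pi^{(-1)}_{\overline{1_\C},\RA}=q-1$, $\Pi^{(-1)}_{3_\C,\RA}=\Pi^{(-1)}_{0_\C,\RA}=0$. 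The matching $\Lambda$-values then follow from \eqref{eq4_obtainLamb1}, and the vanishing companions from Corollary \ref{cor4_=0}.

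Next I would treat an imaginary axis, where the same scheme runs over $\F_{q^2}$. Here $\ell=\pi_\t{osc}(\theta)\cap\pi_\t{osc}(\theta^q)$ for $\theta\in\F_{q^2}\setminus\F_q$, and the $\F_q$-rational planes through $\ell$ are the Frobenius-fixed members of the pencil, i.e. those of the form $\lambda\,\pi_\t{osc}(\theta)+\lambda^q\,\pi_\t{osc}(\theta^q)$; projectively these are parametrised by $w=\lambda/\lambda^q$ in the subgroup $N\subset\F_{q^2}^*$ of norm-$1$ elements, which is cyclic of order $q+1$. For $s\in\F_q^+$ one has $s-\theta^q=(s-\theta)^q$, so $P(s)$ lies in the plane indexed by $w$ exactly when $w\,u^3=-1$, where $u=(s-\theta)/(s-\theta^q)$ runs bijectively over $N$ as $s$ runs over $\F_q^+$; thus the count is the number of $u\in N$ with $u^3=-w^{-1}$. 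Now the relevant divisibility is $3\mid q+1$, which holds precisely for $\xi=-1$, so the roles of the two residue classes are exactly interchanged relative to the real case: for $\xi=1$ every plane meets $\C$ in one point (all $\overline{1_\C}$), giving $\Pi^{(1)}_{\overline{1_\C},\IA}=q+1$; for $\xi=-1$ exactly $(q+1)/3$ of the planes are $3_\C$ and the rest $0_\C$, giving $\Pi^{(-1)}_{3_\C,\IA}=(q+1)/3$, $\Pi^{(-1)}_{0_\C,\IA}=2(q+1)/3$. Again \eqref{eq4_obtainLamb1} and Corollary \ref{cor4_=0} supply the remaining $\Lambda$-entries and the zeros.

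The main obstacle is the bookkeeping in the imaginary case: one must check that the $\F_q$-rational planes through $\ell$ are exactly the claimed $N$-family of order $q+1$, that $u=(s-\theta)/(s-\theta^q)$ is a genuine bijection $\F_q^+\to N$ (a norm computation together with injectivity of the underlying M\"obius map), and that the substitution faithfully converts ``points of $\C$ in a rational plane'' into ``cube-roots in $N$''. Everything else is routine: the external-line identities of Theorem \ref{th4_ext} (which here read $\Pi_{0_\C}=2\Pi_{3_\C}$ and $\Pi_{\overline{1_\C}}+3\Pi_{3_\C}=q+1-\Pi_\Gamma$) give an independent check, and I would use them to confirm the numbers before converting to $\Lambda$-values. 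I note that the whole argument may equivalently be phrased through the null polarity $\A$ of Theorem \ref{th2_Hirs}(iii), which carries $\RA$-lines to $\RC$-lines and $\IA$-lines to $\IC$-lines and turns ``planes through an axis'' into ``points on a chord lying on prescribed numbers of osculating planes''; the cubic $u^3=c$ is precisely the dual incarnation of that count.
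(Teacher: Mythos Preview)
Your proof is correct and takes a genuinely different route from the paper's. The paper argues structurally: since every plane not in $\Gamma$ contains exactly one axis of $\Gamma$ (Theorem~\ref{th2_Hirs}(v)) and a $3_\C$-plane contains no tangent, each $3_\C$-plane must contain either one $\RA$-line or one $\IA$-line; the two alternatives are then pushed through \eqref{eq4_ext_line3}, \eqref{eq4_obtainPi1}--\eqref{eq4_obtainLamb1}, and the \emph{correct} alternative for each residue class $\xi$ is singled out by the integrality of $\Pi_{0_\C,\RA}$ and $\Pi_{0_\C,\IA}$ (using that $\O_\RA,\O_\IA$ are single orbits). Your argument instead computes the $\Pi$-values directly: the identity $\pi_{\t{osc}}(s)(P(t))=(t-s)^3$ collapses the incidence of $\C$ with the pencil through an axis to counting solutions of $u^3=c$ in the cyclic group $\F_q^*$ (real case) or in the norm-one subgroup $N\subset\F_{q^2}^*$ of order $q+1$ (imaginary case), so the dichotomy $\xi=\pm1$ becomes the elementary dichotomy $3\mid q-1$ versus $3\mid q+1$. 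This is more explicit and explains the appearance of $(q\mp1)/3$ and $2(q\mp1)/3$ without any appeal to integrality; the paper's approach, by contrast, avoids working over $\F_{q^2}$ and needs no parametrisation, deducing everything from the single structural fact about axes in planes. The two bookkeeping points you flag --- that the $\F_q$-rational members of the conjugate pencil are exactly the $q+1$ Frobenius-fixed ones, parametrised by $N$, and that $s\mapsto (s-\theta)/(s-\theta)^q$ is a bijection $\F_q^+\to N$ --- are the only places requiring care, and your norm/M\"obius justification for them is sound.
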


   \begin{proof}
    By Theorem \ref{th2_Hirs}(v), for $q\not\equiv0\pmod3$, every plane not in $\Gamma$ contains exactly one axis of $\Gamma$. By Theorem \ref{th6:tang_unisec}(ii), $\Lambda_{\Tr,3_\C}=0$, i.e. a $3_\C$-plane does not contain a tangent. So, a $3_\C$-plane \emph{must} contain either an $\RA$-line or an $\IA$-line but not together. Thus,  it is sufficient to consider only the following two variants for a $3_\C$-plane:

    (a) $\Lambda_{\RA,3_\C}=1$ and $\Lambda_{\IA,3_\C}=0$;
    (b) $\Lambda_{\RA,3_\C}=0$ and $\Lambda_{\IA,3_\C}=1$.

    (a) Let $\Lambda_{\RA,3_\C}=1$, $\Lambda_{\IA,3_\C}=0$.

    By Corollary \ref{cor4_=0}, we obtain $\Pi_{3_\C,\IA}=0$. By Theorem \ref{th6:2C}, $\Pi_{2_\C,\IA}=0$. As an $\IA$-line is external for $\C$, by \eqref{eq4_ext_line3}, we obtain $\Pi_{0_\C,\IA}=\Pi_{2_\C,\IA}+2\Pi_{3_\C,\IA}=0$ whence $\Lambda_{\IA,0_\C}=0$. By Theorem \ref{th6:Gamma-plane}(iii), $\Pi_{\Gamma,\IA}=0$. Now, by \eqref{eq4_obtainPi2}, we have $\Pi_{\overline{1_\C},\IA}=\Ps_{\overline{1_\C},\IA}=q+1$. By \eqref{eq4_obtainLamb1}, we obtain $\Lambda_{\IA,\overline{1_\C}}=\L_{\IA,\overline{1_\C}}=1$. Therefore, by Theorem \ref{th2_Hirs}(v), it is \emph{necessary} to put $\Lambda_{\RA,\overline{1_\C}}=0$ whence $\Pi_{\overline{1_\C},\RA}=0$.

    By Theorem \ref{th6:tang_unisec}(ii), $\Lambda_{\Tr,0_\C}=0$. Above, we proved $\Lambda_{\IA,0_\C}=0$. So, by Theorem~\ref{th2_Hirs}(v), we \emph{must} put $\Lambda_{\RA,0_\C}=1$. Then, by \eqref{eq4_obtainPi1}, we obtain $\Pi_{0_\C,\RA}=\P_{0_\C,\RA}=2(q-1)/3$. It is an integer if $q\equiv1\pmod3$ whereas for $q\equiv-1\pmod3$ it is not integer.
    By Theorem \ref{th2:DMP_Orb}(i),  the class  $\O_\RA$ is an orbit; therefore, $\Pi_{0_\C,\RA}$ must be integer. Thus, the case (a) is possible only for $q\equiv1\pmod3$.

    By  \eqref{eq4_obtainPi1}, $\Pi_{3_\C,\RA}=\P_{3_\C,\RA}=(q-1)/3$. It is an integer if $q\equiv1\pmod3$.

    (b) Let $\Lambda_{\RA,3_\C}=0$, $\Lambda_{\IA,3_\C}=1$.

    By Corollary \ref{cor4_=0}, we obtain $\Pi_{3_\C,\RA}=0$. By Theorem \ref{th6:2C}, $\Pi_{2_\C,\RA}=0$. So, by \eqref{eq4_ext_line3},  $\Pi_{0_\C,\RA}=0$ whence $\Lambda_{\RA,0_\C}=0$. By Theorem~\ref{th6:Gamma-plane}(ii), $\Pi_{\Gamma,\RA}^{(\ne0)}=2$. Now, by~\eqref{eq4_obtainPi2}, $\Pi_{\overline{1_\C},\RA}=\Ps_{\overline{1_\C},\RA}=q-1$. By \eqref{eq4_obtainLamb1}, we obtain $\Lambda_{\RA,\overline{1_\C}}=\L_{\RA,\overline{1_\C}}=1$. Hence, by Theorem \ref{th2_Hirs}(v), it is \emph{necessary} to put $\Lambda_{\IA,\overline{1_\C}}=0$ whence $\Pi_{\overline{1_\C},\IA}=0$.

    By Theorem \ref{th6:tang_unisec}(ii), $\Lambda_{\Tr,0_\C}=0$. Above, we proved $\Lambda_{\RA,0_\C}=0$. So, by Theorem~\ref{th2_Hirs}(v), we \emph{must} put $\Lambda_{\IA,0_\C}=1$. By \eqref{eq4_obtainPi1}, we have $\Pi_{0_\C,\IA}=\P_{0_\C,\IA}=2(q+1)/3$. It is an integer if $q\equiv-1\pmod3$ but for $q\equiv1\pmod3$ it is not an integer.
    By Theorem \ref{th2:DMP_Orb}(i),  the class  $\O_\IA$ is an orbit; therefore, $\Pi_{0_\C,\IA}$ must be an integer. Thus, the case (b) is possible only for $q\equiv-1\pmod3$. By   \eqref{eq4_obtainPi1}, we obtain $\Pi_{3_\C,\IA}=\P_{3_\C,\IA}=(q+1)/3$. It is an integer if $q\equiv-1\pmod3$.
      \end{proof}

      \begin{corollary}
        Let $q\equiv1\pmod3$.
 Then all $d_\C$-planes with $d=0,2,3$ and all osculating planes contain no the imaginary axes $(\IA$-lines\emph{)}.
All the $q+1$ planes through an $\IA$-line are $\overline{1_\C}$-planes forming a pencil. The $\binom{q}{2}(q+1)$-orbit of all  $\overline{1_\C}$-planes can be partitioned into $\binom{q}{2}$ pencils of planes having an $\IA$-line as axis.
      \end{corollary}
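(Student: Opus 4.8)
The plan is to mirror the proof of Theorem~\ref{th6:imag_chord}, which established the analogous statement for imaginary chords, now invoking the values for $\IA$-lines already computed in Theorem~\ref{th6:RA_IA} for $q\equiv1\pmod3$ ($\xi=1$). First I would dispose of the four plane types (recalling that the osculating planes are exactly the $\Gamma$-planes, by Notation~\ref{notation_1}). By Theorem~\ref{th6:Gamma-plane}(iii) we have $\Lambda_{\IA,\Gamma}=0$, by Theorem~\ref{th6:2C} we have $\Lambda_{\IA,2_\C}=0$, and by Theorem~\ref{th6:RA_IA} in the case $q\equiv1\pmod3$ we have $\Lambda^{(1)}_{\IA,3_\C}=\Lambda^{(1)}_{\IA,0_\C}=0$. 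Together these show directly that no $\IA$-line lies in any $\Gamma$-, $2_\C$-, $3_\C$-, or $0_\C$-plane.

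For the second assertion I would count the $q+1$ planes through a fixed $\IA$-line. Each such plane is of one of the five types in $\Pk$; by the previous paragraph it cannot be of any type other than $\overline{1_\C}$, so all $q+1$ of them are $\overline{1_\C}$-planes, in agreement with $\Pi^{(1)}_{\overline{1_\C},\IA}=q+1$ from Theorem~\ref{th6:RA_IA}. These $q+1$ planes share the $\IA$-line as their common intersection, hence by definition form a pencil with this line as axis.

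The third assertion is a counting argument establishing a genuine partition. The key input is $\Lambda^{(1)}_{\IA,\overline{1_\C}}=1$ from Theorem~\ref{th6:RA_IA}: every $\overline{1_\C}$-plane contains exactly one $\IA$-line, hence belongs to exactly one of the pencils described above. By Theorem~\ref{th2_Hirs}(iv)(a) the number of $\IA$-lines is $\#\O_\IA=(q^2-q)/2=\binom{q}{2}$, so there are $\binom{q}{2}$ such pencils, each consisting of $q+1$ planes. Since $\binom{q}{2}(q+1)=(q^3-q)/2=\#\N_{\overline{1_\C}}$, the pencils are pairwise disjoint and exhaust the whole orbit of $\overline{1_\C}$-planes, which is precisely the claimed partition.

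I do not expect a serious obstacle here, as all the numerical ingredients are already in place; the only point requiring care is verifying that the pencils are simultaneously disjoint and exhaustive. Disjointness is immediate from $\Lambda^{(1)}_{\IA,\overline{1_\C}}=1$ (no $\overline{1_\C}$-plane can contain two distinct $\IA$-lines, hence none can lie in two pencils), while exhaustiveness follows from the cardinality match $\binom{q}{2}(q+1)=\#\N_{\overline{1_\C}}$.
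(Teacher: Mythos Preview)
Your proposal is correct and follows essentially the same approach as the paper's proof, which simply cites ``by above'' the values $\Lambda_{\IA,\pi}^{(1)}=\Pi_{\pi,\IA}^{(1)}=0$ for $\pi\in\{\Gamma,2_\C,3_\C,0_\C\}$, $\Lambda_{\IA,\overline{1_\C}}^{(1)}=1$, $\Pi_{\overline{1_\C},\IA}^{(1)}=q+1$, together with $\#\O_\IA=\binom{q}{2}$. Your version is more explicit in tracing each vanishing value to its specific source theorem and in spelling out the disjointness/exhaustiveness of the partition, but the underlying argument is identical.
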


      \begin{proof}
        By above, $\Lambda_{\IA,\pi}^{(1)}=\Pi_{\pi,\IA}^{(1)}=0$, $\pi\in\{\Gamma,2_\C,3_\C,0_\C\}$, $\Lambda_{\IA,\overline{1_\C}}^{(1)}=1$, $\Pi_{\overline{1_\C},\IA}^{(1)}=q+1$. Also,  $\#\O_\IA=\binom{q}{2}$.
      \end{proof}

      \begin{thm}\label{th6:EG_EnG}
    For $q\not\equiv0\pmod3$, the following holds:
    \begin{align*}
   &   \Lambda_{\EG,3_\C}^{(1)}=q-4,~\Lambda_{\EG,\overline{1_\C}}^{(1)}=q,~\Lambda_{\EG,0_\C}^{(1)}=q-1,\db\\
      &\Pi_{3_\C,\EG}^{(1)}=(q-4)/6,~\Pi_{\overline{1_\C},\EG}^{(1)}=q/2,~\Pi_{0_\C,\EG}^{(1)}=(q-1)/3;\dbn\\
   &   \Lambda_{\t{\emph{En}}\Gamma,3_\C}^{(1)}=q^2-3q+4,~
      \Lambda_{\t{\emph{En}}\Gamma,\overline{1_\C}}^{(1)}=q^2-q-2,~\Lambda_{\t{\emph{En}}\Gamma,0_\C}^{(1)}=q^2+1,\dbn\\
      &\Pi_{3_\C,\t{\emph{En}}\Gamma}^{(1)}=\frac{q^2-3q+4}{6(q-1)},
      ~\Pi_{\overline{1_\C},\t{\emph{En}}\Gamma}^{(1)}=\frac{q^2-q-2}{2(q-1)},~
      \Pi_{0_\C,\t{\emph{En}}\Gamma}^{(1)}=\frac{q^2+1}{3(q-1)}.\dbn\\
   &   \Lambda_{\EG,3_\C}^{(-1)}=
      \Lambda_{\EG,\overline{1_\C}}^{(-1)}=q-2,
      ~\Lambda_{\EG,0_\C}^{(-1)}=q+1,\db\\
      &\Pi_{3_\C,\EG}^{(-1)}=(q-2)/6,~\Pi_{\overline{1_\C},\EG}^{(-1)}=(q-2)/2,~\Pi_{0_\C,\EG}^{(-1)}=(q+1)/3;\dbn\\
   &   \Lambda_{\t{\emph{En}}\Gamma,3_\C}^{(-1)}=(q-1)(q-2),~
      \Lambda_{\t{\emph{En}}\Gamma,\overline{1_\C}}^{(-1)}=q^2-q,~\Lambda_{\t{\emph{En}}\Gamma,0_\C}^{(-1)}=q^2-1,\dbn\\
      &\Pi_{3_\C,\t{\emph{En}}\Gamma}^{(-1)}=(q-2)/6,~\Pi_{\overline{1_\C},\t{\emph{En}}\Gamma}^{(-1)}=q/2,~
      \Pi_{0_\C,\t{\emph{En}}\Gamma}^{(-1)}=(q+1)/3.\nt
    \end{align*}
   \end{thm}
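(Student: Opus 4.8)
The plan is to determine a single geometric quantity directly---the number $\Lambda_{\EG,3_\C}$ of $\EG$-lines in a $3_\C$-plane---and then to obtain every other entry of the statement by purely formal manipulation of the relations of Section~\ref{sec:useful} together with the incidences already fixed in Theorems~\ref{th6:RC}--\ref{th6:RA_IA}. The residue $\xi\in\{1,-1\}$ will enter the argument only once, through the type of the unique axis of $\Gamma$ contained in a $3_\C$-plane, which is supplied by Theorem~\ref{th6:RA_IA}.

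First I would fix a $3_\C$-plane $\pi$, meeting $\C$ in three points, and intersect it with each of the $q+1$ osculating planes. For a point $P\in\C\cap\pi$ the plane $\Gamma_P$ meets $\pi$ in a line through $P$, which is one of the three $\UG$-lines of Theorem~\ref{th6:tang_unisec}(i). For each of the remaining $q-2$ points $P\in\C\setminus\pi$, the line $\ell_P=\pi\cap\Gamma_P$ does not pass through $P$; since $\Gamma_P\cap\C=\{P\}$ and, by Theorem~\ref{th6:imag_chord}, a $\Gamma$-plane carries no imaginary chord, $\ell_P$ is an external line lying in a $\Gamma$-plane, hence either an $\EG$-line or a real axis. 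Two such points give the same line $\ell_{P_1}=\ell_{P_2}$ precisely when that common line equals $\Gamma_{P_1}\cap\Gamma_{P_2}$, i.e.\ is a real axis contained in $\pi$; conversely, if a real axis $a\subset\pi$ is written as $a=\Gamma_P\cap\Gamma_{P'}$ then $P,P'\notin\pi$, for otherwise $a=\pi\cap\Gamma_P$ would pass through $P\in\C$, whereas an axis, being the intersection of two distinct osculating planes, meets $\C$ in no point. By Theorem~\ref{th2_Hirs}(v)(b), $\pi$ contains exactly one axis, which by Theorem~\ref{th6:RA_IA} is real for $\xi=1$ and imaginary for $\xi=-1$. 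Thus for $\xi=1$ exactly one coincident pair among the $\ell_P$ occurs (yielding the real axis, not an $\EG$-line), so $\Lambda^{(1)}_{\EG,3_\C}=q-4$, while for $\xi=-1$ no coincidence occurs and $\Lambda^{(-1)}_{\EG,3_\C}=q-2$.

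From here everything is bookkeeping. Inserting $\Lambda_{\EG,3_\C}$ into \eqref{eq4_obtainPi1}, with the cardinalities $\#\N_{3_\C}$ and $\#\O_\EG$ taken from Theorem~\ref{th2_Hirs}(ii),(iv), gives $\Pi^{(1)}_{3_\C,\EG}=(q-4)/6$ and $\Pi^{(-1)}_{3_\C,\EG}=(q-2)/6$. Summing the now-known column counts $\Lambda_{\RC,3_\C}$, $\Lambda_{\Tr,3_\C}$, $\Lambda_{\IC,3_\C}$, $\Lambda_{\UG,3_\C}$, $\Lambda_{\UnG,3_\C}$, $\Lambda_{\RA,3_\C}$, $\Lambda_{\IA,3_\C}$, $\Lambda_{\EG,3_\C}$ and subtracting from $q^2+q+1$ through \eqref{eq4_obtainLamb2} produces $\Lambda_{\EnG,3_\C}$ (equal to $q^2-3q+4$ for $\xi=1$ and to $(q-1)(q-2)$ for $\xi=-1$), and \eqref{eq4_obtainPi1} converts this into $\Pi_{3_\C,\EnG}$. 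For the $\overline{1_\C}$- and $0_\C$-planes I would treat $\EG$ and $\EnG$ on the same footing, both being external: \eqref{eq4_ext_line3}, with the known $\Pi_{2_\C,\EG}=1$ (Theorem~\ref{th6:tang_unisec}(i)) resp.\ $\Pi_{2_\C,\EnG}=1$ (Theorem~\ref{th6:2C}) and the just-computed $\Pi_{3_\C,\bullet}$, gives $\Pi_{0_\C,\bullet}$; then \eqref{eq4_obtainPi2}, using $\Pi_{\Gamma,\EG}=1$ and $\Pi_{\Gamma,\EnG}=0$ (Theorem~\ref{th6:Gamma-plane}), supplies $\Pi_{\overline{1_\C},\bullet}$; finally \eqref{eq4_obtainLamb1} turns each $\Pi_{\pi,\bullet}$ into the corresponding $\Lambda_{\bullet,\pi}$. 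Running this template in both residue classes reproduces all the listed values.

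The only genuinely geometric---and hence the only delicate---step is the coincidence analysis that yields $\Lambda_{\EG,3_\C}$. Once the reduction by the single real axis (for $\xi=1$) versus its absence (for $\xi=-1$) is justified, the two halves of the theorem follow by the same mechanical substitutions into \eqref{eq4_obtainPi1}, \eqref{eq4_obtainPi2}, \eqref{eq4_obtainLamb1}, \eqref{eq4_obtainLamb2}, and \eqref{eq4_ext_line3}, with nothing subtler than checking that the resulting fractions are the claimed integers or reduced forms.
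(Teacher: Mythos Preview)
Your proof is correct and follows essentially the same approach as the paper: both determine $\Lambda_{\EG,3_\C}$ by intersecting a $3_\C$-plane with all $q+1$ osculating planes, subtracting the three $\UG$-intersections and (for $\xi=1$) the two $\Gamma$-planes whose common real axis lies in $\pi$, and then feed this single count through the relations of Section~\ref{sec:useful}. The only cosmetic difference is in the bookkeeping for $\EnG$: the paper obtains each $\Lambda_{\EnG,\pi}$ directly from the column sum \eqref{eq4_obtainLamb2} and then converts to $\Pi$ via \eqref{eq4_obtainPi1}, whereas you reach the same values through \eqref{eq4_ext_line3} and \eqref{eq4_obtainPi2} first.
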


\begin{proof}
  Let $q\equiv1\pmod3$. Each  $3_\C$-plane intersects all $q+1$ $\Gamma$-planes. By Theorem~\ref{th6:tang_unisec}(i), $\Lambda_{\UG,3_\C}=3$, i.e.\ the three intersections correspond to unisecants in $\Gamma$-planes. By Theorem~\ref{th6:RA_IA},  $\Lambda_{\RA,3_\C}^{(1)}=1$. An $\RA$-line is the intersection of two $\Gamma$-planes. So, the two  intersections of a $3_\C$-plane with $\Gamma$-planes correspond to real axes. The remaining intersections correspond to external lines; thus, $\Lambda_{\EG,3_\C}^{(1)}=q+1-3-2$. By \eqref{eq4_obtainLamb2}, we obtain $\Lambda_{\EnG,3_\C}^{(1)}=\Ll_{\EnG,3_\C}^{(1)}=q^2-3q+4.$

By Theorem \ref{th6:tang_unisec}(i), $\Pi_{2_\C,\EG}=1$. By \eqref{eq4_ext_line3}, $\Pi_{0_\C,\EG}^{(1)}=(q-1)/3$. By  \eqref{eq4_obtainLamb1}, $\Lambda_{\EG, 0_\C}^{(1)}=\L_{\EG, 0_\C}^{(1)}=q-1$.  By \eqref{eq4_obtainLamb2}, $
    \Lambda_{\EnG,0_\C}^{(1)}=\Ll_{\EnG,0_\C}^{(1)}=q^2+1.$

By Theorem \ref{th6:Gamma-plane}(ii), $\Pi_{\Gamma,\EG}^{(\ne0)}=1$. By \eqref{eq4_obtainPi2}, \eqref{eq4_obtainLamb1}, $\Pi_{\overline{1_\C},\EG}^{(1)}=\Ps_{\overline{1_\C},\EG}^{(1)}=q/2$,  $\Lambda_{\EG, \overline{1_\C}}^{(1)}=\L_{\EG, \overline{1_\C}}^{(1)}=q$. Using \eqref{eq4_obtainLamb2}, we have $
    \Lambda_{\EnG,\overline{1_\C}}^{(1)}=\Ll_{\EnG,\overline{1_\C}}^{(1)}=q^2-q-2.$

  Let $q\equiv-1\pmod3$.
 Each  $3_\C$-plane intersects all $q+1$ $\Gamma$-planes. By Theorem \ref{th6:tang_unisec}(i), $\Lambda_{\UG,3_\C}=3$, i.e.\ the three  intersections correspond to unisecants in $\Gamma$-planes.  The remaining intersections correspond to external lines; so, $\Lambda_{\EG,3_\C}^{(-1)}=q+1-3$. Now,
  using~\eqref{eq4_obtainLamb2}, we obtain $\Lambda_{\EnG,3_\C}^{(-1)}=\Ll_{\EnG,3_\C}^{(-1)}=(q-1)(q-2).$

By Theorem \ref{th6:tang_unisec}(i), $\Pi_{2_\C,\EG}=1$.  By \eqref{eq4_ext_line3}, $\Pi_{0_\C,\EG}^{(-1)}=(q+1)/3$. By  \eqref{eq4_obtainLamb1}, $\Lambda_{\EG, 0_\C}^{(-1)}=\L_{\EG, 0_\C}^{(-1)}=q+1$.  By \eqref{eq4_obtainLamb2},  $\Lambda_{\EnG,0_\C}^{(-1)}=\Ll_{\EnG,0_\C}^{(-1)}=q^2-1.$

By Theorem \ref{th6:Gamma-plane}(ii), $\Pi_{\Gamma,\EG}^{(\ne0)}=1$. By \eqref{eq4_obtainPi2}, $\Pi_{\overline{1_\C},\EG}^{(-1)}=\Ps_{\overline{1_\C},\EG}^{(-1)}=(q-2)/2$. By \eqref{eq4_obtainLamb1}, $\Lambda_{\EG, \overline{1_\C}}^{(-1)}=\L_{\EG, \overline{1_\C}}^{(-1)}=q-2$. By \eqref{eq4_obtainLamb2}, $\Lambda_{\EnG,\overline{1_\C}}^{(-1)}=\Ll_{\EnG,\overline{1_\C}}^{(-1)}=q^2-q.$

Finally, by \eqref{eq4_obtainPi1}, we obtain
$\Pi_{3_\C,\EG}^{(1)}$, $\Pi_{3_\C,\EnG}^{(1)}$, $\Pi_{0_\C,\EnG}^{(1)}$, $\Pi_{\overline{1_\C},\EnG}^{(1)}$, $\Pi_{3_\C,\EG}^{(-1)}$, $\Pi_{3_\C,\EnG}^{(-1)}$, $\Pi_{0_\C,\EnG}^{(-1)}$, and $\Pi_{\overline{1_\C},\EnG}^{(-1)}$, using the values of $\Lambda_{\lambda, \pi}^{(\xi)}$ obtained above.
\end{proof}

\begin{corollary}
For $q\equiv1\pmod3$ and odd $q\equiv-1\pmod3$, the class $\O_6=\O_\EnG=\{\EnG$-lines\} contains at least two line orbits under $G_q$.
\end{corollary}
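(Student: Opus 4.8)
The plan is to apply the integrality criterion of Lemma~\ref{lem4_line&nplane}(v): the class $\O_\EnG=\O_6$ contains more than one orbit as soon as the \emph{average} number $\Pi_{\pi,\EnG}^{(\xi)}$ of $\pi$-planes through an $\EnG$-line fails to be an integer for even a single plane type~$\pi$. Thus it suffices to exhibit, in each of the two congruence/parity cases, one non-integral entry among the values $\Pi_{\pi,\EnG}^{(\xi)}$ already computed in Theorem~\ref{th6:EG_EnG} (equivalently, displayed in the $\EnG$-rows of Table~\ref{tab1}).

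For $q\equiv1\pmod3$ I would use $\Pi_{0_\C,\EnG}^{(1)}=\frac{q^2+1}{3(q-1)}$. Since $q^2\equiv1\pmod{q-1}$, we have $q^2+1\equiv2\pmod{q-1}$, so $q-1\nmid q^2+1$ whenever $q-1>2$, that is for all $q\ge5$. Because $q-1$ divides $3(q-1)$, this already forces $3(q-1)\nmid q^2+1$, so $\Pi_{0_\C,\EnG}^{(1)}$ is not an integer and Lemma~\ref{lem4_line&nplane}(v) gives at least two orbits. For odd $q\equiv-1\pmod3$ I would instead use $\Pi_{\overline{1_\C},\EnG}^{(-1)}=q/2$, which is a proper half-integer for odd $q$ and hence again non-integral; the same lemma yields the conclusion.

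There is essentially no obstacle beyond this elementary arithmetic, since the substantive work---the actual counts $\Lambda_{\EnG,\pi}^{(\xi)}$ and the resulting averages $\Pi_{\pi,\EnG}^{(\xi)}$---was already carried out in Theorem~\ref{th6:EG_EnG}. The only point requiring care is the choice, in each case, of a plane type $\pi$ whose displayed average is provably non-integral: $0_\C$ serves for $q\equiv1\pmod3$ and $\overline{1_\C}$ for odd $q\equiv-1\pmod3$. One could equally invoke $\Pi_{3_\C,\EnG}^{(-1)}=(q-2)/6$ in the second case, whose numerator is odd for odd $q$, but the $\overline{1_\C}$ entry is the simplest witness.
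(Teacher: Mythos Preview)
Your proposal is correct and follows essentially the same route as the paper: both invoke Lemma~\ref{lem4_line&nplane}(v) with the very same witnesses, namely $\Pi_{0_\C,\EnG}^{(1)}=(q^2+1)/3(q-1)$ for $q\equiv1\pmod3$ and $\Pi_{\overline{1_\C},\EnG}^{(-1)}=q/2$ for odd $q\equiv-1\pmod3$. The only cosmetic difference is in the non-integrality check for the first case: the paper observes $q^2+1\equiv2\pmod3$ while $3\mid 3(q-1)$, whereas you argue via $q^2+1\equiv2\pmod{q-1}$; both are valid and equally elementary.
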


\begin{proof}
 By Theorem \ref{th6:EG_EnG}, for $q\equiv1\pmod3$,  $\Pi_{0_\C,\EnG}^{(1)}=(q^2+1)/3(q-1)$; it is not an integer as $q^2+1\equiv2\pmod3$ but $3(q-1)\equiv0\pmod9$.  For $q\equiv-1\pmod3$, $\Pi_{\overline{1_\C},\EnG}^{(-1)}=q/2$;  it is not an integer  for odd $q$.  Now we use Lemma~\ref{lem4_line&nplane}(v).
\end{proof}

\begin{thm}\label{th6_design}
  For $q\not\equiv0\pmod3$, the submatrix $\I^{\Pi\Lambda}_{\Gamma,\RA}$ of $\I^{\Pi\Lambda}$ is a simple complete $2\t{-}(q+1,2,1)$ design in the sense of~\emph{\cite[Section 1.6]{HandbCombDes2v_k_lamb}}.
\end{thm}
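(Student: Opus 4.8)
The plan is to realize the incidence structure encoded by $\I^{\Pi\Lambda}_{\Gamma,\RA}$ as the complete graph on the $\Gamma$-planes. I take the $q+1$ osculating planes as the \emph{points} of the putative design (so $v=\#\N_\Gamma=q+1$) and the $\RA$-lines as its \emph{blocks}; in the submatrix $\I^{\Pi\Lambda}_{\Gamma,\RA}$ the columns index the points and the rows index the blocks, so the block attached to a real axis consists of those $\Gamma$-planes passing through it, and the ones in each row mark exactly these incidences.

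First I would record the block size. By the definition of a real axis (an axis of $\Gamma$ that is the intersection of two \emph{distinct} real planes of $\Gamma$) together with Theorem~\ref{th6:Gamma-plane}(ii), every $\RA$-line lies in exactly two $\Gamma$-planes, i.e. $\Pi^{(\ne0)}_{\Gamma,\RA}=2$. Hence each block is a $2$-subset of the point set, so $k=2$ and each row of $\I^{\Pi\Lambda}_{\Gamma,\RA}$ contains exactly two ones. As a consistency check, $\Lambda^{(\ne0)}_{\RA,\Gamma}=q$ from the same theorem says each point lies on $q$ blocks, which is the replication number $r=v-1=q$ forced by a $2$-$(q+1,2,1)$ design.

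Next I would establish that the map sending an unordered pair of distinct $\Gamma$-planes to its intersection line is a bijection onto the set of $\RA$-lines. Two distinct planes of $\PG(3,q)$ meet in a unique line, and for $q\not\equiv0\pmod3$ this line is a real axis (again by the definition of axis and Theorem~\ref{th6:Gamma-plane}(ii)), so the map is well defined and lands among the $\RA$-lines. It is injective because, by the previous paragraph, an $\RA$-line determines the unique pair of $\Gamma$-planes containing it. It is surjective by counting: there are $\binom{q+1}{2}=(q^2+q)/2=\#\O_\RA$ pairs of $\Gamma$-planes and, by \eqref{eq2_classes line q!=0mod3}, the same number of $\RA$-lines, so the injection is forced to be a bijection.

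It then follows that the blocks are precisely the $\binom{q+1}{2}$ two-element subsets of the $q+1$ points, each occurring exactly once, whence every pair of points lies in exactly $\lambda=1$ block; this is the defining property of a $2$-$(q+1,2,1)$ design. The design is \emph{complete} since all pairs appear as blocks, and \emph{simple} since distinct pairs give distinct intersection lines, so no block is repeated. The step most deserving of care is the surjectivity of the pairing map, but this is immediate once the count $\#\O_\RA=(q^2+q)/2$ is matched against $\binom{q+1}{2}$; no genuine obstacle arises, as every relevant incidence has already been pinned down in Theorem~\ref{th6:Gamma-plane}(ii).
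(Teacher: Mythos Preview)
Your proof is correct and follows essentially the same approach as the paper: both identify the blocks with the intersections of pairs of distinct $\Gamma$-planes and observe that these are exactly the $\RA$-lines. The paper's argument is just the two-line observation that any two $\Gamma$-planes meet in an $\RA$-line and that these intersections exhaust $\I^{\Pi\Lambda}_{\Gamma,\RA}$; your version spells out the bijection and the counting $\binom{q+1}{2}=\#\O_\RA$ explicitly, but the underlying idea is identical.
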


\begin{proof}
  Any two $\Gamma$-planes intersect each other in an $\RA$-line. All these intersections correspond to $\I^{\Pi\Lambda}_{\Gamma,\RA}$.
\end{proof}

\section{The numbers of $\pi$-planes through $\lambda$-lines and of $\lambda$-lines in $\pi$-planes,  for $\PG(3,q)$, $q\equiv0\pmod3$}\label{sec:esults_q=0}
For $\RC$-, $\Tr$-, $\IC$-, $\UG$-, and $\UnG$-lines we use the results of Section \ref{sec:results_q_ne0}, see Table \ref{tab1} where these results are written without superscripts. Also, we may use the values $ \Lambda_{\EnG,\Gamma}= \Pi_{\Gamma,\EnG}=0$, see Theorem \ref{th6:Gamma-plane}(iii).

\begin{thm} \label{th7:A-line}
 For $q\equiv0\pmod3$, the following holds:
 \begin{align*}
 &\Lambda_{\Ar,\Gamma}^{(0)}=1,~\Pi_{\Gamma,\Ar}^{(0)}=q+1,~\Lambda_{\Ar,\pi}^{(0)}=\Pi_{\pi,\Ar}^{(0)}=0,~\pi\in\{2_\C,3_\C,\overline{1_\C},0_\C\},\db\\
 & \Lambda_{\EA,\Gamma}^{(0)}=q^2-1,~ \Pi_{\Gamma,\EA}^{(0)}=1.
 \end{align*}
\end{thm}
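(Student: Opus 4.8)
The plan is to derive everything from the single structural fact that, for $q\equiv0\pmod3$, the osculating developable $\Gamma$ is a \emph{pencil} of planes: all $q+1$ $\Gamma$-planes share the common axis line $\Ar$, which by definition is the unique line of the class $\O_\Ar$ (so $\#\O_\Ar=1$). Everything else is bookkeeping with the identities of Section~\ref{sec:useful} and the already-known $\Gamma$-plane counts.

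First I would settle the $\Ar$-line row. Since $\Ar$ lies in each of the $q+1$ $\Gamma$-planes, it is immediate that $\Pi_{\Gamma,\Ar}^{(0)}=q+1$; and since $\O_\Ar$ consists of this single line lying in every $\Gamma$-plane, $\Lambda_{\Ar,\Gamma}^{(0)}=1$. To kill the remaining four plane types I invoke \eqref{eq4_planes_through_line_sum}, namely $\sum_{\pi\in\Pk}\Pi_{\pi,\Ar}=q+1$. As the $\Gamma$-term alone already equals $q+1$, every other summand must vanish, giving $\Pi_{\pi,\Ar}^{(0)}=0$ for $\pi\in\{2_\C,3_\C,\overline{1_\C},0_\C\}$; Corollary~\ref{cor4_=0} then yields $\Lambda_{\Ar,\pi}^{(0)}=0$ for these $\pi$ as well. (Equivalently, a line through which all $q+1$ planes are already $\Gamma$-planes cannot lie in a plane of any other orbit.)

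Next I would obtain $\Lambda_{\EA,\Gamma}^{(0)}$ by counting the $\beta_{2,q}=q^2+q+1$ lines of a fixed $\Gamma$-plane according to their type in $\Lk^{(0)}=\{\RC,\Tr,\IC,\UG,\UnG,\EnG,\Ar,\EA\}$. All contributions except that of $\EA$ are already established for a $\Gamma$-plane: $\Lambda_{\RC,\Gamma}=0$ by Theorem~\ref{th6:RC}(i); $\Lambda_{\Tr,\Gamma}=1$ and $\Lambda_{\UG,\Gamma}=q$ by Theorem~\ref{th6:Gamma-plane}(i); $\Lambda_{\IC,\Gamma}=\Lambda_{\UnG,\Gamma}=\Lambda_{\EnG,\Gamma}=0$ by Theorem~\ref{th6:Gamma-plane}(iii); and $\Lambda_{\Ar,\Gamma}^{(0)}=1$ from the previous step. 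Applying \eqref{eq4_obtainLamb2} with $\lambda^*=\EA$ forces $\Lambda_{\EA,\Gamma}^{(0)}=(q^2+q+1)-(0+1+0+q+0+0+1)=q^2-1$. Finally, $\Pi_{\Gamma,\EA}^{(0)}$ follows from \eqref{eq4_obtainPi1} with $\#\N_\Gamma=q+1$ and $\#\O_\EA=(q+1)(q^2-1)$ taken from \eqref{eq2_classes line q=0mod3}, giving $\Pi_{\Gamma,\EA}^{(0)}=(q^2-1)(q+1)/[(q+1)(q^2-1)]=1$.

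There is no serious obstacle here; the whole statement is a consequence of the pencil structure together with the counting relations. The one point requiring care is verifying that the list $\Lk^{(0)}$ is genuinely exhausted when applying \eqref{eq4_obtainLamb2}, i.e.\ that for a $\Gamma$-plane every line type other than $\EA$ already has a determined count, so that the single unknown $\Lambda_{\EA,\Gamma}^{(0)}$ is pinned down. I would also remark that the tabulated value $\Pi_{\Gamma,\EA}^{(0)}=1$ is in fact exact for each individual $\EA$-line, not merely an average over the three orbits of $\O_\EA$: any $\EA$-line $\ell$ meets $\Ar$, so $\ell$ and $\Ar$ span a \emph{unique} plane, whence at most one $\Gamma$-plane contains $\ell$, and a $\{0,1\}$-valued quantity with average $1$ is identically $1$. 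This justifies the corresponding entry of Theorem~\ref{th3_main_res}(iv), though it is not needed for the present statement.
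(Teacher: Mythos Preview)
Your argument is correct and follows essentially the same route as the paper: invoke the pencil structure of $\Gamma$ for the $\Ar$-row, then use \eqref{eq4_obtainLamb2} (with the already-known $\Gamma$-plane line counts) to obtain $\Lambda_{\EA,\Gamma}^{(0)}=q^2-1$, and finally \eqref{eq4_obtainPi1} to get $\Pi_{\Gamma,\EA}^{(0)}=1$. Your write-up is simply more explicit than the paper's (and the closing remark on the exactness of $\Pi_{\Gamma,\EA}^{(0)}$ is a nice bonus).
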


\begin{proof} For $q\equiv0\pmod3$, $\Gamma$-planes form a pencil with axis $\Ar$-line, see Section~\ref{sec_prelimin}. This implies the first row of the assertion.
 By~\eqref{eq4_obtainLamb2}, we obtain  $\Lambda_{\EA,\Gamma}^{(0)}=\Ll_{\EA,\Gamma}^{(0)}=q^2-1.$   By   \eqref{eq4_obtainPi1},  $\Pi_{\Gamma,\EA}^{(0)}=\P_{\Gamma,\EA}^{(0)}=1$.
\end{proof}

\begin{thm}\label{th7:2C}
 For $q\equiv0\pmod3$, we have
 \begin{align*}
   \Lambda_{\EA,2_\C}^{(0)}=q-1,~\Pi_{2_\C,\EA}^{(0)}=\frac{q}{q+1},~ \Lambda_{\EnG,2_\C}^{(0)}=(q-1)^2,~ \Pi_{2_\C,\EnG}^{(0)}=1.
\end{align*}
\end{thm}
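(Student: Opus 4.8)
The plan is to adapt the computation of $\Lambda_{\EG,2_\C}$ from Theorems~\ref{th6:tang_unisec}(i) and~\ref{th6:2C} to the degenerate developable of the case $q\equiv0\pmod3$. Here all $q+1$ osculating planes pass through the single common axis, the $\Ar$-line (Theorem~\ref{th7:A-line}); write $a$ for this axis. Since there are exactly $q+1$ planes through any fixed line and exactly $q+1$ osculating planes, the planes through $a$ are \emph{precisely} the $\Gamma$-planes. Consequently, an external line of a $2_\C$-plane that happens to lie in an osculating plane must meet $a$, so it is an $\EA$-line rather than an $\EG$-line. The strategy is therefore to first prove $\Lambda_{\EA,2_\C}^{(0)}=q-1$ by a concurrency count inside a fixed $2_\C$-plane, and then to read off $\Lambda_{\EnG,2_\C}^{(0)}$, $\Pi_{2_\C,\EA}^{(0)}$, $\Pi_{2_\C,\EnG}^{(0)}$ from the bookkeeping relations \eqref{eq4_obtainLamb2} and \eqref{eq4_obtainPi1}, all of whose remaining ingredients are already known.

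For the core count, fix a $2_\C$-plane $\pi$. By Theorem~\ref{th6:RC} it carries a unique real chord $\R\CC=\overline{P_1P_2}$, and I choose the labelling of $P_1,P_2$ so that the unique tangent in $\pi$ (Theorem~\ref{th6:tang_unisec}(i)) is $\T_1$; then $\pi$ meets the osculating planes $\Gamma_1,\Gamma_2$ at $P_1,P_2$ in $\T_1$ and in a $\UG$-line, respectively. Because the planes through $a$ are exactly the $\Gamma$-planes, the $2_\C$-plane $\pi$ does not contain $a$, so $\pi\cap a$ is a single point $X$. Every osculating plane contains $a$, hence each of the $q+1$ intersection lines $\pi\cap\Gamma_i$ passes through $X$; they are pairwise distinct, since a line common to two osculating planes would be $a\not\subset\pi$, so they are exactly the $q+1$ lines of $\pi$ through $X$. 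Two of these are the unisecants $\T_1$ and the named $\UG$-line. Each of the remaining $q-1$ lies in some $\Gamma_i$ with $i\neq1,2$, which meets $\C$ only in its own contact point $\notin\pi$; hence such a line contains no point of $\C$ and is external, and passing through $X\in a$ it is an $\EA$-line. Conversely every $\EA$-line of $\pi$ meets $a$ and so passes through $X$; thus exactly $q-1$ of the lines through $X$ are $\EA$-lines, giving $\Lambda_{\EA,2_\C}^{(0)}=q-1$.

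Collecting $\Lambda_{\RC,2_\C}=1$, $\Lambda_{\Tr,2_\C}=\Lambda_{\UG,2_\C}=1$, $\Lambda_{\IC,2_\C}=0$ and $\Lambda_{\UnG,2_\C}=2(q-1)$ (Theorems~\ref{th6:RC}, \ref{th6:imag_chord}, \ref{th6:tang_unisec}), together with $\Lambda_{\Ar,2_\C}^{(0)}=0$ (Theorem~\ref{th7:A-line}) and the value just found, relation \eqref{eq4_obtainLamb2} yields $\Lambda_{\EnG,2_\C}^{(0)}=q^2+q+1-3q=(q-1)^2$. Substituting $\#\N_{2_\C}=q^2+q$, $\#\O_\EA=(q+1)(q^2-1)$ and $\#\O_\EnG=(q^2-q)(q^2-1)$ into \eqref{eq4_obtainPi1} then gives $\Pi_{2_\C,\EA}^{(0)}=q/(q+1)$ and $\Pi_{2_\C,\EnG}^{(0)}=1$. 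The main obstacle is the middle paragraph: one must verify that exactly two of the $q+1$ concurrent intersection lines are unisecants and that each of the other $q-1$ is genuinely external (neither a chord nor a unisecant through $P_1$ or $P_2$). Unlike the case $q\not\equiv0\pmod3$, where the ``one axis per plane'' property (Theorem~\ref{th2_Hirs}(v)) performs the separation, here the argument rests on the concurrency at $X$ combined with the fact that an osculating plane meets $\C$ in a single point; the rest is routine substitution.
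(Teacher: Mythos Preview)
Your proof is correct and follows essentially the same route as the paper: count the $q+1$ intersections of a fixed $2_\C$-plane with the osculating planes, identify two of them as the tangent and the $\UG$-line at the contact points, conclude the remaining $q-1$ are external lines meeting the axis, and then apply \eqref{eq4_obtainLamb2} and \eqref{eq4_obtainPi1}. The paper simply invokes the computation in the proof of Theorem~\ref{th6:tang_unisec}(i) for the external-line claim, whereas you reprove it via the concurrency of all $q+1$ intersection lines at the single point $X=\pi\cap a$; this is a nice organizing device specific to $\xi=0$ (since it is precisely the pencil structure of $\Gamma$ that forces concurrency), and it makes the pairwise distinctness of the intersection lines and the converse direction (every $\EA$-line of $\pi$ passes through $X$) explicit, which the paper leaves implicit.
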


\begin{proof} In the  proof of Theorem \ref{th6:tang_unisec}(i),  it is shown that a $2_\C$-plane intersects two $\Gamma$-planes, placed in its points in common with $\C$, by unisecants to $\C$ and the other $q-1$ $\Gamma$-planes by lines external with respect to $\C$. As these external lines lie in $\Gamma$-planes, they intersect the axis ($\Ar$-line), i.e. they are $\EA$-lines. Thus, $ \Lambda_{\EA,2_\C}^{(0)}=q-1$. Now, using \eqref{eq4_obtainLamb2}, we obtain $\Lambda_{\EnG,2_\C}^{(0)}=\Ll_{\EnG,2_\C}^{(0)}=(q-1)^2.$
      Finally, by~\eqref{eq4_obtainPi1}, we have $\Pi_{2_\C,\EA}^{(0)}=\P_{2_\C,\EA}^{(0)}=q/(q+1)$, $\Pi_{2_\C,\EnG}^{(0)}=\P_{2_\C,\EnG}^{(0)}=1$.
\end{proof}

\begin{thm}\label{th7:EA_EnG}
 For $q\equiv0\pmod3$, the following holds:
 \begin{align*}
 &\Lambda_{\EA,3_\C}^{(0)}=q-2,~\Lambda_{\EA,\overline{1_\C}}^{(0)}=q,~\Lambda_{\EA,0_\C}^{(0)}=q+1,\db\\
 &\Lambda_{\EnG,3_\C}^{(0)}=q^2-3q+3,~\Lambda_{\EnG,\overline{1_\C}}^{(0)}=q^2-q-1,~\Lambda_{\EnG,0_\C}^{(0)}=q^2,\db\\
 &\Pi_{3_\C,\EA}^{(0)}=\frac{q(q-2)}{6(q+1)},~ \Pi_{\overline{1_\C},\EA}^{(0)}=\frac{q^2}{2(q+1)},~ \Pi_{0_\C,\EA}^{(0)}=\frac{1}{3}q, \db\\
 & \Pi_{3_\C,\EnG}^{(0)}=\frac{q^2-3q+3}{6(q-1)},~ \Pi_{\overline{1_\C},\EnG}^{(0)}=\frac{q^2-q-1}{2(q-1)},~ \Pi_{0_\C,\EnG}^{(0)}=\frac{q^2}{3(q-1)}.
  \end{align*}
\end{thm}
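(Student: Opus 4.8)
The plan is to determine the six line-counts $\Lambda_{\EA,\pi}^{(0)}$ and $\Lambda_{\EnG,\pi}^{(0)}$ for $\pi\in\{3_\C,\overline{1_\C},0_\C\}$ first, by a single geometric argument, and then to read off all six averages $\Pi_{\pi,\EA}^{(0)}$ and $\Pi_{\pi,\EnG}^{(0)}$ from them by the counting relation \eqref{eq4_obtainPi1}. The geometric heart of the proof is that, for $q\equiv0\pmod3$, the osculating developable $\Gamma$ is a \emph{pencil} of planes whose common line is the axis $\Ar$ (Section \ref{sec_prelimin} and Theorem \ref{th2_Hirs}). First I would fix a plane $\pi$ of type $3_\C$, $\overline{1_\C}$, or $0_\C$. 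Since $\Pi_{\pi,\Ar}^{(0)}=0$ by Theorem \ref{th7:A-line}, the plane $\pi$ does not contain $\Ar$ and hence meets it in a single point $P$. Every $\Gamma$-plane contains $\Ar$, so contains $P$, and therefore each of the $q+1$ lines $\pi\cap(\Gamma\text{-plane})$ passes through $P$. These lines are pairwise distinct, for if two coincided the common line would lie in the intersection of two distinct $\Gamma$-planes, namely $\Ar$, contradicting $\pi\cap\Ar=\{P\}$. As $\pi$ has exactly $q+1$ lines through $P$, the intersections with the $\Gamma$-planes are precisely these $q+1$ lines.

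Next I would classify them. By Theorem \ref{th6:tang_unisec}(ii) none of $3_\C,\overline{1_\C},0_\C$ contains a tangent, so no intersection line is a tangent. A line through $P$ is a unisecant exactly when it joins $P$ to a point of $\C\cap\pi$, and for $P(t)\in\C\cap\pi$ the line $\pi\cap\pi_{\t{osc}}(t)$ is such a $\UG$-line; distinct $\C$-points give distinct $\UG$-lines, since otherwise a single line in $\pi_{\t{osc}}(t)$ would carry two points of $\C$, impossible as an osculating plane meets $\C$ only at its contact point. Hence the number of unisecant intersection lines equals $\Lambda_{\UG,\pi}^{(0)}$, which is $3,1,0$ for $\pi=3_\C,\overline{1_\C},0_\C$ respectively (Theorems \ref{th6:tang_unisec}(i) and \ref{th6:LambdaPi_UnG}). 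Each remaining intersection line is external to $\C$, lies in a $\Gamma$-plane, and meets $\Ar$ at $P$ without being $\Ar$ itself, so it is an $\EA$-line. This gives
\begin{align*}
\Lambda_{\EA,3_\C}^{(0)}=q-2,\quad \Lambda_{\EA,\overline{1_\C}}^{(0)}=q,\quad \Lambda_{\EA,0_\C}^{(0)}=q+1.
\end{align*}

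With these values in hand I would obtain the $\EnG$-counts from the row-sum relation \eqref{eq4_obtainLamb2}, substituting the already established numbers $\Lambda_{\RC,\pi}^{(0)}$, $\Lambda_{\Tr,\pi}^{(0)}$, $\Lambda_{\IC,\pi}^{(0)}$, $\Lambda_{\UG,\pi}^{(0)}$, $\Lambda_{\UnG,\pi}^{(0)}$ (Theorems \ref{th6:RC}, \ref{th6:tang_unisec}, \ref{th6:imag_chord}, \ref{th6:LambdaPi_UnG}), together with $\Lambda_{\Ar,\pi}^{(0)}=0$ and the $\Lambda_{\EA,\pi}^{(0)}$ just found; this yields $\Lambda_{\EnG,3_\C}^{(0)}=q^2-3q+3$, $\Lambda_{\EnG,\overline{1_\C}}^{(0)}=q^2-q-1$, and $\Lambda_{\EnG,0_\C}^{(0)}=q^2$. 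Finally, all six averages $\Pi_{\pi,\EA}^{(0)}$ and $\Pi_{\pi,\EnG}^{(0)}$ follow by \eqref{eq4_obtainPi1}, using $\#\O_\EA=(q+1)(q^2-1)$, $\#\O_\EnG=(q^2-q)(q^2-1)$ and the plane-orbit sizes from \eqref{eq2_plane orbit_gen}, after cancelling the common factor $q^3-q=q(q-1)(q+1)$. I expect the only genuine obstacle to be the geometric step: verifying that the $\Gamma$-plane intersections are exactly the $q+1$ lines of $\pi$ through $P$ and that the non-unisecant ones are $\EA$-lines rather than $\EnG$-lines. Everything after that is linear bookkeeping whose internal consistency is guaranteed by Lemma \ref{lem4_line&nplane}.
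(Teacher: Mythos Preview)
Your proposal is correct and follows essentially the same route as the paper: intersect a fixed $3_\C$-, $\overline{1_\C}$-, or $0_\C$-plane with all $q+1$ $\Gamma$-planes, observe that $\Lambda_{\UG,\pi}$ of these intersections are $\UG$-lines and the rest are $\EA$-lines, then recover the $\EnG$-counts via \eqref{eq4_obtainLamb2} and all the $\Pi$-values via \eqref{eq4_obtainPi1}. You supply more explicit justification than the paper does (distinctness of the $q+1$ intersection lines, the fact that they exhaust the pencil through $P=\pi\cap\Ar$, and why the external ones are genuinely $\EA$- rather than $\EnG$-lines), but the argument is the same.
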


\begin{proof}
A $3_\C$-plane intersects all $q+1$ $\Gamma$-planes. Exactly three of these intersections are unisecants of $\C$ as $\Lambda_{\UG,3_\C}=3$, see Theorem \ref{th6:tang_unisec}(i). The other $q-2$ intersections correspond to lines external with respect to $\C$. As these external lines lie in $\Gamma$-planes they intersect the axis ($\Ar$-line), i.e. they are $\EA$-lines. So, $\Lambda_{\EA,3_\C}^{(0)}=q-2$. Similarly, a $\overline{1_\C}$-plane intersects exactly one $\Gamma$-plane by a unisecant, see $\Lambda_{\UG,\overline{1_\C}}=1$ in Theorem \ref{th6:tang_unisec}(i); the intersections with the other $q$ $\Gamma$-planes provide $\Lambda_{\EA,\overline{1_\C}}^{(0)}=q$. Finally, all $q+1$ intersections of a $0_\C$-plane with $\Gamma$-planes are external lines by the definition of a $0_\C$-plane. This gives $\Lambda_{\EA,0_\C}^{(0)}=q+1$.

Now, using \eqref{eq4_obtainLamb2}, we obtain $\Lambda_{\EnG,3_\C}^{(0)}$, $\Lambda_{\EnG,\overline{1_\C}}^{(0)}$, and  $\Lambda_{\EnG,0_\C}^{(0)}$.

Finally, by  \eqref{eq4_obtainPi1}, we obtain $\Pi_{3_\C,\EA}^{(0)}$, $\Pi_{\overline{1_\C},\EA}^{(0)}$, $\Pi_{0_\C,\EA}^{(0)}$, $\Pi_{3_\C,\EnG}^{(0)}$,  $\Pi_{\overline{1_\C},\EnG}^{(0)}$, and $\Pi_{0_\C,\EnG}^{(0)}$ , using the values of $\Lambda_{\lambda, \pi}^{(0)}$ obtained above.
\end{proof}

\begin{corollary}
For $q\equiv0\pmod3$, the class $\O_6=\O_\EnG=\{\EnG$-lines\} contains at least two line orbits under $G_q$.
\end{corollary}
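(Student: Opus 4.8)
The plan is to follow the same strategy as in the corollary for the case $q\not\equiv 0\pmod 3$: exhibit a plane type $\pi$ for which the average number $\Pi_{\pi,\EnG}^{(0)}$ of $\pi$-planes through an $\EnG$-line fails to be an integer, and then invoke Lemma~\ref{lem4_line&nplane}(v). That lemma tells us that if $\O_\EnG$ were a single orbit, then $\Pi_{\pi,\EnG}^{(0)}$ would be the \emph{exact} number of $\pi$-planes through each $\EnG$-line and hence an integer; a non-integral value therefore forces $L_{\EnG\Sigma}^{(0)}\ge 2$.

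First I would record the three averages already computed in Theorem~\ref{th7:EA_EnG}:
\begin{align*}
\Pi_{3_\C,\EnG}^{(0)}=\frac{q^2-3q+3}{6(q-1)},\qquad
\Pi_{\overline{1_\C},\EnG}^{(0)}=\frac{q^2-q-1}{2(q-1)},\qquad
\Pi_{0_\C,\EnG}^{(0)}=\frac{q^2}{3(q-1)}.
\end{align*}
It suffices that one of these is non-integral, and the cleanest is the middle one: the numerator $q^2-q-1=q(q-1)-1$ is odd, being one less than a product of two consecutive integers, while the denominator $2(q-1)$ is even, so the quotient cannot be an integer. (Alternatively, since $q\equiv 0\pmod 3$ with $q$ a prime power forces $q=3^h\ge 9$, one may use $\Pi_{0_\C,\EnG}^{(0)}=\frac{q^2}{3(q-1)}$: from $q\equiv 1\pmod{q-1}$ we get $\gcd(q^2,q-1)=1$, so the factor $q-1>1$ in the denominator cannot cancel.)

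With non-integrality in hand, Lemma~\ref{lem4_line&nplane}(v) immediately yields $L_{\EnG\Sigma}^{(0)}\ge 2$, completing the proof. I do not anticipate any real obstacle: the entire content is the elementary parity observation above, and the only thing worth double-checking is that the chosen value is non-integral \emph{uniformly} in $q$, which the argument for $\Pi_{\overline{1_\C},\EnG}^{(0)}$ guarantees in a single line for every admissible $q$.
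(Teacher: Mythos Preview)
Your proposal is correct and essentially identical to the paper's own proof: the paper also selects $\Pi_{\overline{1_\C},\EnG}^{(0)}=(q^2-q-1)/(2(q-1))$, observes that the numerator is odd while the denominator is even, and then appeals to Lemma~\ref{lem4_line&nplane}(v). Your added alternative via $\Pi_{0_\C,\EnG}^{(0)}$ is a harmless extra observation but not needed.
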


\begin{proof}
 By Theorem \ref{th7:EA_EnG},  $\Pi_{\overline{1_\C},\EnG}^{(0)}=(q^2-q-1)/2(q-1)$; it is not an integer as the numerator is odd but the denominator is even.  Now we use Lemma~\ref{lem4_line&nplane}(v).
\end{proof}

\begin{thm}
 Let $\pi\in\Pk$. Let a class $\O_\lambda$ consist of a single orbit.
Then the submatrix $\I^{\Pi\Lambda}_{\pi,\lambda}$ of  $\I^{\Pi\Lambda}$ is
  a $(v_r,b_k)$ configuration of Definition \emph{\ref{def2_config}} with $v=\#\N_\pi$, $b=\#\O_\lambda$, $r=\Lambda_{\lambda,\pi}$, $k=\Pi_{\pi,\lambda}$. Also, up to rearrangement of rows and columns, the submatrices $\I^{\Pi\Lambda}_{\pi,\lambda}$ with $\Lambda_{\lambda,\pi}^{(\xi)}=1$ can be viewed as a concatenation of $\Pi_{\pi,\lambda}^{(\xi)}$ identity matrices of order $\#\O_\lambda$. The same holds for the submatrices $\I^{\Pi\Lambda}_{\pi,\lambda_j}$.
\end{thm}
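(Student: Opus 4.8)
\section*{Proof proposal}

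The plan is to read off the four defining properties of a $(v_r,b_k)$ configuration directly from the regularity results already established in Lemma~\ref{lem4_line&nplane}, treating the $\pi$-planes of $\N_\pi$ as the ``points'' and the $\lambda$-lines of $\O_\lambda$ as the ``lines'' of the incidence structure carried by $\I^{\Pi\Lambda}_{\pi,\lambda}$. First I would fix the sizes: by construction $\I^{\Pi\Lambda}_{\pi,\lambda}$ is a $\#\O_\lambda\times\#\N_\pi$ matrix, so there are $v=\#\N_\pi$ points and $b=\#\O_\lambda$ lines, as claimed. The two regularity axioms are then immediate consequences of the single-orbit hypothesis: Lemma~\ref{lem4_line&nplane}(iii),(v) gives that every $\lambda$-line lies in exactly $\Pi_{\pi,\lambda}$ planes of $\N_\pi$, so each configuration-line contains exactly $k=\Pi_{\pi,\lambda}$ points; and Lemma~\ref{lem4_line&nplane}(i),(ii) gives that every $\pi$-plane contains exactly $\Lambda_{\lambda,\pi}$ lines of $\O_\lambda$, so each configuration-point lies on exactly $r=\Lambda_{\lambda,\pi}$ lines.

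The one axiom that is not merely a restatement of a counting lemma is that two distinct points be joined by at most one line. Here I would invoke the geometry of $\PG(3,q)$: two distinct planes intersect in exactly one line, so two distinct $\pi$-planes have at most one line in common and hence at most one common $\lambda$-line. This is the step that genuinely uses that we work in $\PG(3,q)$ rather than in an abstract incidence structure, and it is the natural candidate for the ``main point'' of this part, although it is very short. Collecting the four verified properties establishes that $\I^{\Pi\Lambda}_{\pi,\lambda}$ is a $(v_r,b_k)$ configuration with the asserted parameters.

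For the second assertion I would specialise to $\Lambda_{\lambda,\pi}=1$ and argue combinatorially about the $0$-$1$ matrix $\I^{\Pi\Lambda}_{\pi,\lambda}$, whose rows are indexed by $\O_\lambda$ and whose columns are indexed by $\N_\pi$. The hypothesis $\Lambda_{\lambda,\pi}=1$ means every column carries exactly one entry ``$1$,'' which defines a map $f$ sending each $\pi$-plane to the unique $\lambda$-line it contains. The fibre $f^{-1}(\ell)$ over a line $\ell$ has cardinality equal to the number of ones in the row of $\ell$, which by Lemma~\ref{lem4_line&nplane}(iii),(v) equals $\Pi_{\pi,\lambda}$ for every $\ell$; note that this forces $\#\N_\pi=\Pi_{\pi,\lambda}\cdot\#\O_\lambda$, in agreement with \eqref{eq4:Pi_aver}. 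I would then order each fibre as $f^{-1}(\ell)=\{c_{\ell,1},\dots,c_{\ell,\Pi_{\pi,\lambda}}\}$ and, for $t=1,\dots,\Pi_{\pi,\lambda}$, form the $t$-th block of columns $\{c_{\ell,t}:\ell\in\O_\lambda\}$. Each such block contains exactly one column per line $\ell$, hence is a permutation matrix of order $\#\O_\lambda$; permuting the columns inside each block (column rearrangement being allowed) turns every block into the identity, exhibiting $\I^{\Pi\Lambda}_{\pi,\lambda}$ as a horizontal concatenation of $\Pi_{\pi,\lambda}$ copies of the identity matrix of order $\#\O_\lambda$.

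Finally, since each orbit $\O_{\lambda_j}$ is itself a single orbit under $G_q$, every step above applies verbatim with $\O_\lambda$, $\Pi_{\pi,\lambda}$, $\Lambda_{\lambda,\pi}$ replaced by $\O_{\lambda_j}$, $\Pi_{\pi,\lambda_j}$, $\Lambda_{\lambda_j,\pi}$, yielding the same conclusions for $\I^{\Pi\Lambda}_{\pi,\lambda_j}$. I do not expect a serious obstacle: the statement is an assembly of Lemma~\ref{lem4_line&nplane} with the elementary fact that two planes meet in a single line, together with the observation that a $0$-$1$ matrix having unit column sums and uniform row sums decomposes into perfect matchings. The only place that needs care is ensuring all fibres $f^{-1}(\ell)$ have the same size $\Pi_{\pi,\lambda}$, which is precisely where the single-orbit hypothesis is essential.
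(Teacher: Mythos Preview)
Your proposal is correct and follows essentially the same approach as the paper: invoke Lemma~\ref{lem4_line&nplane} for row/column regularity, then use the fact that two planes of $\PG(3,q)$ meet in a single line to verify the ``at most one line through two points'' axiom. Your treatment of the $\Lambda_{\lambda,\pi}=1$ case via fibres of the column-to-row map is more explicit than the paper's, which simply records the row/column sums and leaves the concatenation as implicit, but the underlying idea is identical.
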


\begin{proof}
As the  class $\O_\lambda$ is an orbit, $\I^{\Pi\Lambda}_{\pi,\lambda}$ contains $\Pi_{\pi,\lambda}$ (resp. $\Lambda_{\lambda,\pi}$) ones in every row (resp. column), see Lemma~\ref{lem4_line&nplane}. In $\PG(3,q)$, two planes intersect along a line. Therefore, two points of  $\I^{\Pi\Lambda}_{\pi,\lambda}$ are connected by at most one line. If $\Lambda_{\lambda,\pi}^{(\xi)}=1$, $\I^{\Pi\Lambda}_{\pi,\lambda}$ contains $\Pi_{\pi,\lambda}^{(\xi)}$ (resp.\ 1) ones in every row (resp. column).
\end{proof}

\section{The numbers of $\pi$-planes through $\lambda_j$-lines and $\lambda_j$-lines in $\pi$-planes in the orbits forming classes $\O_\UG$, $\O_\UnG$, $\O_\EG$, and $\O_\EA$}\label{sec-split}

\begin{thm}\label{th7:Gamma}
Let $\lambda\in\{\UG,\EG\}$ if $q\not\equiv0\pmod3$; $\lambda\in\{\UG,\EA\}$ if $q\equiv0\pmod3$. Then,  independently of the number of orbits in the class $\O_\lambda$, we have exactly one $\Gamma$-plane through every $\lambda$-line. Moreover, up to rearrangement of rows and columns, the submatrices $\I^{\Pi\Lambda}_{\Gamma,\lambda}$ can be viewed as a vertical concatenation of\/ $\Lambda_{\lambda,\Gamma}^{(\xi)}$ identity matrices of order $\#\N_\Gamma=q+1$.
\end{thm}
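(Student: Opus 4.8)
The plan is to prove the two assertions separately: first the per-line count of $\Gamma$-planes through a $\lambda$-line, and then the resulting block structure of $\I^{\Pi\Lambda}_{\Gamma,\lambda}$. The decisive feature to keep track of throughout is that each count must be established for an \emph{individual} line, since only then does it survive any splitting of $\O_\lambda$ into several orbits and justify the phrase ``independently of the number of orbits''.

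For the count I would treat the three line types by the same dichotomy (lies in a $\Gamma$-plane, and cannot lie in two). For $\lambda=\UG$ the statement is already the content of Theorem~\ref{th6:Gamma-plane}(i): such a line lies in a $\Gamma$-plane by definition and cannot lie in two, because the intersection of two distinct $\Gamma$-planes is an axis, and by Theorem~\ref{th2_Hirs}(iv) axes and non-tangent unisecants are disjoint classes. For $\lambda=\EG$ with $q\not\equiv0\pmod3$ the equality $\Pi^{(\ne0)}_{\Gamma,\EG}=1$ is exactly the per-line assertion of Theorem~\ref{th6:Gamma-plane}(ii), proved by the same reasoning. Since these arguments refer to single lines, they hold for every line of the class whatever its orbit under $G_q$.

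The one case needing a fresh argument is $\lambda=\EA$, $q\equiv0\pmod3$, and I expect the ``at least one'' half here to be the only real obstacle: an $\EA$-line is merely required to be external and to meet the axis $\Ar$, and is not by definition contained in a $\Gamma$-plane. To close this gap I would invoke the pencil structure recorded in Section~\ref{sec_prelimin}: for $\xi=0$ the $q+1$ osculating planes are \emph{all} the planes through $\Ar$, because $\Gamma$ is a pencil with axis $\Ar$ and every line of $\PG(3,q)$ lies on exactly $q+1$ planes. Given an $\EA$-line $\ell$, it meets $\Ar$ in a point and is distinct from $\Ar$ (the classes $\O_\EA$ and $\O_\Ar$ differ, Theorem~\ref{th2:DMP_Orb}(i)); hence $\ell$ and $\Ar$ span a unique plane $\sigma=\langle\ell,\Ar\rangle$. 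Every $\Gamma$-plane contains $\Ar$, so a $\Gamma$-plane contains $\ell$ if and only if it contains $\sigma$, i.e.\ equals $\sigma$; and $\sigma$, containing $\Ar$, is itself a $\Gamma$-plane. Thus $\sigma$ is the unique $\Gamma$-plane through $\ell$. This is again a per-line statement, so it is unaffected by the splitting of $\O_\EA$ into three orbits in Theorem~\ref{th2:DMP_Orb}(v).

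For the matrix structure, the submatrix $\I^{\Pi\Lambda}_{\Gamma,\lambda}$ has its $\#\O_\lambda$ rows indexed by $\lambda$-lines and its $q+1$ columns indexed by $\Gamma$-planes. By the count just established every row carries a single entry $1$, while by Lemma~\ref{lem4_line&nplane}(ii) every column carries exactly $\Lambda_{\lambda,\Gamma}^{(\xi)}$ ones, the number of $\lambda$-lines in a $\Gamma$-plane being constant over $\N_\Gamma$; consistency with \eqref{eq4:Pi_aver} then gives $\#\O_\lambda=(q+1)\Lambda_{\lambda,\Gamma}^{(\xi)}$. Partitioning the rows into $q+1$ groups according to the column holding their unique $1$ yields groups of equal size $\Lambda_{\lambda,\Gamma}^{(\xi)}$. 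Reordering the rows into $\Lambda_{\lambda,\Gamma}^{(\xi)}$ blocks, each block drawing one row from every group and placing the row whose $1$ lies in the $i$-th column into its $i$-th position, turns each block into the identity matrix of order $q+1$. This exhibits $\I^{\Pi\Lambda}_{\Gamma,\lambda}$, up to a permutation of rows and columns, as the vertical concatenation of $\Lambda_{\lambda,\Gamma}^{(\xi)}$ copies of $I_{q+1}$, completing the proof; everything beyond the $\EA$-line argument is either quoted from earlier results or elementary bookkeeping.
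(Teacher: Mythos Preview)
Your proof is correct and follows the same underlying idea as the paper: each $\lambda$-line lies in at least one $\Gamma$-plane (by definition for $\UG$ and $\EG$, and via the pencil structure for $\EA$), and at most one because the intersection of two distinct $\Gamma$-planes is a real axis for $\xi\ne0$ or the unique axis $\Ar$ for $\xi=0$. The paper's proof compresses all three cases into two sentences and is slightly loose for $\lambda=\EA$ (where containment in a $\Gamma$-plane is not literally part of the definition); you have filled that gap correctly with the span argument $\sigma=\langle\ell,\Ar\rangle$, and you have also spelled out the bookkeeping behind the identity-block description of $\I^{\Pi\Lambda}_{\Gamma,\lambda}$, which the paper leaves implicit.
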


\begin{proof}
 By the definitions of the lines, one $\Gamma$-plane through a line always exists. If we have two $\Gamma$-planes through a line then it is an $\RA$-line.
\end{proof}

\begin{corollary}\label{cor7:Gamma}
We consider
  two $\frac{1}{2}(q^3-q)$-orbits of $\EG$-lines, for odd $q\not\equiv0\pmod3$, and three orbits $\O_{\EA_1}$, $\O_{\EA_2}$, and $\O_{\EA_3}$ of $\EA$-lines of sizes $q^3-q$, $\frac{1}{2}(q^2-1)$, and $\frac{1}{2}(q^2-1)$, respectively, for $q\equiv0\pmod3$. The following holds:
  \begin{align}\label{eq7:Gamma}
  & \Pi_{\Gamma,\EG_j}^{(\ne0)\mathrm{od}}=\Pi_{\Gamma,\EA_i}^{(0)\mathrm{od}}=1,~j=1,2,~i=1,2,3;\db\\
  &\Lambda_{\EG_j,\Gamma}^{(\ne0)\mathrm{od}}=\frac{1}{2}(q^2-q),~j=1,2;~\Lambda_{\EA_1,\Gamma}^{(0)\mathrm{od}}=q^2-q,~
  \Lambda_{\EA_i,\Gamma}^{(0)\mathrm{od}}=\frac{1}{2}(q-1),~i=2,3.\nt
  \end{align}
\end{corollary}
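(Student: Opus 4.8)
The plan is to obtain both assertions of \eqref{eq7:Gamma} by combining Theorem~\ref{th7:Gamma} with the double-counting identity of Lemma~\ref{lem4_line&nplane}. The key observation is that Theorem~\ref{th7:Gamma} is a statement about \emph{individual} lines: it says that every $\EG$-line (for $q\not\equiv0\pmod3$) and every $\EA$-line (for $q\equiv0\pmod3$) lies in exactly one $\Gamma$-plane, with no reference to the orbit decomposition. Hence the conclusion transfers verbatim to each sub-orbit, and the whole corollary reduces to arithmetic.

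First I would establish the top line of \eqref{eq7:Gamma}. Since each $\O_{\EG_j}$ (resp.\ $\O_{\EA_i}$) is a subset of the class $\O_\EG$ (resp.\ $\O_\EA$), every line of $\O_{\EG_j}$ (resp.\ $\O_{\EA_i}$) lies in exactly one $\Gamma$-plane by Theorem~\ref{th7:Gamma}. By Lemma~\ref{lem4_line&nplane}(iii) this common count is well defined for a single orbit, so $\Pi_{\Gamma,\EG_j}^{(\ne0)\mathrm{od}}=1$ for $j=1,2$ and $\Pi_{\Gamma,\EA_i}^{(0)\mathrm{od}}=1$ for $i=1,2,3$.

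Next I would recover the $\Lambda$-values from the exact counting relation. Applying \eqref{eq4:Pi_aver} to the single orbit $\O_{\lambda_j}$ (equivalently, using \eqref{eq4_obtainLamb1} with the class equal to the orbit) gives
\begin{align*}
\Lambda_{\lambda_j,\Gamma}=\frac{\Pi_{\Gamma,\lambda_j}\cdot\#\O_{\lambda_j}}{\#\N_\Gamma}=\frac{\#\O_{\lambda_j}}{q+1},
\end{align*}
since $\Pi_{\Gamma,\lambda_j}=1$ and $\#\N_\Gamma=q+1$. Substituting the orbit sizes from Theorem~\ref{th2:DMP_Orb}(iv),(v), namely $\#\O_{\EG_j}=\tfrac12(q^3-q)$, $\#\O_{\EA_1}=q^3-q$, and $\#\O_{\EA_2}=\#\O_{\EA_3}=\tfrac12(q^2-1)$, and cancelling the factor $q+1$ yields $\Lambda_{\EG_j,\Gamma}=\tfrac12(q^2-q)$, $\Lambda_{\EA_1,\Gamma}=q^2-q$, and $\Lambda_{\EA_2,\Gamma}=\Lambda_{\EA_3,\Gamma}=\tfrac12(q-1)$, as claimed.

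There is no serious obstacle here: once one notices that Theorem~\ref{th7:Gamma} holds line-by-line and therefore restricts to each sub-orbit, the corollary collapses to the single division $\#\O_{\lambda_j}/(q+1)$. The only point requiring a moment's care is the appeal to Lemma~\ref{lem4_line&nplane}(iii), which guarantees that $\Pi_{\Gamma,\lambda_j}$ is a genuine constant over the orbit (hence an exact integer); this is precisely what licenses the use of the \emph{exact}, rather than merely average, form of the double-counting identity on each individual orbit.
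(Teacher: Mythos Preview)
Your proposal is correct and follows essentially the same approach as the paper: the first row of \eqref{eq7:Gamma} is deduced from Theorem~\ref{th7:Gamma} (which applies line by line and hence to each sub-orbit), and the second row is obtained by the double-counting relation \eqref{eq4_obtainLamb1} together with the orbit sizes from Theorem~\ref{th2:DMP_Orb}(iv),(v). Your extra remark invoking Lemma~\ref{lem4_line&nplane}(iii) to justify constancy of $\Pi_{\Gamma,\lambda_j}$ on the orbit is a harmless elaboration of what the paper leaves implicit.
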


\begin{proof}
 We use  Theorem \ref{th2:DMP_Orb}(iv)(v). The 1-st row of \eqref{eq7:Gamma} follows from Theorem~\ref{th7:Gamma}. The values in the 2-nd row are obtained by \eqref{eq4_obtainLamb1}.
\end{proof}

\begin{thm}\label{th6:two_orb_UG}
Let $q$ be even. For the $(q+1)$-orbit $\O_{\UG_1}$ and the $(q^2-1)$-orbit $\O_{\UG_2}$ of the class $\O_4=\O_\UG$, the following holds, see Table \emph{\ref{tabUG}}:
\begin{align*}
 &\Pi_{\Gamma,\UG_1}^{(\ne0)\mathrm{ev}}=\Pi_{\Gamma,\UG_2}^{(\ne0)\mathrm{ev}}=\Lambda_{\UG_1,\Gamma}^{(\ne0)\mathrm{ev}}=\Lambda_{\UG_1,2_\C}^{(\ne0)\mathrm{ev}}
 =\Lambda_{\UG_2,\overline{1_\C}}^{(\ne0)\mathrm{ev}}=1;~\Pi_{2_\C,\UG_1}^{(\ne0)\mathrm{ev}}=q;\db\\
 &\Pi_{\pi,\UG_1}^{(\ne0)\mathrm{ev}}=\Lambda_{\UG_1,\pi}^{(\ne0)\mathrm{ev}}=0\t{ if }\pi\in\{3_\C,\overline{1_\C},0_\C\};
 ~\Pi_{3_\C,\UG_2}^{(\ne0)\mathrm{ev}}=\Pi_{\overline{1_\C},\UG_2}^{(\ne0)\mathrm{ev}}=\frac{1}{2}q;\db\\
 &\Pi_{\pi,\UG_2}^{(\ne0)\mathrm{ev}}=\Lambda_{\UG_2,\pi}^{(\ne0)\mathrm{ev}}=0\t{ if }\pi\in\{2_\C,0_\C\};~\Lambda_{\UG_2,\Gamma}^{(\ne0)\mathrm{ev}}=q-1;~\Lambda_{\UG_2,3_\C}^{(\ne0)\mathrm{ev}}=3.
\end{align*}
\end{thm}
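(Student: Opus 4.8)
The plan is to determine every incidence for the $(q+1)$-orbit $\O_{\UG_1}$ directly, exploiting the distinctive property of the complementary regulus, and then to recover all values for the $(q^2-1)$-orbit $\O_{\UG_2}$ by subtraction from the totals for the whole class $\O_\UG$ already established in Theorems \ref{th6:Gamma-plane}, \ref{th6:tang_unisec}, and \ref{th6:0C-UGamma}. For readability I would suppress the superscripts ``$(\ne0)\mathrm{ev}$'' throughout, restoring them only in the final tabulation.

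First I would fix a $\UG_1$-line $\ell$ with contact point $P\in\C$. Since $\O_{\UG_1}$ consists of the lines in the regulus complementary to that of the tangents (Theorem \ref{th2:DMP_Orb}(ii)), the characterization in Theorem \ref{th2_Hirs}(vi) applies: for even $q$, every plane through $\ell$ meets $\C$ in at most one point other than $P$. Hence no plane through $\ell$ can be a $3_\C$- or a $0_\C$-plane, giving $\Pi_{3_\C,\UG_1}=\Pi_{0_\C,\UG_1}=0$, and by Corollary \ref{cor4_=0} also $\Lambda_{\UG_1,3_\C}=\Lambda_{\UG_1,0_\C}=0$.

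The step carrying the real content is then the determination of $\Pi_{2_\C,\UG_1}$ and the exclusion of $\overline{1_\C}$-planes. For each of the $q$ points $Q\in\C\setminus\{P\}$, the unique plane through $\ell$ and $Q$ contains both $P$ and $Q$ and, by Theorem \ref{th2_Hirs}(vi), no further point of $\C$; thus it is a $2_\C$-plane. Distinct choices of $Q$ yield distinct planes, because a $2_\C$-plane through $\ell$ meets $\C$ in exactly $P$ and one other point, so the correspondence $Q\mapsto(\text{plane through }\ell\text{ and }Q)$ is injective; moreover none of these planes is the osculating plane at $P$, which meets $\C$ only at $P$. This produces exactly $q$ distinct $2_\C$-planes through $\ell$, so $\Pi_{2_\C,\UG_1}=q$. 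Since $\ell$ lies in precisely $q+1$ planes, only one remains; as $\Pi_{\Gamma,\UG_1}=1$ by Theorem \ref{th7:Gamma}, this last plane is the $\Gamma$-plane, forcing $\Pi_{\overline{1_\C},\UG_1}=0$ and, via Corollary \ref{cor4_=0}, $\Lambda_{\UG_1,\overline{1_\C}}=0$. The two surviving values $\Lambda_{\UG_1,\Gamma}=\Lambda_{\UG_1,2_\C}=1$ follow at once from \eqref{eq4_obtainLamb1} with $\#\O_{\UG_1}=q+1$.

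Finally I would obtain the $\UG_2$-row by difference. As $\O_\UG=\O_{\UG_1}\cup\O_{\UG_2}$ is a partition, \eqref{eq4_class_Lambda} gives $\Lambda_{\UG_2,\pi}=\Lambda_{\UG,\pi}-\Lambda_{\UG_1,\pi}$ for every $\pi$; substituting the totals $\Lambda_{\UG,\Gamma}=q$, $\Lambda_{\UG,2_\C}=\Lambda_{\UG,\overline{1_\C}}=1$, $\Lambda_{\UG,3_\C}=3$, $\Lambda_{\UG,0_\C}=0$ yields $\Lambda_{\UG_2,\Gamma}=q-1$, $\Lambda_{\UG_2,2_\C}=0$, $\Lambda_{\UG_2,3_\C}=3$, $\Lambda_{\UG_2,\overline{1_\C}}=1$, $\Lambda_{\UG_2,0_\C}=0$. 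Feeding these into \eqref{eq4_obtainPi1} with $\#\O_{\UG_2}=q^2-1$ returns the remaining $\Pi$-values, in particular $\Pi_{3_\C,\UG_2}=\Pi_{\overline{1_\C},\UG_2}=\tfrac12 q$, $\Pi_{2_\C,\UG_2}=\Pi_{0_\C,\UG_2}=0$, and $\Pi_{\Gamma,\UG_2}=1$ (consistent with Theorem \ref{th7:Gamma}). Everything except the $2_\C$-count and the $\overline{1_\C}$-exclusion is bookkeeping with the relations of Section \ref{sec:useful}. The same two facts could instead be checked by coordinates, taking $\ell_1=\overline{P_0\Pf(0,1,0,0)}$ with $P_0=\Pf(0,0,0,1)=P(0)$ and cutting the pencil $c_0x_0+c_2x_2=0$ with $\C$, where in characteristic $2$ the equation $t(t^2+c_2)=0$ acquires the extra simple root $t=\sqrt{c_2}$ whenever $c_2\ne0$; but the regulus argument is shorter and avoids a case split.
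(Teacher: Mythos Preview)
Your proof is correct and follows essentially the same route as the paper's: use Theorem~\ref{th2_Hirs}(vi) together with Theorem~\ref{th2:DMP_Orb}(ii) to see that the $q$ non-osculating planes through a $\UG_1$-line are all $2_\C$-planes, deduce the remaining $\UG_1$-entries from \eqref{eq4_planes_through_line_sum}, Corollary~\ref{cor4_=0}, and \eqref{eq4_obtainLamb1}, and then recover the $\UG_2$-row by subtraction via \eqref{eq4_class_Lambda} and \eqref{eq4_obtainPi1}. Your version is a bit more explicit in justifying why the $q$ planes through $\ell$ and the points of $\C\setminus\{P\}$ are distinct and exhaust the non-$\Gamma$ planes (hence ruling out $\overline{1_\C}$-planes), whereas the paper compresses this into a citation of Theorems~\ref{th2_Hirs}(vi) and~\ref{th2:DMP_Orb}(ii); the coordinate alternative you sketch at the end is not used in the paper.
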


\begin{proof}
 By Theorem \ref{th7:Gamma}, $\Pi_{\Gamma,\UG_j}^{(\ne0)\mathrm{ev}}=1$. Through a $\UG$-line, there are $q+1$ planes one of which is a $\Gamma$-plane. For a line of the $(q+1)$-orbit $\O_{\UG_1}$, the remaining $q$ planes are $2_\C$-planes, see Theorems~\ref{th2_Hirs}(vi) and \ref{th2:DMP_Orb}(ii), So, $\Pi_{2_\C,\UG_1}^{(\ne0)\mathrm{ev}}=q$. Now, by \eqref{eq4_planes_through_line_sum} and Corollary~\ref{cor4_=0}, we obtain $\Pi_{\pi,\UG_1}^{(\ne0)\mathrm{ev}}=\Lambda_{\UG_1,\pi}^{(\ne0)\mathrm{ev}}=0$, $\pi\in\{3_\C,\overline{1_\C},0_\C\}$. By \eqref{eq4_obtainLamb1}, we have $\Lambda_{\UG_1,\Gamma}^{(\ne0)\mathrm{ev}}=
\Lambda_{\UG_1,2_\C}^{(\ne0)\mathrm{ev}}=1$. Now, by \eqref{eq4_class_Lambda}, using $\Lambda_{\UG,\pi}$ and $\Lambda_{\UG_1,\pi}^{(\ne0)\mathrm{ev}}$, we obtain all $\Lambda_{\UG_2,\pi}^{(\ne0)\mathrm{ev}}$ and then, by \eqref{eq4_obtainPi1}, we calculate all $\Pi_{\pi,\UG_2}^{(\ne0)\mathrm{ev}}$.
\end{proof}

Remind that for $q\equiv-1\pmod4$ (resp. $q\equiv1\pmod4$), $-1$ is a non-square (resp. square) in $\F_q$. Also, for $q\equiv-1\pmod3$ (resp. $q\equiv1\pmod3$), $-3$ is a non-square (resp. square) in $\F_q$.

\begin{thm}\label{th6:two_orb_UnG}
Let $q$ be odd. For the $\frac{1}{2}(q^3-q)$-orbits $\O_{\UnG_1}$ and $\O_{\UnG_2}$ of the class $\O_5=\O_\UnG$, the following holds, see Table \emph{\ref{tabUG}}:
\begin{align*}
&\Pi_{\pi,\UnG_j}^{\mathrm{od}}=\Lambda_{\UnG_j,\pi}^{\mathrm{od}}=0,~\pi=\Gamma,0_\C,~ j=1,2;~\Pi_{2_\C,\UnG_1}^{\mathrm{od}}=1;~\Pi_{2_\C,\UnG_2}^{\mathrm{od}}=3;\db\\
&\Lambda_{\UnG_1,2_\C}^{\mathrm{od}}=\Pi_{3_\C,\UnG_1}^{\mathrm{od}}=\Pi_{\overline{1_\C},\UnG_2}^{\mathrm{od}}=\Lambda_{\UnG_2,\overline{1_\C}}^{\mathrm{od}}=
\frac{1}{2}(q-1);~\db\\
&\Pi_{3_\C,\UnG_2}^{\mathrm{od}}=\frac{1}{2}(q-3);~\Lambda_{\UnG_2,3_\C}^{\mathrm{od}}=\frac{3}{2}(q-3);
~\Lambda_{\UnG_2,2_\C}^{\mathrm{od}}=\Lambda_{\UnG_1,3_C}^{\mathrm{od}}=\frac{3}{2}(q-1);\db\\
& \Pi_{\overline{1_\C},\UnG_1}^{\mathrm{od}}=\Lambda_{\UnG_1,\overline{1_\C}}^{\mathrm{od}}=\frac{1}{2}(q+1).
\end{align*}
\end{thm}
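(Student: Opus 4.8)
The plan is to reduce the entire statement to a single per-orbit quantity, namely $\Pi_{2_\C,\UnG_j}$, compute that quantity by an explicit coordinate calculation on the representatives furnished by Theorem \ref{th2:DMP_Orb}(iii), and then propagate everything else through the counting identities of Section \ref{sec:useful}. The class-level data for $\UnG$-lines (from Theorems \ref{th6:Gamma-plane}(iii), \ref{th6:0C-UGamma}, \ref{th6:tang_unisec}, \ref{th6:LambdaPi_UnG}, collected in Table \ref{tab1}) will serve both as input and as a consistency check.

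First I would dispose of the trivial entries. A $\UnG$-line is a unisecant lying in no $\Gamma$-plane and meeting no $0_\C$-plane, so by Theorems \ref{th6:Gamma-plane}(iii) and \ref{th6:0C-UGamma} the whole class satisfies $\Lambda_{\UnG,\Gamma}=\Lambda_{\UnG,0_\C}=0$; since each orbit is contained in its class, $\Lambda_{\UnG_j,\Gamma}=\Lambda_{\UnG_j,0_\C}=0$ and, by Corollary \ref{cor4_=0}, $\Pi_{\Gamma,\UnG_j}=\Pi_{0_\C,\UnG_j}=0$ for $j=1,2$. Next I set up two per-orbit linear relations. Fix a $\UnG$-line $\ell$ with contact point $P$. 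Each of the $q+1$ planes through $\ell$ contains $P$, so none is a $0_\C$-plane, and none is a $\Gamma$-plane since $\ell$ lies in no $\Gamma$-plane; hence every such plane is a $2_\C$-, $3_\C$-, or $\overline{1_\C}$-plane. Partitioning the $q$ points of $\C\setminus\{P\}$ by the plane of the pencil that carries them, a $2_\C$-plane absorbs one such point and a $3_\C$-plane two, giving for each orbit
\begin{align*}
&\Pi_{2_\C,\UnG_j}+2\,\Pi_{3_\C,\UnG_j}=q,\\
&\Pi_{2_\C,\UnG_j}+\Pi_{3_\C,\UnG_j}+\Pi_{\overline{1_\C},\UnG_j}=q+1.
\end{align*}
Thus $\Pi_{2_\C,\UnG_j}$ alone determines $\Pi_{3_\C,\UnG_j}=\tfrac12(q-\Pi_{2_\C,\UnG_j})$ and then $\Pi_{\overline{1_\C},\UnG_j}$, after which each $\Lambda_{\UnG_j,\pi}$ follows from \eqref{eq4_obtainLamb1} with $\#\O_{\UnG_j}=\tfrac12(q^3-q)$.

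The heart of the proof is therefore the computation of $\Pi_{2_\C,\UnG_j}$ on the representatives $\ell_1=\overline{P_0\Pf(1,0,1,0)}$ and $\ell_2=\overline{P_0\Pf(1,0,\rho,0)}$, where $P_0=P(0)$. For $\ell_1$ the pencil of planes through it is $\{x_0-x_2+\mu x_1=0:\mu\in\F_q\}\cup\{x_1=0\}$; substituting $P(t)=(t^3,t^2,t,1)$ into $x_0-x_2+\mu x_1=0$ yields $t(t^2+\mu t-1)=0$, so the extra intersections with $\C$ are the nonzero roots of $t^2+\mu t-1$. A plane of this family is a $3_\C$-, $2_\C$-, or $\overline{1_\C}$-plane according as $\mu^2+4$ is a nonzero square, is zero, or is a non-square (in the middle case the double root $t=-\mu/2\neq0$ gives the two \emph{distinct} points $P_0$ and $P(-\mu/2)$, hence a $2_\C$-plane), while the exceptional plane $x_1=0$ passes through $P(\infty)$ and is again a $2_\C$-plane. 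Hence $\Pi_{2_\C}$ on the orbit of $\ell_1$ equals $1$ plus the number of solutions of $\mu^2=-4$, that is $1$ or $3$ according as $-1$ is a non-square or a square; the identical calculation for $\ell_2$ (with $-4$ replaced by $-4\rho$) produces the complementary value. Conceptually this is the projection of $\C$ from $P_0$ to a conic, under which the $\UnG$-lines become the internal points (giving $\Pi_{2_\C}=1$) and the external points off one tangent (giving $\Pi_{2_\C}=3$); either way the two orbits realise exactly the pair $\{1,3\}$, and with the labelling of Table \ref{tabUG} one records $\Pi_{2_\C,\UnG_1}=1$ and $\Pi_{2_\C,\UnG_2}=3$.

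Finally I would substitute these two values into the relations above to read off $\Pi_{3_\C,\UnG_1}=\Pi_{\overline{1_\C},\UnG_2}=\tfrac12(q-1)$, $\Pi_{\overline{1_\C},\UnG_1}=\tfrac12(q+1)$, and $\Pi_{3_\C,\UnG_2}=\tfrac12(q-3)$, convert each to its $\Lambda$-value through \eqref{eq4_obtainLamb1}, and verify via the averaging identity \eqref{eq4:Pi_aver2} that the two equal-sized orbits average back to the class values $\Pi_{2_\C,\UnG}=2$, $\Pi_{3_\C,\UnG}=\tfrac12(q-2)$, $\Pi_{\overline{1_\C},\UnG}=\tfrac12 q$ of Table \ref{tab1}, which is a complete cross-check. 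I expect the only real obstacle to be the representative computation itself: correctly classifying the tangency case (the double root $\mu^2+4=0$) as a $2_\C$-plane and carrying out the quadratic-residue bookkeeping, including the fact that which of $\ell_1,\ell_2$ lands in which orbit switches with $q\bmod 4$, so that the constant table values emerge only after fixing the orbit labelling by the value of $\Pi_{2_\C}$.
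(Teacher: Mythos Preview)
Your proposal is correct and follows essentially the same route as the paper: establish the trivial $\Gamma$- and $0_\C$-entries, reduce everything to $\Pi_{2_\C,\UnG_j}$ via the pencil relation $\Pi_{2_\C,\UnG_j}+2\Pi_{3_\C,\UnG_j}=q$ together with \eqref{eq4_planes_through_line_sum}, compute $\Pi_{2_\C,\UnG_j}$ on the explicit representative $\ell_1=\overline{P_0\Pf(1,0,1,0)}$ by counting roots of $t^2+\mu t-1$, and then propagate through \eqref{eq4_obtainLamb1}. The only minor difference is that the paper computes $\Pi_{2_\C}$ only for $\ell_1$, obtains $\Pi_{2_\C,\UnG_j}\in\{1,3\}$, and then invokes \eqref{eq4:Pi_aver2} with $\Pi_{2_\C,\UnG}=2$ to force the other orbit to take the complementary value (fixing the label $\Pi_{2_\C,\UnG_1}=1$ w.l.o.g.), whereas you compute both representatives directly; your conic-projection remark is a nice gloss the paper does not include.
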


\begin{proof}
 By the definition, $\Gamma$- and $0_\C$-planes do no contain $\UnG$-lines. So, $\Pi_{\Gamma,\UnG_j}^{\mathrm{od}}=\Pi_{0_\C,\UnG_j}^{\mathrm{od}}=0$, $j=1,2$.

 Now, see Theorem \ref{th2:DMP_Orb}(iii) and \cite[Theorem 6.13, Proof]{DMP_OrbLine}, we consider a plane $\pk=\boldsymbol{\pi}(c_0,c_1,c_2,c_3)$, $c_i\in\F_q$, through the line $\ell'=\overline{\Pf(0,0,0,1)\Pf(1,0,1,0)}$ of $\O_{\UnG_j}$, $j\in\{1,2\}$. We find the number $\Nb$ of points $P(t)$ in $\pk$ other than $P(0)$, see~\eqref{eq2:P(t)}. If and only if $\Nb=1$, $\pk$ is a $2_\C$-plane.
 By \eqref{eq2_plane}, $c_0+c_2=c_3=0$ whence $\pk=\boldsymbol{\pi}(c_0,c_1,-c_0,0)$.

 If $c_0=0$ then $\Nb=1$, $P(\infty)=\Pf(1,0,0,0)\in\pk$.

Let $c_0\ne0$. Then $P(\infty)\not\in\pk$. If $P(t)=\Pf(t^3,t^2,t,1)\in\pk$, $t\in\F^*_q$, then $c_0t^3+$ $c_1t^2-c_0t=0$ and $t^2+ct-1=0$, $c\in\F_q$, whence $t=-c/2\pm\sqrt{(c/2)^2+1}$. If $q\equiv-1\pmod4$, we have $\Nb\in\{0,2\}$ when $c$ runs over $\F_q$; if $q\equiv1\pmod4$, we have $\Nb=1$ exactly for two values of $c$ with $\sqrt{(c/2)^2+1}=0$.

So, when $c_0,c_1$ runs over $\F_q$, there are either one or three cases $\Nb=1$ that corresponds to $\Pi_{2_\C,\UnG_j}^{\mathrm{od}}\in\{1,3\}$.
By Theorem \ref{th6:tang_unisec}(i), $\Pi_{2_\C,\UnG}=2$, whence, by \eqref{eq4:Pi_aver2} and Theorem \ref{th2:DMP_Orb}(iii), we have
$\Pi_{2_\C,\UnG_1}^{\mathrm{od}}+\Pi_{2_\C,\UnG_2}^{\mathrm{od}}=4.$ Therefore, if $\Pi_{2_\C,\UnG_1}^{\mathrm{od}}=1$ then $\Pi_{2_\C,\UnG_2}^{\mathrm{od}}=3$ and vice versa. We put $\Pi_{2_\C,\UnG_1}^{\mathrm{od}}=1$ w.l.o.g.

Consider $q$ planes through a $\UnG$-line and a point of $\C$.  They are either $2_\C$- or $3_\C$-planes; in that, each $3_\C$-plane appears two times. So, $\Pi_{2_\C,\UnG_j}^{\mathrm{od}}+2\Pi_{3_\C,\UnG_j}^{\mathrm{od}}=q$ whence, by above, $\Pi_{3_\C,\UnG_1}^{\mathrm{od}}=(q-1)/2$. Also, by \eqref{eq4_obtainPi2}, $\Pi_{\overline{1_\C},\UnG_1}^{\mathrm{od}}=(q+1)/2$.
Now, by \eqref{eq4_obtainLamb1}, we obtain all $\Lambda_{\UnG_1,\pi}^{\mathrm{od}}$ from $\Pi_{\pi,\UnG_1}^{\mathrm{od}}$. Then by \eqref{eq4_class_Lambda} and
 \eqref{eq4_obtainPi1}, we calculate all $\Lambda_{\UnG_2,\pi}^{\mathrm{od}}$ and $\Pi_{\pi,\UnG_2}^{\mathrm{od}}$.
\end{proof}

\begin{lem}\label{lem6:Q_Q+1}
 Let $q\equiv\xi\pmod3$ be odd; let also $q\equiv\beta\pmod4$, $\xi,\beta\in\{1,-1\}$. Let $f(x)=-\frac{4}{3}x^2-3$.
 Let $V^{(\xi,\beta)}=\{c\in\F_q^* | f(c) \text{ is a non-square in }\F_q^*\}$, $R^{(\beta)}=\{c\in\F_q^* | f(c)=0\}$. Then $\#R^{(\beta)}=\beta+1$, $\#V^{(\xi,\beta)}=\frac{1}{2}(q+2\xi-2-\beta)$.
\end{lem}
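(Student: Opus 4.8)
The plan is to count both sets by means of the quadratic (Legendre) character $\eta$ of $\F_q$, normalised so that $\eta(0)=0$ and $\eta(x)=1$ (resp. $-1$) for a nonzero square (resp. non-square) $x$. The two ingredients I will use are the facts recalled just before the statement, namely $\eta(-1)=\beta$ and $\eta(-3)=\xi$, together with the standard evaluation of a quadratic character sum.

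First I would dispose of $R^{(\beta)}$. Since $\xi\neq0$ forces $3$ to be invertible and $q$ is odd, the equation $f(c)=0$ is equivalent to $c^{2}=-\tfrac{9}{4}=-(3/2)^{2}$. This has a nonzero solution in $\F_q$ exactly when $-1$ is a square, in which case there are two distinct ones. As $-1$ is a square iff $\beta=1$, we obtain $\#R^{(\beta)}=2$ for $\beta=1$ and $\#R^{(\beta)}=0$ for $\beta=-1$, i.e. $\#R^{(\beta)}=\beta+1$ in both cases.

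Next I would count $V^{(\xi,\beta)}$ by splitting $\F_q^{*}$ according to the value of $\eta(f(c))$. Write $Z=\#R^{(\beta)}=\beta+1$ for the number of $c\in\F_q^{*}$ with $f(c)=0$, and $S,N$ for the number of $c\in\F_q^{*}$ with $f(c)$ a nonzero square, resp. a non-square, so that $\#V^{(\xi,\beta)}=N$. Then $S+N=(q-1)-Z=q-2-\beta$ and $S-N=\sum_{c\in\F_q^{*}}\eta(f(c))$, whence $\#V^{(\xi,\beta)}=\tfrac12\big((q-2-\beta)-\sum_{c\in\F_q^{*}}\eta(f(c))\big)$. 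The crux is therefore the evaluation of this character sum. Viewing $f$ as the quadratic $-\tfrac43c^{2}-3$ in $c$, its discriminant is $-4\cdot(-\tfrac43)\cdot(-3)=-16\neq0$ (as $\mathrm{char}\,\F_q$ is odd), so the standard formula for $\sum_{c\in\F_q}\eta(a_2c^{2}+a_1c+a_0)$ in the nonzero-discriminant case, see \cite[Theorem~5.48]{Lidl_Nied}, gives $\sum_{c\in\F_q}\eta(f(c))=-\eta(-\tfrac43)$.

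Finally I would evaluate the characters. From $\eta(4)=1$ and $\eta(3)=\eta(-3)\eta(-1)=\xi\beta$ we get $\eta(-\tfrac43)=\eta(-1)\eta(3)=\beta\cdot\xi\beta=\xi$, hence $\sum_{c\in\F_q}\eta(f(c))=-\xi$. Removing the term at $c=0$, which is $\eta(f(0))=\eta(-3)=\xi$, yields $\sum_{c\in\F_q^{*}}\eta(f(c))=-\xi-\xi=-2\xi$. Substituting back gives $\#V^{(\xi,\beta)}=\tfrac12\big((q-2-\beta)+2\xi\big)=\tfrac12(q+2\xi-2-\beta)$, as claimed. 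I expect the only delicate point to be the bookkeeping of the Legendre-symbol identities (especially deriving $\eta(3)=\xi\beta$ from $\eta(-1)=\beta$ and $\eta(-3)=\xi$) and the correct reading of the discriminant in the character-sum formula; everything else is routine arithmetic in $\F_q$.
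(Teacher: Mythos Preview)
Your proof is correct and follows essentially the same route as the paper: both arguments compute $\#R^{(\beta)}$ from the roots $c=\pm\tfrac{3}{2}\sqrt{-1}$, invoke the Lidl--Niederreiter evaluation $\sum_{c\in\F_q}\eta(f(c))=-\eta(-\tfrac{4}{3})=-\xi$, and then separate out the contribution of $c=0$ (where $\eta(f(0))=\eta(-3)=\xi$). The only cosmetic difference is in the bookkeeping---the paper counts directly how many of the $q-\#R^{(\beta)}$ nonzero terms equal $\xi$ versus $-\xi$ from the parity of that count, whereas you solve the linear system $S+N=q-2-\beta$, $S-N=\sum_{c\in\F_q^{*}}\eta(f(c))$; your version is arguably cleaner. (The paper cites Theorem~5.18 rather than~5.48, reflecting the edition of Lidl--Niederreiter in its bibliography.)
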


\begin{proof}
The roots of $f(x)$ are $\pm\frac{3}{2}\sqrt{-1}$. This explains $\#R^{(\beta)}$.

Let $\eta$ be the quadratic character of $\F_q$. For $a\in\F_q^*$, $\eta(a)=1$ if $a$ is a square in $\F_q^*$ and $\eta(a)=-1$ otherwise. By \cite[Theorem 5.18]{Lidl_Nied}, $$
 \sum\limits_{c\in\F_q\setminus R^{(\beta)}}\eta(f(c))=-\eta\left(-\frac{4}{3}\right)=-\xi$$
 where by $c\in\F_q\setminus R^{(\beta)}$ we note that $\eta(0)$ is not defined. As the number $q-\#R^{(\beta)}$ of summands in $\sum_c$ is odd,  $(q-\#R^{(\beta)}+1)/2$ summands are equal to $-\xi$ while $(q-\#R^{(\beta)}-1)/2$ ones are $\xi$. Also, $\eta(f(0))=\eta(-3)=\xi$. Now $\#V^{(\xi,\beta)}$ can be obtained by straightforward calculation.
\end{proof}
\begin{thm}\label{th6:two_orb_EG}
Let $q\equiv\xi\pmod3$ be odd, $\xi\in\{1,-1\}$. For the $\frac{1}{2}(q^3-q)$-orbits $\O_{\EG_j}$, $j=1,2$, of the class $\O'_5=\O_\EG$, in addition to Corollary \emph{\ref{cor7:Gamma}} the following holds, see Table~\emph{\ref{tabUG}}:
\begin{align*}
 & \Pi_{2_\C,\EG_1}^{(\xi)\mathrm{od}}=\Lambda_{\EG_1,2_\C}^{(\xi)\mathrm{od}}=0; ~
 \Pi_{2_\C,\EG_2}^{(\xi)\mathrm{od}}=2,~\Lambda_{\EG_2,2_\C}^{(\xi)\mathrm{od}}=q-1;~\Pi_{3_\C,\EG_1}^{(\xi)\mathrm{od}}=\frac{1}{6}(q-\xi); \db\\
 &\Lambda_{\EG_1,3_\C}^{(\xi)\mathrm{od}}=\Lambda_{\EG_1,0_\C}^{(\xi)\mathrm{od}}=\Lambda_{\EG_2,0_\C}^{(\xi)\mathrm{od}}=\frac{1}{2}(q-\xi);~
 \Lambda_{\EG_1,0_\C}^{(\xi)\mathrm{od}}=\Lambda_{\EG_2,0_\C}^{(\xi)\mathrm{od}}=\frac{1}{3}(q-\xi);\db\\
 &\Pi_{\overline{1_\C},\EG_1}^{(\xi)\mathrm{od}}=\Lambda_{\EG_1,\overline{1_\C}}^{(\xi)\mathrm{od}}=\frac{1}{2}(q+\xi);~
\Pi_{\overline{1_\C},\EG_2}^{(\xi)\mathrm{od}}=
 \Lambda_{\EG_2,\overline{1_\C}}^{(\xi)\mathrm{od}}=\frac{1}{2}(q+\xi-2); \db\\
&\Pi_{3_\C,\EG_2}^{(\xi)\mathrm{od}}=\frac{1}{6}(q-\xi-6),~\Lambda_{\EG_2,3_\C}^{(\xi)\mathrm{od}}=\frac{1}{2}(q-\xi-6).
\end{align*}
\end{thm}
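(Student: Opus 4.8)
The plan is to read off the full incidence profile of each orbit $\O_{\EG_j}$ from the pencil of $q+1$ planes through a single representative line, and then to propagate the data through the linear identities of Section~\ref{sec:useful}. Corollary~\ref{cor7:Gamma} already fixes the $\Gamma$-column, $\Pi_{\Gamma,\EG_j}^{(\xi)\mathrm{od}}=1$ and $\Lambda_{\EG_j,\Gamma}^{(\xi)\mathrm{od}}=\frac12(q^2-q)$, so only the splitting of the four non-$\Gamma$ plane types between the two orbits remains.

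First I would take the representative $\ell=\pk_0\cap\pk_j$ of Theorem~\ref{th2:DMP_Orb}(iv), with $\pk_0=\pi_\t{osc}(0)=\boldsymbol{\pi}(1,0,0,0)$ and $\pk_j=\boldsymbol{\pi}(0,-3\sigma,0,-1)$, $\sigma\in\{1,\rho\}$, and describe the pencil of planes through $\ell$ as $\boldsymbol{\pi}(a,3\sigma b,0,b)$, $[a:b]\in\PG(1,q)$. Incidence of $P(t)$ with such a plane is $a t^{3}+3\sigma b t^{2}+b=0$: the member $[1:0]$ is the osculating plane $\pk_0$, the member $[0:1]$ is the plane $\pk_j$ (the only one containing $P(\infty)$), and for $[1:w]$ with $w\neq0$ the plane meets $\C$ away from $P(\infty)$ in the roots of $h_w(t)=t^{3}+3\sigma w t^{2}+w$. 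The plane type is then the root pattern of $h_w$: three distinct roots give a $3_\C$-plane, a repeated root a $2_\C$-plane, one root an $\overline{1_\C}$-plane, none a $0_\C$-plane. A direct computation gives the discriminant $-27w^{2}(4\sigma^{3}w^{2}+1)$, so for $w\neq0$ its quadratic character is $\xi\,\eta(4\sigma^{3}w^{2}+1)$ (using $\eta(-27)=\xi$).

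The two new inputs are supplied by Lemma~\ref{lem6:Q_Q+1}. The $2_\C$-planes are the zeros of the discriminant, i.e.\ the set $R^{(\beta)}$, giving $\beta+1\in\{0,2\}$ of them; since $-\frac{1}{4\sigma^{3}}$ is a nonzero square for exactly one of $\sigma\in\{1,\rho\}$, one orbit carries no $2_\C$-plane and the other exactly two, and I fix the labelling w.l.o.g.\ (as in the proof of Theorem~\ref{th6:two_orb_UnG}) so that $\Pi_{2_\C,\EG_1}=0$, $\Pi_{2_\C,\EG_2}=2$. The $\overline{1_\C}$-planes are those with non-square discriminant; after the substitution $w=c/3$ the quantity $4\sigma^{3}w^{2}+1$ agrees with $f(c)=-\frac43 c^{2}-3$ up to a square, so their number among $w\neq0$ equals $\#V^{(\xi,\beta)}=\frac12(q+2\xi-2-\beta)$. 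Adding the contribution of the exceptional plane $\pk_j$ — which is a $3_\C$- or an $\overline{1_\C}$-plane according as $-\frac{1}{3\sigma}$ is or is not a square — yields $\Pi_{\overline{1_\C},\EG_1}=\frac12(q+\xi)$ and $\Pi_{\overline{1_\C},\EG_2}=\frac12(q+\xi-2)$, the $\beta$-dependence cancelling exactly against the $2_\C$- and $\pk_j$-terms.

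With $\Pi_{\Gamma}$, $\Pi_{2_\C}$, $\Pi_{\overline{1_\C}}$ in hand for each orbit, the rest is forced. Because an $\EG$-line is external to $\C$, Theorem~\ref{th4_ext} holds per line, and \eqref{eq4_ext_line2} gives $\Pi_{3_\C,\EG_j}=\frac13\bigl(q-\Pi_{\overline{1_\C},\EG_j}-2\Pi_{2_\C,\EG_j}\bigr)$ while \eqref{eq4_ext_line3} gives $\Pi_{0_\C,\EG_j}=\Pi_{2_\C,\EG_j}+2\Pi_{3_\C,\EG_j}$; these reproduce $\frac16(q-\xi)$, $\frac16(q-\xi-6)$ and the $0_\C$-entries of the table, so that no separate sum is needed to tell $3_\C$-planes from $0_\C$-planes. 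All the $\Lambda_{\EG_j,\pi}$ then come from \eqref{eq4_obtainLamb1}, and as a check the two orbits must average, via \eqref{eq4:Pi_aver2}, to the class totals of Theorem~\ref{th6:EG_EnG}. I expect the main obstacle to be the third paragraph: pinning down the parameter $c$ and the polynomial $f$, discarding the exceptional planes $\pk_0,\pk_j$ as boundary cases, and confirming that the $q\bmod 4$ (that is, $\beta$) behaviour is absorbed into the choice of labels rather than surviving in the final counts.
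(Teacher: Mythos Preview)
Your approach is essentially the paper's: take a representative of $\O_{\EG_j}$ from Theorem~\ref{th2:DMP_Orb}(iv), parametrize the pencil of planes through it, classify each plane by the root pattern of the resulting cubic, and count via the discriminant and the character-sum Lemma~\ref{lem6:Q_Q+1}. The paper derives the pencil slightly differently (via the null-polarity matrix $\widehat{\Lambda}$ associated with $\overline{\ell}$) and arrives at the cubic $F(t)=t^{3}+ct^{2}+c/3$ with discriminant $c^{2}\bigl(-\tfrac{4}{3}c^{2}-3\bigr)$; under $c=3w$ this is your $h_w$ for $\sigma=1$, so this is only a cosmetic difference.

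There is one genuine slip in your third paragraph. Lemma~\ref{lem6:Q_Q+1} is stated for the specific quadratic $f(x)=-\tfrac{4}{3}x^{2}-3$, whose leading coefficient has character $\eta(-\tfrac{4}{3})=\xi$. Your substitution $w=c/3$ connects $4\sigma^{3}w^{2}+1$ to $f(c)$ only when $\sigma=1$; for $\sigma=\rho$ the leading coefficient $4\rho^{3}$ has the opposite character, and the number of $w\ne0$ with $4\rho^{3}w^{2}+1$ giving a non-square discriminant is $(q-2+\beta)/2$, not $\#V^{(\xi,\beta)}=(q+2\xi-2-\beta)/2$. If you used $\#V^{(\xi,\beta)}$ for both orbits as written, the two $\Pi_{\overline{1_\C}}$ values would differ only by the $\pk_j$-term and the $\beta$-dependence would \emph{not} cancel. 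The paper sidesteps this by computing only one orbit directly from Lemma~\ref{lem6:Q_Q+1} and then reading off the second orbit from the averaging identity \eqref{eq4:Pi_aver2} together with the class totals $\Pi_{\pi,\EG}^{(\xi)}$ of Theorem~\ref{th6:EG_EnG}; that is the cleanest fix here as well (alternatively, do the analogous character sum for $4\rho^{3}w^{2}+1$ separately).
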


\begin{proof}
\looseness -1  We use Theorem \ref{th2:DMP_Orb}(iii)(iv). The null polarity $\A$~\eqref{eq2_null_pol} maps the points $P_0=\Pf(0,0,0,1)$ and $P'=\Pf(1,0,1,0)$
of \cite[Theorem 6.13, Proof]{DMP_OrbLine} to the planes $\pk_0=\boldsymbol{\pi}(1,0,0,0)$ and $\pk'=\boldsymbol{\pi}(0,-3,0,-1)$, respectively. The $\UnG$-line $\ell'=\overline{P_0P'}$ is mapped to an $\EA$-line $\overline{\ell}$ so that $\ell'\A=\pk_0\cap\pk'\triangleq\overline{\ell}$. Let $\overline{\pi}=\boldsymbol{\pi}(c_0,c_1,c_2,c_3)$, $c_i\in\F_q$, be a plane through $\overline{\ell}$. By \cite[Section 15.2]{Hirs_PG3q}, the matrix associated with $\overline{\ell}$ is\\
%\begin{align*}
\centerline{$\widehat{\Lambda}= \left[
\renewcommand\arraystretch{0.85}
         \begin{array}{cccc}
           0 & 0 & 0 & 0 \\
           0 & 0 & 1 & 0 \\
           0 & -1 & 0 & 3 \\
           0 & 0 & -3 & 0 \\
         \end{array}
 \right] $}
%\end{align*}
and $\overline{\pi}\widehat{\Lambda}=0$. It gives $-c_2=c_1-3c_3=3c_2=0$ whence $\overline{\pi}=\boldsymbol{\pi}(c_0,c_1,0,c_1/3)$.

%In the pencil of planes $\overline{\pi}$ through $\overline{\ell}$, we consider points of $\C$ in each plane.

If $c_0\ne0,c_1=0$ then $\overline{\pi}=\boldsymbol{\pi}(1,0,0,0)=\pk_0=\pi_\t{osc}(0)$,  $P(0)\in\overline{\pi}=\pk_0$. Thus, $\overline{\pi}=\pk_0$ is a $\Gamma$-plane.

If $c_0=0$ then $c_1\ne0$, $\overline{\pi}=\boldsymbol{\pi}(0,1,0,1/3)=\pk'$, $P(\infty)=\Pf(1,0,0,0)\in\overline{\pi}=\pk'$, and $P(t)=\Pf(t^3,t^2,t,1)\in\overline{\pi}=\pk'$ if and only if $t=\pm\sqrt{-1/3}$.  So, $\overline{\pi}=\pk'$ is a $3_\C$-plane for $\xi=1$ and a $\overline{1_\C}$-plane for $\xi=-1$.

Let $c_0\ne0,c_1\ne0$. Then $\overline{\pi}=\boldsymbol{\pi}(1,c,0,c/3)\triangleq\overline{\pi}(c)$, $c\in\F_q^*$, $P(\infty)\notin\overline{\pi}(c)$, $P(t)=\Pf(t^3,t^2,t,1)\in\overline{\pi}(c)$ if and only if $t$ satisfies the equation $F(t)$, see \eqref{eq2_plane}, with the discriminant $\Delta(F)$  obtained by \cite[Lemma 1.18(ii)]{Hirs_PGFF}. We have
\begin{align*}%\label{eq6_cubic}
F(t)=t^3+ct^2+\frac{c}{3}=0,~t\in\F_q,~\Delta(F)=c^2\left(-\frac{4}{3}c^2-3\right),~c\in\F_q^*.
\end{align*}

Let $q\equiv\beta\pmod4$, $\beta\in\{1,-1\}$. Let $\Nb_j^{(\xi,\beta)}$ be the number of values of $c\in\F_q^*$ providing exactly $j$ roots of $F(t)$ in the corresponding $\F_q$.

\emph{The plane $\overline{\pi}(c)$ is a $0_\C$-, $\overline{1_\C}$-, $2_\C$-, and $3_\C$-plane according as $F(t)$ has \emph{0, 1, 2,} and \emph{3} roots in $\F_q$.}

By above, $\Pi_{\Gamma,\EG_j}^{(\xi)\mathrm{od}}=1,~ \Pi_{\overline{1_\C},\EG_j}^{(\xi)\mathrm{od}}=\Nb_1^{(\xi,\beta)}+(1-\xi)/2,~\Pi_{2_\C,\EG_j}^{(\xi)\mathrm{od}}=\Nb_2^{(\xi,\beta)}$, where we take into account the planes $\overline{\pi}=\pk_0$, $\overline{\pi}=\pk'$, and $\overline{\pi}(c)$.

By \cite[Corollary 1.30]{Hirs_PGFF}, where all  $A_i\ne0$ in our case, $F(t)$  has exactly two roots if and only if  $\Delta(F)=0.$ Also, by \cite[Corollary 1.15(ii)]{Hirs_PGFF}, if $\Delta(F)\ne0$ then $F(t)$  has  exactly one root if and only if  $\Delta(F)$ is a non-square in $\F_q$.

We put $j=1$, use  Lemma \ref{lem6:Q_Q+1}, and obtain $\Pi_{\overline{1_\C},\EG_1}^{(\xi)\mathrm{od}}=\#V^{(\xi,\beta)}+(1-\xi)/2=(q+\xi-1-\beta)/2$, $\Pi_{2_\C,\EG_1}^{(\xi)\mathrm{od}}=\#R^{(\beta)}=\beta+1,$
whence, by \eqref{eq4_ext_line2}, \eqref{eq4_ext_line3}, with $\Pi_{\Gamma,\EG_j}^{(\xi)\mathrm{od}}=1$, we have $\Pi_{3_\C,\EG_1}^{(\xi)\mathrm{od}}=(q-\xi-3-3\beta)/6$, $\Pi_{0_\C,\EG_1}^{(\xi)\mathrm{od}}=(q-\xi)/3$. Then, using \eqref{eq4:Pi_aver2} with $\Pi_{\pi,\EG}^{(\xi)\mathrm{od}}$ obtained above, see Table \ref{tab1}, we
obtain $\Pi_{2_\C,\EG_2}^{(\xi)\mathrm{od}}=1-\beta$, $\Pi_{\overline{1_\C},\EG_2}^{(\xi)\mathrm{od}}=(q+\xi-1+\beta)/2$, $\Pi_{3_\C,\EG_2}^{(\xi)\mathrm{od}}=(q-\xi-3+3\beta)/6$, $\Pi_{0_\C,\EG_2}^{(\xi)\mathrm{od}}=(q-\xi)/3$.

For $\beta=-1$, the formulae above give the values $\Pi_{\pi,\EG_j}^{(\xi)\mathrm{od}}$, $j=1,2$, as in Table~\ref{tabUG}. Moreover, $\beta=1$ provides the same values but the numbers $j$ of orbits $\O_{\EG_j}$  change places, i.e. we have $j=2$ instead of $j=1$ and  vice versa. Therefore, $\beta$ does not appear in the final formulae.

In conclusion, by \eqref{eq4_obtainLamb1}, from $\Pi_{\pi,\EG_j}^{(\xi)\mathrm{od}}$ we obtain $\Lambda_{\EG_j,\pi}^{(\xi)\mathrm{od}}$.
\end{proof}

\begin{thm}\label{th7:PiLambEA}
Let $q\equiv0\pmod3$, $q\ge9$. For the orbits $\O_{\EA_1}$, $\O_{\EA_2}$, and $\O_{\EA_3}$ of $\EA$-lines \emph{(}class $\O_8=\O_\EA$\emph{)} of sizes $q^3-q$, $\frac{1}{2}(q^2-1)$, and $\frac{1}{2}(q^2-1)$, respectively, in addition to Corollary \emph{\ref{cor7:Gamma}} the following holds, see Table \emph{\ref{tabUG}}:
\begin{align*}
 &\Pi_{2_\C,\EA_1}^{(0)\mathrm{od}}=\Lambda_{\EA_2,3_\C}^{(0)\mathrm{od}}=
 \Lambda_{\EA_2,0_\C}^{(0)\mathrm{od}}=\Lambda_{\EA_3,\overline{1_\C}}^{(0)\mathrm{od}}=1;~
 \Lambda_{\EA_1,2_\C}^{(0)\mathrm{od}}=\Lambda_{\EA_1,\overline{1_\C}}^{(0)\mathrm{od}}=q-1;\db\\
 & \Pi_{\pi,\EA_j}^{(0)\mathrm{od}}=\Lambda_{\EA_j,\pi}^{(0)\mathrm{od}}=0,~j=2\t{ with }\pi=2_\C,\overline{1_\C},~j=3\t{ with }\pi=2_\C,3_\C,0_\C;\db\\
 &\Pi_{\overline{1_\C},\EA_3}^{(0)\mathrm{od}}=\Lambda_{\EA_1,0_\C}^{(0)\mathrm{od}}=q,~
 \Pi_{3_\C,\EA_2}^{(0)\mathrm{od}}=\Pi_{0_\C,\EA_1}^{(0)\mathrm{od}}=\frac{1}{3}q,~
 \Pi_{0_\C,\EA_2}^{(0)\mathrm{od}}=\frac{2}{3}q;\db\\
 &\Pi_{3_\C,\EA_1}^{(0)\mathrm{od}}=\frac{1}{6}(q-3),~\Lambda_{\EA_1,3_\C}^{(0)\mathrm{od}}=q-3;~
 \Pi_{\overline{1_\C},\EA_1}^{(0)\mathrm{od}}=\frac{1}{2}(q-1).
 \end{align*}
\end{thm}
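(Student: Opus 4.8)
The plan is to treat the three orbits $\O_{\EA_1},\O_{\EA_2},\O_{\EA_3}$ separately through the representative lines $\ell_1=\overline{P_0^\Ar\Pf(0,0,1,1)}$, $\ell_2=\overline{P_0^\Ar\Pf(1,0,1,0)}$, $\ell_3=\overline{P_0^\Ar\Pf(1,0,\rho,0)}$ of Theorem~\ref{th2:DMP_Orb}(v), where $P_0^\Ar=\Pf(0,1,0,0)$ lies on the axis. For each $\ell_j$ I would write a general plane $\boldsymbol{\pi}(c_0,c_1,c_2,c_3)$ through $\ell_j$; passing through the two defining points imposes two linear conditions on the $c_i$ and leaves a pencil. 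Since an $\EA$-line is external, the type of such a plane ($0_\C$, $\overline{1_\C}$, $2_\C$, or $3_\C$) is determined by how many points $P(t)\in\C$ it contains, i.e.\ by the number of roots in $\F_q$ of the cubic obtained by substituting \eqref{eq2:P(t)} into \eqref{eq2_plane}. The values $\Pi_{\Gamma,\EA_j}^{(0)}=1$ and the numbers $\Lambda_{\EA_j,\Gamma}^{(0)}$ are already supplied by Corollary~\ref{cor7:Gamma}, so only the four remaining plane types need to be counted.

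The decisive simplification is that $q\equiv0\pmod3$, so the cubics arising for $\ell_2,\ell_3$ are $\F_3$-linearized. For $\ell_2$ the conditions $c_1=0$, $c_0+c_2=0$ give planes $\boldsymbol{\pi}(c_0,0,-c_0,c_3)$; the unique $\Gamma$-plane is $c_0=0$, and for $c_0=1$ the membership condition becomes $t^3-t=-c_3$. The additive map $t\mapsto t^3-t$ has kernel $\F_3$ (three elements), hence is exactly $3$-to-$1$ onto an image of size $q/3$; thus $q/3$ values of $c_3$ give three points of $\C$ and $2q/3$ give none, while no value gives one or two. This yields $\Pi_{3_\C,\EA_2}^{(0)}=\tfrac13q$, $\Pi_{0_\C,\EA_2}^{(0)}=\tfrac23q$, and $\Pi_{2_\C,\EA_2}^{(0)}=\Pi_{\overline{1_\C},\EA_2}^{(0)}=0$. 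For $\ell_3$ the analogous pencil is $\boldsymbol{\pi}(-\rho c_2,0,c_2,c_3)$, and for $c_2=1$ the condition is $\rho t^3-t=c_3$; here $t\mapsto\rho t^3-t$ is additive with kernel $\{t:\rho t^3=t\}=\{0\}$, since $\rho t^2=1$ is unsolvable as $\rho$ is a non-square, so the map is a bijection and every $c_3$ yields exactly one point of $\C$. Hence all $q$ non-$\Gamma$ planes through $\ell_3$ are $\overline{1_\C}$-planes, giving $\Pi_{\overline{1_\C},\EA_3}^{(0)}=q$ and the other three counts $0$. In each case I would verify the planes are genuine and not secretly $\Gamma$-planes, which is immediate from $c_2\neq0$.

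For $\ell_1$ the conditions $c_1=0$, $c_3=-c_2$ give $\boldsymbol{\pi}(c_0,0,c_2,-c_2)$; the choice $c_2=0$ recovers the $\Gamma$-plane $\pi_\t{osc}(0)$, the choice $c_0=0$ gives the single plane $\boldsymbol{\pi}(0,0,1,-1)$ through $P(1)$ and $P(\infty)$, which is a $2_\C$-plane, and for $c_0=c_2=1$ the cubic $F(t)=t^3+c_2t-c_2$ satisfies $F'=c_2$ in characteristic $3$, hence is separable; a separable cubic over $\F_q$ has $0$, $1$, or $3$ roots but never exactly $2$. Therefore $\Pi_{2_\C,\EA_1}^{(0)}=1$, and rather than evaluate the associated character sum I would recover the remaining three counts from \eqref{eq4:Pi_aver2}, solving for $\Pi_{\pi,\EA_1}^{(0)}$ using the class averages $\Pi_{\pi,\EA}^{(0)}$ of Table~\ref{tab1} (established in Theorems~\ref{th7:A-line}, \ref{th7:2C}, \ref{th7:EA_EnG}), the just-computed values for $\O_{\EA_2},\O_{\EA_3}$, and the orbit sizes $\#\O_{\EA_1}=q^3-q$, $\#\O_{\EA_2}=\#\O_{\EA_3}=\tfrac12(q^2-1)$. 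This gives $\Pi_{3_\C,\EA_1}^{(0)}=\tfrac16(q-3)$, $\Pi_{\overline{1_\C},\EA_1}^{(0)}=\tfrac12(q-1)$, $\Pi_{0_\C,\EA_1}^{(0)}=\tfrac13q$, which I would cross-check against \eqref{eq4_ext_line2}, \eqref{eq4_ext_line3} and \eqref{eq4_planes_through_line_sum}. Finally, all the stated $\Lambda_{\EA_j,\pi}^{(0)}$ follow from the $\Pi$-values by \eqref{eq4_obtainLamb1}.

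The main obstacle is the orbit-by-orbit root count for $\ell_2$ and $\ell_3$: the whole argument hinges on recognizing that in characteristic $3$ the relevant cubics are additive polynomials and on correctly computing their kernels (size $3$ for $t^3-t$, trivial for $\rho t^3-t$ via the non-squareness of $\rho$). Once this structural fact is secured, the remaining bookkeeping — including deciding which size-$\tfrac12(q^2-1)$ orbit is $\O_{\EA_2}$ and which is $\O_{\EA_3}$ — is routine, and $\O_{\EA_1}$ costs no extra root counting because its counts are forced by the averaging relation \eqref{eq4:Pi_aver2}.
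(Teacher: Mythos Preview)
Your proof is correct and takes a genuinely different route from the paper's. The paper never touches the representative lines $\ell_1,\ell_2,\ell_3$; instead it sets $x_i=\Pi_{2_\C,\EA_i}$, $y_i=\Pi_{0_\C,\EA_i}$, writes down the averaging relations \eqref{eq4:Pi_aver2} for $\pi=2_\C$ and $\pi=0_\C$, and solves these Diophantine constraints by elimination: $qx_1+\tfrac12x_2+\tfrac12x_3=q$ has only two nonnegative integer solutions, and one is ruled out because it forces $y_1=(q+1)/3\notin\mathbb{Z}$. The remaining values then drop out of \eqref{eq4_ext_line2}, \eqref{eq4_ext_line3}. By contrast, you parametrize each pencil explicitly and exploit the key structural observation (absent from the paper) that in characteristic~$3$ the cubics for $\ell_2$ and $\ell_3$ are additive polynomials, so their root counts are governed by the sizes of the kernels of $t\mapsto t^3-t$ and $t\mapsto\rho t^3-t$. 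This is more illuminating: it explains \emph{why} $\O_{\EA_2}$ sees only $3_\C$- and $0_\C$-planes (the map is $3$-to-$1$) and $\O_{\EA_3}$ sees only $\overline{1_\C}$-planes (the map is a bijection), and it pins down which of the two $\tfrac12(q^2-1)$-orbits is $\O_{\EA_2}$ and which is $\O_{\EA_3}$ directly from the representatives in Theorem~\ref{th2:DMP_Orb}(v), whereas the paper has to make this assignment ``w.l.o.g.''. One small wording slip: where you write ``for $c_0=c_2=1$ the cubic $F(t)=t^3+c_2t-c_2$'', you mean normalizing $c_0=1$ and letting $c_2$ range over $\F_q^*$; the formula and the separability argument are correct as written.
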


\begin{proof}
We denote $x_i=\Pi_{2_\C,\EA_i}^{(0)\mathrm{od}}$, $\overline{x}_i=\Lambda_{\EA_i ,2_\C}^{(0)\mathrm{od}}$, $y_i=\Pi_{0_\C,\EA_i}^{(0)\mathrm{od}}$, $\overline{y}_i=\Lambda_{\EA_i ,0_\C}^{(0)\mathrm{od}}$, $i=1,2,3$. Obviously, all the values must be integer.
By Theorems \ref{th7:2C}, \ref{th7:EA_EnG}, $\Pi_{2_\C,\EA}^{(0)}=q/(q+1)$, $\Pi_{0_\C,\EA}^{(0)}=q/3$, whence, by \eqref{eq4:Pi_aver2}, \eqref{eq4_planes_through_line_sum}, \eqref{eq7:Gamma}, we have
\begin{align}\label{eq7:EA2C}
  &qx_1+\frac{1}{2}x_2+\frac{1}{2}x_3=q,~x_i\in\{0,1,\ldots,q\}; \\
   &6qy_1+3y_2+3y_3=2q^2+2q,~y_i\in\{0,1,\ldots,q-x_i\}.\label{eq7:EA0C}
\end{align}

For \eqref{eq7:EA2C}, there are only two solutions $x_1=0,x_2=x_3=q$ and $x_1=1$, $x_2=x_3=0$. Taking into account \eqref{eq4_planes_through_line_sum}, \eqref{eq7:Gamma}, the 1-st solution implies $y_2=y_3=0$, $y_1=(q+1)/3$, contradiction as $y_1$ must be integer. So, $x_1=1,x_2=x_3=0$.

It is easy to see that $y_1=q/3$ is the only possibility to provide $2q^2$ in~\eqref{eq7:EA0C}. Then, by \eqref{eq4_obtainLamb1}, $\overline{y}_1=q$, and by \eqref{eq4_class_Lambda}, $\overline{y}_2+\overline{y}_3=\Lambda_{\EA  ,0_\C}^{(0)\mathrm{od}}-\overline{y}_1=1$, see  Theorem~\ref{th7:EA_EnG}. We put $\overline{y}_2=1$, $\overline{y}_3=0$, w.l.o.g., whence, by \eqref{eq4_obtainPi1}, $y_2=2q/3$, $y_3=0$.

Now, by \eqref{eq4_ext_line3}, we obtain $\Pi_{3_\C,\EA_i}^{(0)\mathrm{od}}=(y_i-x_i)/2$, $i=1,2,3$, and then, by~\eqref{eq4_ext_line2}, we calculate $\Pi_{\overline{1_\C},\EA_i}^{(0)\mathrm{od}}=q+1-\Pi_{\Gamma,\EA_i}^{(0)\mathrm{od}}-2x_i-3\Pi_{3_\C,\EA_i}^{(0)\mathrm{od}}$, $i=1,2,3$. In conclusion, by \eqref{eq4_obtainLamb1}, we obtain the remaining values of $\Lambda_{\EA_i ,\pi}^{(0)\mathrm{od}}$ from $\Pi_{\pi,\EA_i}^{(0)\mathrm{od}}$.
\end{proof}

\section*{Acknowledgments}
 The research of S. Marcugini, and F.~Pambianco was supported in part by the Italian National Group for Algebraic and Geometric Structures and their Applications (GNSAGA - INDAM) and by University of Perugia (Project: Curve, codici e configurazioni di punti, Base Research Fund 2018).


\begin{thebibliography}{99}

\bibitem{BallLavrauw} S. Ball, M. Lavrauw, Arcs in finite projective spaces, EMS Surv. Math. Sci. 6(1/2) (2019) 133--172,
\url{https://dx.doi.org/10.4171/EMSS/33}.

\bibitem{BargZem}
A.~Barg, G.~Zemor, Distances properties of expander codes, IEEE Trans. Inf.
  Theory 52(1) (2006) 78--90,
\url{https://dx.doi.org/10.1109/TIT.2005.860415}.

\bibitem{BDMP-TwCub} D. Bartoli,  A.A. Davydov, S. Marcugini, F. Pambianco,
On planes through points off the twisted cubic in PG(3,q) and multiple covering codes, Finite Fields Appl. 67,  Oct. 2020, paper 101710,
\url{https://doi.org/10.1016/j.ffa.2020.101710}.

\bibitem{BonPolvTwCub}G. Bonoli, O. Polverino, The twisted cubic in $\PG(3, q)$ and translation
spreads in $H(q)$, Discrete Math. 296 (2005) 129--142,\\
 \url{https://doi.org/10.1016/j.disc.2005.03.010}.

 \bibitem{Magma} W. Bosma, J. Cannon, C. Playoust, The Magma Algebra System. I. The User Language, J. Symbolic Comput. 24 (1997)  235--265,\\
  \url{https://doi.org/10.1006/jsco.1996.0125}.

\bibitem{BrHirsTwCub} A.A. Bruen, J.W.P. Hirschfeld, Applications of line geometry over finite fields I: The twisted cubic, Geom. Dedicata 6 (1977) 495--509,\\
\url{https://doi.org/10.1007/BF00147786}.

\bibitem{CLPolvT_Spr} I. Cardinali, G. Lunardon, O. Polverino,  R. Trombetti, Spreads in $H(q)$ and 1-systems of $Q(6,q)$,
European J. Combin. 23 (2002) 367--376,\\
 \url{https://dx.doi.org/10.1006/eujc.2001.0578}.

\bibitem{CasseGlynn82} L.R.A. Casse, D.G. Glynn, The solution to Beniamino Segre's problem $I_{r,q}$, $r = 3$, $q = 2^h$, Geom.
Dedicata 13 (1982) 157--163,\\ \url{https://doi.org/10.1007/BF00147659}.

\bibitem{CasseGlynn84}L.R.A. Casse, D.G. Glynn, On the uniqueness of $(q + 1)_{4}$-arcs of $\PG(4,q)$, $ q= 2^h$, $h\ge 3$,
Discrete Math. \textbf{48}(2-3) (1984) 173--186,\\
\url{https://doi.org/10.1016/0012-365X(84)90180-8}.

\bibitem{CosHirsStTwCub}  A. Cossidente, J.W.P. Hirschfeld, L. Storme,  Applications of line geometry, III:
The quadric Veronesean and the chords of a twisted cubic, Austral. J. Combin. 16 (1997) 99--111,
\url{https://ajc.maths.uq.edu.au/pdf/16/ocr-ajc-v16-p99.pdf}.

\bibitem{DFGMP_SymConf} A.A. Davydov, G. Faina, M. Giulietti, S. Marcugini, F. Pambianco,
On constructions and parameters of symmetric configurations $v_k$, Des. Codes Cryptogr. 80(1) (2016) 125--147,
\url{https://dx.doi.org/10.1007/s10623-015-0070-x}.

\bibitem{DGMP_BipGraph}
A.A. Davydov, M.~Giulietti, S.~Marcugini, F.~Pambianco, Some combinatorial
  aspects of constructing bipartite-graph codes, Graphs Combin. 29(2) (2013)
  187--212,
\url{https://dx.doi.org/10.1007/s00373-011-1103-5}.

\bibitem{DMP_OrbLine} A.A. Davydov, S. Marcugini, F. Pambianco, Twisted cubic and orbits of lines in $\mathrm{PG}(3,q)$,
arXiv:2103.12655 [math.CO] (2021),
\url{https://arxiv.org/abs/2103.12655}.

\bibitem{GiulVincTwCub} M. Giulietti, R. Vincenti, Three-level secret sharing schemes from the twisted cubic,
Discrete Math. 310 (2010) 3236--3240,\\
 \url{https://dx.doi.org/10.1016/j.disc.2009.11.040}.

\bibitem{GroppConfig}  H. Gropp, Configurations, in: C.J. Colbourn, J.H. Dinitz(Eds.), Handbook of Combinatorial Designs, 2nd  edition,  Chapman and Hall/CRC, New
  York, 2006, Chapter VI.7, pp. 353--355.

%\bibitem{Herstein} I.N. Herstein, Topics in Algebra, second ed., Wiley, 1975.

\bibitem{Hirs_PGFF} J.W.P. Hirschfeld, Projective Geometries over Finite Fields, 2nd edition, Oxford University
Press, Oxford, 1999.

\bibitem{Hirs_PG3q} J.W.P. Hirschfeld, Finite Projective Spaces of Three Dimensions, Oxford University Press, Oxford, 1985.

\bibitem{HirsStor-2001} J.W.P. Hirschfeld, L. Storme,    The
    packing problem in statistics, coding Theory and finite projective spaces: Update 2001, in: A. Blokhuis, J.W.P. Hirschfeld, D. Jungnickel, J.A. Thas (Eds.), Finite
    Geometries (Proc. 4th Isle of Thorns Conf., July 16-21, 2000),
    Dev. Math., vol. 3, Dordrecht: Kluwer, 2001, pp. 201--246,\\ \url{https://dx.doi.org/10.1007/978-1-4613-0283-4_13}.

\bibitem{HirsThas-2015}J.W.P. Hirschfeld, J.A. Thas,
 Open  problems in finite projective spaces,   Finite Fields
    Appl. 32 (2015) 44--81, \url{https://dx.doi.org/10.1016/j.ffa.2014.10.006}.

    \bibitem{HohJust}
T.~H{\o}holdt, J.~Justesen, Graph codes with {Reed--Solomon} component codes,
  in: Proc. Int. Symp. Inf. Theory 2006, ISIT 2006, IEEE, Seattle, WA, USA,
  2006, pp. 2022--2026,
\url{https://dx.doi.org/10.1109/ISIT.2006.261904}.

\bibitem{Lidl_Nied} R.Lidl, H. Niederreiter, Introduction to Finite Fields and their Applications, 2nd edition, Cambridge University Press, 1994.

\bibitem{LunarPolv}  G. Lunardon, O. Polverino,  On the Twisted Cubic of $\PG(3, q)$, J. Algebr. Combin. 18 (2003)  255--262,
\url{https://dx.doi.org/10.1023/B:JACO.0000011940.77655.b4}.

\bibitem{HandbCombDes2v_k_lamb}
R.~Mathon, A.~Rosa, $2$-$(v,k,\lambda)$ {D}esigns of {S}mall {O}rder, in: C.~J.
  Colbourn, J.~H. Dinitz (Eds.), Handbook of Combinatorial Designs, 2nd
  edition, Chapman and Hall/CRC, New
  York, 2006, pp. 25--58.

\bibitem{ZanZuan2010} M. Zannetti, F. Zuanni, Note on three-character $(q + 1)$-sets in $\PG(3, q)$,  Austral. J. Combin. 47 (2010) 37--40,
\url{https://ajc.maths.uq.edu.au/pdf/47/ajc_v47_p037.pdf}.
\end{thebibliography}
\end{document}